\newenvironment{enumeratei}{\begin{enumerate}[\upshape (i)]}{\end{enumerate}}
\definecolor{MyDarkblue}{rgb}{0,0.08,0.50}
\definecolor{Brickred}{rgb}{0.65,0.08,0}
\newtheorem{theorem}{Theorem}[section]
\newtheorem{lemma}[theorem]{Lemma}
\newtheorem{proposition}[theorem]{Proposition}
\newtheorem{corollary}[theorem]{Corollary}
\newtheorem{definition}[theorem]{Definition}
\newtheorem{remark}[theorem]{Remark}
\newtheorem{claim}[theorem]{Claim}
\newcommand{\Pv}{\mathbb{P}}
\newcommand{\Ev}{\mathbb{E}}
\newcommand{\set}[1]{\left\{#1\right\}}
\newcommand{\sss}{\scriptscriptstyle}
\newcommand{\CE}{{\mathcal{E}}}
\newcommand*{\CMD}{{\mathrm{CM}}_n(\boldsymbol{d})}
\newcommand{\e}{{\mathrm e}}
\numberwithin{equation}{section}
\newcommand{\R}{\mathbb{R}}
\newcommand{\N}{\mathbb{N}}
\newcommand{\Z}{\mathbb{Z}}
\newcommand{\indicator}[1]{\mathbbm{1}_{\set{#1}}}
\newcommand{\CA}{\mathcal {A}}
\newcommand{\CB}{\mathcal {B}}
\newcommand{\CL}{\mathcal {L}}
\newcommand{\CO}{\mathcal {O}}
\newcommand{\CP}{\mathcal {P}}
\newcommand{\CR}{\mathcal {R}}
\newcommand{\CS}{\mathcal {S}}
\newcommand{\CU}{\mathcal {U}}
\newcommand*{\wih}{\widehat}
\newcommand*{\vr}{\varrho}
\newcommand*{\la}{\lambda}
\newcommand*{\de}{\delta}
\newcommand*{\ve}{\varepsilon}
\newcommand*{\al}{\alpha}
\newcommand*{\Vv}{{\text{\bf Var}}}
\newcommand*{\be}{\begin{equation}}
\newcommand*{\ee}{\end{equation}}
\newcommand*{\ba}{\begin{aligned}}
\newcommand*{\ea}{\end{aligned}}
\newcommand*{\barr}{\begin{array}{c}}
\newcommand*{\earr}{\end{array}}
\def \toinp    {\buildrel {\Pv}\over{\longrightarrow}}
\def \toindis  {\buildrel {d}\over{\longrightarrow}}
\newcommand*{\Yrn}{Y_r^{\scriptscriptstyle{(n)}}}
\newcommand*{\Ybn}{Y_b^{\scriptscriptstyle{(n)}}}
\newcommand*{\Ybnla}{\big(Y_b^{\scriptscriptstyle{(n)}}\big)^\la}
\newcommand*{\CMBN}{\mathcal M^{\sss{(b)}}_n}
\newcommand*{\MBN}{M^{\sss{(b)}}_n}
\newcommand*{\wit}{\widetilde}
\newcommand*{\opt}{\mathcal{O}}
\newcommand*{\ops}{\mathrm{O}}
\newcommand*{\opss}{\mathrm{Opt}}
\newcommand*{\ind}{\mathbbm{1}}
\def\namedlabel#1#2{\begingroup
    #2%
    \def\@currentlabel{#2}%
    \phantomsection\label{#1}\endgroup
}
\begin{document}
	\title[Competition on Configuration model for $\tau\in (2,3)$]{Fixed speed competition on the configuration model with infinite variance degrees: unequal speeds}

	\date{\today}
	\subjclass[2010]{Primary: 60C05, 05C80, 05C82, 90B15, 91D30.}
	\keywords{Random networks, configuration model, competition, power law degrees, typical distances, co-existence}

	\author[Baroni]{Enrico Baroni}
	\author[van der Hofstad]{Remco van der Hofstad}
	\address{Department of Mathematics and
	    Computer Science, Eindhoven University of Technology, P.O.\ Box 513,
	    5600 MB Eindhoven, The Netherlands.}
\author[Komj\'athy]{J\'ulia Komj\'athy}
	\email{e.baroni@tue.nl, rhofstad@win.tue.nl, j.komjathy@tue.nl}

\begin{abstract}
We study competition of two spreading colors starting from single sources on the configuration model with i.i.d.\ degrees following a power-law distribution with exponent $\tau\in (2,3)$.
In this model two colors spread with a fixed but not necessarily equal speed
on the unweighted random graph.  We show that if the speeds are not equal, then the faster color paints almost all vertices, while the slower color can paint only a random subpolynomial fraction of the vertices.
We investigate the case when the speeds are equal and typical distances in a follow-up paper.
\end{abstract}

\maketitle

\section{Introduction and results}
\subsection{The model and the main result}
Let us consider the configuration model $\CMD$ on $n$ vertices, where the degrees $D_v, v\in \{1,2,\dots, n\}:=[n]$ are i.i.d.\ with a power-law tail distribution. That is, given the number of vertices $n$, to each vertex we assign a random number of half-edges drawn independently from a distribution $F$ and the half-edges are then paired randomly to form
edges. In case the total number of half-edges $\CL_n:=\sum_{v\in[n]} D_v$ is not even, then we drop one half-edge from $D_n$ (see below for more details).
 We assume that
\be\label{eq::F} \frac{c_1}{x^{\tau-1}}\le 1- F(x)= \Pv(D>x) \le \frac{C_1}{x^{\tau-1}},\ee
 with $\tau \in (2,3)$, and all edges have weight $1$. We assume $\Pv(D\ge 2)=1$ guaranteeing that the graph has almost surely a unique connected component of size $n(1-o(1))$ see e.g.\ \cite[Theorem 10.1]{H10} or \cite{MolRee95, MolRee98}.\\
 We further denote the mass function of $-1$ plus the \emph{size-biased version} of $D$ by
 \be\label{def::size-biased1}
f^\ast_{j}:=\frac{(j+1)\Pv(D=j+1)}{\Ev[D]}\text{, \ }j\geq 0.
\ee
We write $F^\ast(x)$ for the distribution function $F^\ast(x)=\sum_{j=0}^{\llcorner x\lrcorner}f^\ast_{j}$.

 Pick two vertices $\mathcal R_0$ (red source) and $\mathcal B_0$ (blue source) uniformly at random in $[n]$,
 and consider these as two sources of spreading infections.
 Each infection spreads \emph{deterministically} on the graph:
for color blue it takes $\la$ time units to pass through an edge, while color red needs $1$ unit of time for that.
  Without loss of generality we can assume that $\la>1$. Each vertex is painted the color of the infection that reaches it
  first, keeps its color forever, and starts coloring the outgoing edges at the speed of its color.  When the two colors reach a vertex at the same time, the vertex gets color red
  or blue with an \emph{arbitrary adapted rule}, i.e. a rule that is not depending on the future. One examples of such a rule is when it is painted red or blue with probability $1/2$ each, independently of everything else. Another natural adapted rule is that a vertex, when the two colours arrive at it at the same time, is painted red or blue with probability proportional to the number of previously red and blue-colored neighbors of the vertex.\\
	Let $\mathcal R_t:=\mathcal R_t(n)$ and $\mathcal B_t:=\mathcal B_t(n)$
  denote the number of red and blue vertices occupied up to time $t$, respectively. We denote by $\CB_{\infty}:=\CB_{\infty}(n)$ the number of vertices eventually occupied by blue. We emphasise that the randomness in this model is only coming from the structure or topology of the graph and the uniform choice of the source vertices
 for the two colors; once these are settled, the dynamics is completely deterministic.
 
   Roughly speaking, the first main result of this paper, Theorem \ref{thm::main} below, tells us that in the quenched setting, i.e., for almost all realizations of the graph $\CMD$ and for almost all initial vertices $\CR_0, \CB_0$, the faster color always wins, that is, it gets $n-o(n)$ many vertices. Furthermore, the number of vertices the slower color paints is a subpolynomial of $n$. More precisely, blue paints whp $\exp\{ (\log n)^{2/(\la+1)} H(n, Y_r, Y_b)\}$ many vertices, i.e.,\ a stretched exponential in $\log n$ with exponent $2/(\la+1)<1$, and where the coefficient  $H(n, Y_r, Y_b)$ is a random function that depends on $n, \la, \tau$, and two random variables $Y_r$ and $Y_b$, that can intuitively be interpreted as some measure of `how good' the neighbourhoods of the source vertices are: the faster the local neighbourhoods grow, the larger these variables are. Moreover, $H(n, Y_r, Y_b)$ does not converge: it has an oscillatory part that exhibits  `$\log\log$-periodicity'.  
   
  The other main result, Theorem \ref{thm::maxdegreeblue}, shows that the degree of the maximal-degree vertex that blue ever occupies obeys asymptotic behaviour similar to blue's total number, with a strictly smaller coefficient in the exponent, and the same $\log\log$-periodicity.
  This phenomenon is due to integer part issues coming from the fact that the edge weights are concentrated on a lattice. We emphasise again that these results are quenched.
  
  To be able to state the main theorem precisely, let us define the following random variables:
  \begin{definition}\label{def::limit-variables}
  Let $Z_k^{\sss{(r)}}, Z_k^{\sss{(b)}}$ denote the number
 of individuals in the $k$th generation of two independent copies of a Galton-Watson process described as follows: the size of the first generation has distribution $F$ satisfying \eqref{eq::F},
 and all the further generations have offspring distribution $F^\ast$ from \eqref{def::size-biased1}.
 Then, for a fixed but small $\rho>0$ let us define
\be\label{def::yrn-ybn}\Yrn:=(\tau-2)^{t(n^\rho)} \log (Z^{\sss{(r)}}_{t(n^\rho)}),  \quad \Ybn:=(\tau-2)^{\lfloor t(n^\rho)/\la \rfloor} \log (Z^{\sss{(b)}}_{\lfloor t(n^\rho)/\la \rfloor}),\ee
  where $t(n^\rho)=\inf_k\{Z_k^{\sss{(r)}} \ge n^\rho\}$.
Let us further introduce
  \be\label{def::Y} Y_r:= \lim_{k\to\infty} (\tau-2)^k \log (Z_k^{\sss{(r)}}), \quad Y_b:=\lim_{k\to\infty} (\tau-2)^k \log (Z_k^{\sss{(b)}}).\ee
  \end{definition}
  We will see below in Section \ref{sc::BP} that these quantities are well-defined and that $(\Yrn, \Ybn)\toindis (Y_r,Y_b)$ from \eqref{def::Y} as $n\to \infty$. With these notation in mind, we have the following theorem:
\begin{theorem}\label{thm::main}
Fix $\lambda>1$. Then, $\lim_{n \to \infty}\CR_\infty/n =1$ whp. Further, 
there exists a bounded and strictly positive random function $C_n(\Yrn,\Ybn)$ such that as $n\to\infty$
\be \label{eq::thm-main}   \frac{\log (\mathcal B_\infty)}{(\log n)^{\tfrac{2}{\la+1}} C_n(\Yrn,\Ybn)} \toindis  \left(\frac{Y_b^\la}{Y_r}\right)^{\frac{1}{\la+1}}. \ee

We identify $C_n(\Yrn, \Ybn)$ in \eqref{def::norm-constant} as a deterministic, oscillating (non-convergent) function of $\tau, \la, n,\Yrn, \Ybn$. $C_n(\Yrn,\Ybn)$ has the uniform (non-tight) bounds
\[     \left(\frac{(\tau-2)}{(\tau-1)^{2}}\frac{(3-\tau)}{1-(\tau-2)^\la}\right)^{\frac{1}{\la+1}} < C_n(\Yrn, \Ybn) < (\tau-2)^{-2}\cdot \frac{4-\tau}{1-(\tau-2)^{\la}}.  \]\end{theorem}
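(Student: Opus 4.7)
My proof plan centers on a ``meeting-in-the-middle'' analysis of the breadth-first explorations of red and blue on $\CMD$. First, I would couple the $k$-neighborhoods of $\CR_0$ and $\CB_0$ to two independent Galton-Watson trees with first-generation offspring distribution $F$ and subsequent generations having the size-biased distribution $F^\ast$, as in Definition \ref{def::limit-variables}. This coupling is standard for the configuration model and remains valid up to the first level $t(n^\rho)$ at which either BP reaches $n^\rho$ vertices. Martingale theory for supercritical BPs with offspring-tail exponent $\tau-1\in(1,2)$ gives the doubly-exponential growth $\log Z_k \sim Y(\tau-2)^{-k}$ with $Y>0$ a.s., yielding the limit pair $(Y_r,Y_b)$ and the joint convergence $(\Yrn,\Ybn)\toindis (Y_r,Y_b)$ from Definition \ref{def::limit-variables}.

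The second ingredient is the typical distance in $\CMD$ for $\tau\in(2,3)$, known from earlier work to satisfy whp
\[
d_{\CMD}(\CR_0,\CB_0)\,|\log(\tau-2)| = 2\log\log n - \log\Yrn - \log\Ybn + O(1),
\]
up to integer-part oscillations. Tracking the race: at time $t$, red has painted every non-blue vertex within graph distance $t$ from $\CR_0$, and blue every non-red vertex within $t/\la$ from $\CB_0$. Along the geodesic between the sources, red advances one edge per unit time from $\CR_0$ while blue advances at rate $1/\la$ from $\CB_0$, so they first collide at the time $t^\ast$ satisfying $t^\ast+t^\ast/\la=d_{\CMD}(\CR_0,\CB_0)$, i.e., at blue-level
\[
k_b^\ast \;=\; t^\ast/\la \;=\; \frac{d_{\CMD}(\CR_0,\CB_0)}{\la+1}.
\]
Plugging this into the BP level-size asymptotic $\log|\CB_\infty|\approx Y_b(\tau-2)^{-k_b^\ast}$ and substituting the distance formula gives
\[
\log|\CB_\infty| \;\approx\; (\log n)^{2/(\la+1)}\,(Y_b^\la/Y_r)^{1/(\la+1)}\,C_n(\Yrn,\Ybn),
\]
which matches \eqref{eq::thm-main}. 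The claim $\CR_\infty/n\to 1$ then follows immediately, since $|\CB_\infty|=n^{o(1)}$.

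The main obstacle lies in turning this heuristic into tight two-sided bounds and in extracting the non-convergent oscillating prefactor $C_n$. For the upper bound on $|\CB_\infty|$, I would need to show that every vertex $v$ with $d(\CB_0,v) > k_b^\ast + O(1)$ satisfies $d(\CR_0,v) < \la\,d(\CB_0,v)$ whp, which relies on the $\tau\in(2,3)$ core behaving as a ``small world'' once the red exploration has reached hubs of polynomial degree by time $t^\ast$. For the lower bound, the BP approximation for the blue cluster must remain accurate up to level $k_b^\ast$, ruling out premature red invasion of blue's ball through short cycles or hub-shortcuts; this uses independence of the two explorations up to sizes $n^\rho$ and a careful intersection argument in the intermediate regime. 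Finally, the oscillating $C_n$ arises because $k_b^\ast$ is forced to be an integer while $d_{\CMD}(\CR_0,\CB_0)/(\la+1)$ is not; the fractional part of this ratio oscillates in $\log\log n$ rather than converging, producing the $\log\log$-periodicity. Quantifying how many vertices of the partially explored level $k_b^\ast+1$ blue manages to color before being cut off by red is the technical step that converts the fractional contribution into the explicit uniform bounds on $C_n$ stated in the theorem.
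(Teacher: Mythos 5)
Your heuristic correctly reproduces the leading order $(\log n)^{2/(\la+1)}$, the limit variable $(Y_b^\la/Y_r)^{1/(\la+1)}$, and the source of the $\log\log$-periodic oscillation (integer parts of the level at which the two growth fronts cross), and your first two ingredients (BP coupling up to size $n^\rho$, Davies-type double-exponential growth giving $(\Yrn,\Ybn)\toindis(Y_r,Y_b)$) match the paper's Section \ref{sc::BP}. However, there is a genuine gap in the step that is supposed to convert the heuristic into two-sided bounds: you cap $\CB_\infty$ by the blue ball of radius $k_b^\ast+O(1)$, proposing to show that every vertex $v$ with $d(\CB_0,v)>k_b^\ast+O(1)$ satisfies $d(\CR_0,v)<\la\,d(\CB_0,v)$ whp. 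This statement is false, and the mechanism it ignores changes the answer at the level of the normalizing function $C_n$, which is part of the theorem. At the collision time, red only controls vertices of degree above the descending threshold $\wit u_\ell$; the overwhelming majority of (bounded-degree) vertices are still uncolored, and blue keeps spreading through them for further rounds until the red avalanche reaches constant-order degrees. Each additional round multiplies blue's count by a factor $\nu_j$ (expected degree truncated at $\wit u_{\lfloor t_b+\la j-T_r\rfloor}$), and the cumulative contribution $\sum_j\log\nu_j$ is of the \emph{same} order $(\log n)^{2/(\la+1)}$ as $\log\MBN$; this is exactly the $C_n^{\text{paths}}$ summand of $C_n$ in \eqref{def::norm-constant}, and the paper's entire last phase (path counting for the ``optional cluster'', Lemma \ref{lem::wihZk}, Corollary \ref{cor::chebisev}, Lemmas \ref{lem::opt-z-lemma} and \ref{lem::red-blue-intersection}) exists to control it, together with a matching lower bound showing red cannot ``bite out'' much of this cluster.

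A related conflation appears in your lower bound: the identification $\log|\CB_\infty|\approx Y_b(\tau-2)^{-k_b^\ast}$ treats blue's cluster as an unrestricted BP ball of radius $k_b^\ast$, but blue's exploration is truncated well before that level because red already occupies all hubs: what grows like $\exp\{\Ybn(\tau-2)^{-\lfloor(T_r+t_c)/\la\rfloor-1}\}$ is (essentially) blue's \emph{maximal degree}, not its cluster size, and the number of blue vertices per layer ($A_i$ in Lemma \ref{lem::numberofverticesinGamma}) is of much smaller exponential order. Blue's actual final size is the maximal-degree half-edge count $\MBN$ (which itself requires the five-case analysis of which color jumps first after the crossing and the extra factor $g(d(t_c),J_r,J_b)$ in Lemma \ref{lem::verticeswithmaxdegree}) multiplied by the post-collision factors $\prod_j\nu_j$. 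These two omissions happen to cancel at the level of the order of magnitude and of the limit law, but not at the level of $C_n$ and its bounds, so the proof as proposed does not establish the theorem as stated; to repair it you would need precisely the avalanche/collision case analysis of Section \ref{sc::meetingtime} and the degree-restricted path counting of the final phase.
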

\begin{remark}\normalfont
We also give the accompanying tight bounds on $C_n(\Yrn, \Ybn)$, see \eqref{eq::tight-bounds}.
\end{remark}

   Let us denote
   \be\label{def:maxdeg-blue} D_{\max}^{\sss{(b,n)}}(\infty):= \max_{i\in \mathcal B_{\infty}} D_i\ee
   the degree of the maximal degree vertex eventually occupied by blue. As a side result of the proof of Theorem \ref{thm::main}, we get the following theorem:
   \begin{theorem}\label{thm::maxdegreeblue} Fix $\la>1$. There exists a bounded and strictly positive random function $C_n^{\max}(\Yrn, \Ybn)$ defined below in \eqref{def::dn}, such that as $n\to \infty$
\be\label{eq::maxdegreeatinfty} \frac{\log  D_{\max}^{\sss{(b,n)}}(\infty)}{(\log n)^{\frac{2}{\la+1}} \ C_n^{\max}(\Yrn, \Ybn)} \toindis \left( \frac{Y_b^{\la}}{Y_r}\right)^{\frac{1}{\la+1}} \ee
 Further,  $C_n^{\max}(\Yrn,\Ybn)$ is stochastically dominated by $C_n(\Yrn,\Ybn)$ whp, and
\be\label{eq::dnmaxbounds} \left(\frac{(\tau-2)^{2+\la}}{(\tau-1)^2}\right)^{\frac{1}{\la+1}} \le C_n^{\max}(\Yrn,\Ybn) \le \left(\frac{\tau-2}{4}\right)^{\frac{1}{\la+1}}\!\!\!\!. \ee
\end{theorem}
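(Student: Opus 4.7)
The plan is to prove Theorem \ref{thm::maxdegreeblue} in parallel with Theorem \ref{thm::main}, reusing the branching-process (BP) coupling of the exploration neighbourhoods around $\CR_0$ and $\CB_0$ but tracking the maximal half-edge degree that blue \emph{successfully} claims rather than the cumulative count. Let $T_b$ denote the graph-distance generation from $\CB_0$ at which blue's competition with red effectively concludes, and couple blue's ball $\partial B_k(\CB_0)$ and red's ball $\partial B_k(\CR_0)$ with the independent GW trees $Z^{\sss{(b)}}$, $Z^{\sss{(r)}}$ of Definition \ref{def::limit-variables} for all $k\le T_b+1$. By the doubly-exponential growth characteristic of $\tau\in(2,3)$, this coupling is valid with $1-o(1)$ probability up to essentially the collision time of the two explorations, since at that moment the total number of explored half-edges is still $o(\sqrt{\CL_n})$.

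The key extremal input is that in a GW tree with heavy-tailed offspring distribution satisfying $\Pv(F^\ast>x)\asymp x^{-(\tau-2)}$, the maximal child dominates the sum, so that, writing $D_{\max,k}^{\sss{(b)}}$ for the largest degree of any blue vertex in generations $\leq k$ of the coupled tree,
\[
\log D_{\max,k}^{\sss{(b)}} \;=\; (1+o(1))\,\log Z_{k+1}^{\sss{(b)}} \;=\; (1+o(1))\,(\tau-2)^{-(k+1)}\, Y_b,
\]
by \eqref{def::Y}. In the competition, however, blue does \emph{not} claim every vertex in its $T_b$-ball: the highest-degree vertices on the blue frontier tend already to have been captured by red, precisely because their many half-edges make them likely to be reached first from red's territory. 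I expect this effect to contract the max-degree exponent by a factor of order $(\tau-2)$ relative to the naive BP bound, and this is the mechanism responsible for the stochastic domination $C_n^{\max}\le C_n$ claimed in the theorem.

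Next, I import from the proof of Theorem \ref{thm::main} the identification of the halting generation $T_b$. Blue's progress stops when the product of the maximal degrees on the two frontiers saturates $\CL_n$, subject to the speed-rule balance $\la T_b\approx T_r$; solving these coupled equations gives $(\tau-2)^{-T_b}\asymp (\log n/Y_b)^{\la/(\la+1)}\,Y_r^{-1/(\la+1)}$ modulo a $\log\log$-periodic oscillation caused by the rounding $T_b\in\mathbb{N}$. Substituting this into the max-degree formula above yields \eqref{eq::maxdegreeatinfty}: the same scaling $(\log n)^{2/(\la+1)}(Y_b^\la/Y_r)^{1/(\la+1)}$ as in Theorem \ref{thm::main}, multiplied by an explicit, bounded, oscillating prefactor $C_n^{\max}(\Yrn,\Ybn)$. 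The uniform bounds \eqref{eq::dnmaxbounds} then drop out by evaluating the oscillatory factor at its extremes within one period, and $C_n^{\max}\le C_n$ whp follows from the degree-capture contraction noted above.

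The main obstacle is quantifying this loss of maximum degree due to hubs being siphoned off by red. Concretely, I will need to show that for every $k\le T_b$, any vertex on blue's generation-$k$ frontier with degree exceeding a threshold $d_k^\ast := \exp\{(1-\eps)(\tau-2)^{-k}Y_b\}$ has, with probability $1-o(1)$, at least one half-edge paired into a vertex already occupied by red, so that red wins the race to this hub. This is a first-moment calculation weighted by degree, using that red's ball of radius $\la k$ already contains a proportion of order $\exp\{Y_r(\tau-2)^{-\la k}\}/\CL_n$ of all half-edges, combined with the elementary fact that a hub of degree $d$ receives a Binomial$(d,\cdot)$ number of connections from this red set. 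A matching second-moment argument for hubs just below the threshold will pin down $C_n^{\max}$ to the oscillating formula in \eqref{def::dn}. Controlling the \emph{maximum} rather than the sum is the delicate point, since a single rare pairing to an exceptional hub could otherwise distort the max far more than it distorts the count that drives Theorem \ref{thm::main}.
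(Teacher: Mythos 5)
Your proposal has two genuine gaps, and both sit at the heart of the argument. First, the branching-process coupling cannot be stretched ``up to essentially the collision time''. The coupling of Proposition \ref{prop::coupling}/Lemma \ref{lem::coupling-for-two} is only valid while at most $n^{\vr}$ vertices have been explored; at the collision time the red cluster is already in its avalanche phase and contains \emph{every} vertex of degree above $\wit u_{r_-^\ast}$, where $\log \wit u_{r_-^\ast}=\Theta((\log n)^{2/(\la+1)})$, so red alone occupies $n^{1-o(1)}$ vertices and half-edges --- far from your claimed $o(\sqrt{\CL_n})$. This is why the paper abandons the tree picture after time $t(n^{\vr'})$ and works with the graph's layer structure instead: the climb through the layers $\Gamma_i$, $\Gamma_i^{\sss{(b)}}$ (Lemma \ref{lem::gamma_i_connectivity} for the lower bound, the bad-path count of Lemma \ref{lem::badpaths} for the upper bound), the peak crossing via Lemma \ref{lem::direct_connect}, and the red avalanche \eqref{eq::widetildegamm}. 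Related to this, your halting criterion is misidentified: blue does not stop when ``the product of the maximal frontier degrees saturates $\CL_n$'' (that is the direct-connection criterion used only to cross the peak), but when blue's climbing maximal degree meets the descending avalanche threshold $\wit u_\ell$, since red has already swept all vertices above that level; this is equation \eqref{def::intersection}. Indeed your solved formula $(\tau-2)^{-T_b}\asymp(\log n/Y_b)^{\la/(\la+1)}Y_r^{-1/(\la+1)}$ would give a max degree of order $\exp\{c\,(\log n)^{\la/(\la+1)}(Y_b/Y_r)^{1/(\la+1)}\}$, which is inconsistent with the $(\log n)^{2/(\la+1)}(Y_b^{\la}/Y_r)^{1/(\la+1)}$ scaling you are trying to prove, so the heuristic balance you set up is not the right one.

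Second, the identification of $C_n^{\max}$ and the bounds \eqref{eq::dnmaxbounds} cannot come out of a degree-weighted first/second-moment ``race at the hubs'' with a threshold $d_k^\ast$ depending only on $Y_b$. The mechanism limiting blue is not that exceptional hubs on blue's frontier happen to have an edge into red's territory; it is that red whp occupies \emph{all} of $\wit\Gamma_\ell$ by time $T_r+\ell$, so the cutoff is deterministic given $(\Yrn,\Ybn)$ up to lattice effects. The entire content of \eqref{def::dn} is the case analysis around the continuous crossing time $t_c$: which colour jumps first afterwards ($J_r$ versus $J_b$ in \eqref{def::JrJb}) and whether the exponent gap $d(t_c)=\{t_c\}+\{(T_r+t_c)/\la\}$ exceeds $1$, giving the five cases encoded in $f(d(t_c),J_r,J_b)$ of \eqref{eq::fdtc}. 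Your sketch acknowledges a ``$\log\log$-periodic oscillation'' only in passing and offers no route to this case structure, yet without it neither \eqref{def::dn} nor the bounds \eqref{eq::dnmaxbounds} can be derived. Finally, the stochastic domination $C_n^{\max}\le C_n$ is not a ``contraction by a factor $(\tau-2)$ from hub capture''; in the paper it follows from comparing \eqref{def::dn} with \eqref{def::norm-constant}, where $C_n=C_n^{\text{half-edge}}+C_n^{\text{paths}}$ and $C_n^{\text{half-edge}}\ge C_n^{\max}$ by Lemma \ref{lem::verticeswithmaxdegree}, i.e.\ it reflects that the count of blue vertices dominates the count coming from the maximal-degree layer alone, not an extra loss in the max-degree exponent.
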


 \begin{remark}\normalfont
We emphasise that these results are valid for any adapted rule of decision when the two colors jump at the same time to a vertex. In case $\la$ is irrational, clearly, this rule will never be used.  If $\la$ is rational, then the normalisation random variables $C_n(\Yrn,\Ybn)$, $C_n^{\max}(\Yrn,\Ybn)$ depend on the rule -- they are slightly different if the rule is so that these vertices are always painted red, from the case when there is a positive chance that these vertices are painted blue, but the upper and lower bounds on $C_n(\Yrn,\Ybn), C_n^{\max}(\Yrn,\Ybn)$ remain the same.   On the other hand, when $\la=1$, this rule will play an important role in the outcome.
  \end{remark}

\begin{remark}[More than two colors]\normalfont
If there are a finite number of colors with edge passage-times $1\!=\!\la_1< \la_2\le \dots \le \la_k$, then the statements of Theorems \ref{thm::main} and \ref{thm::maxdegreeblue} stay valid for each $\la_i,\  2\le i\le k$, with limit variables $(Y_i^{\la_i}/Y_1)^{1/{(\la_i+1)}}$ on the right hand side of  \eqref{eq::thm-main}, where $Y_i$ are i.i.d.\ copies of $Y$.
 The reason for this is that with high probability each slower color only meets the fastest color and never meets the other slow ones. That is, the clusters of slower colors are separated from each other by the cluster of the fastest color.
\end{remark}

\subsection{Related work and discussion}
First we give a (non-complete) overview of the literature on competition on different graph models. Then we mention some more applied results.

In a seminal paper \cite{HP98} H\"aggstr\"om and Pemantle introduced competition on the grid $\Z^d$. The model is called the two-type Richardson's model,
and it describes the dynamics of two (red and blue) infections with single source vertices $v_{0}, v_{1}\in \Z^d$ that compete to conquer the grid $\Z^d$.  In this continuous-time model, a vertex of $\Z^d$ gets a given color with rate  proportional to the number of infected neighbours of that color; then, once a vertex is infected, it keeps its color forever. Note that the evolution of a single color without the presence of the other color has independent exponential passage times across edges, and a vertex gets infected at the time that equals the minimal length path from the source to the vertex. Hence, a single color process is often called  \emph{first passage percolation} in the literature.
Multiple colours then lead to the name \emph{competing first passage percolation}.

For two colors, we have two possible evolution scenarios: in the first, one of the growing clusters completely blocks the growth of the other color -- by surrounding it -- and then it infects all the remaining healthy vertices. In the second scenario the two clusters continue to grow unboundedly forever: this is called \emph{coexistence}.
The important question is: does coexistence occur with positive probability?
H\"aggstr\"om and Pemantle \cite{HP98} proved that this is the case for the $\Z^{2}$ grid with i.i.d.\ exponential passage times. Later this result has been extended by Garet and Marchand \cite{GarMar05} for $\Z^{d}$, $d\geq 2$ for a vast class of passage time distributions under mild hypothesis.
For further literature on the Richardson model see \cite{DeiHag06, DeiHag07, GarMar08, MR2114988, MR2462555}.

Recently, a noticeable scientific interest arose in understanding the structure of large but finite networks and the behaviour of spreading processes on these networks. Typically, results on these topics are called first passage percolation, see e.g. \cite{BHH10, BHH11, BHH14}. It is then natural to ask what happens when one considers competition of multiple spreading processes on these networks. When studying competitive spreading, one might also gain a more detailed understanding of the structure of these graphs.

The idea of competitive spreading on finite random graph sequences raises several questions. First and foremost, due to the finite size of the graphs the main questions about these models must be rephrased, since infinite growth can never happen. Thus, the definition of coexistence had to be modified in this setting. Consider two competing colors on a sequence of random graphs: is there an asymptotic coexistence of the two colors? That is, is it possible  that both colors paint a positive proportion of vertices with positive probability, as the size of the graph tend to infinity?
If this is not the case, can we determine the number of eventually occupied vertices for both colors in terms of the size of the graphs? What happens if we modify the passage dynamics so that the two infections have different rates of growth $\lambda_{1}$ and $\lambda_{2}$? Here we give a (non-complete) overview of the existing literature on these topics for different random graph models.

Antunovic, Dekel, Mossel and Peres \cite{ADMP11}
give a detailed analysis of competition on random regular graphs (degree at least $3$) on $n$ vertices with i.i.d. exponential edge weights. They analyse the number of eventually occupied vertices by both colors as a function of the speeds $\lambda_{1},\lambda_{2}$ and of the initial number of infected vertices, that might even grow with $n$. They show that asymptotically almost surely the color with higher rate occupies $n-o(n)$ vertices and the slower color paints approximately $n^\beta$ vertices for some deterministic function $\beta(\la_1, \la_2)$.  Their result include asymptotic coexistence for equal speeds $\la_1=\la_2$ for infections starting from single sources.

Next, van der Hofstad and Deijfen \cite{DH13} investigates competition with exponential spreading times on the configuration model with i.i.d.\ degrees coming from a  power-law distribution with exponent $\tau\in (2,3)$. They prove that even if the speeds are not equal, the `winner' color is \emph{random}, i.e. the color with slower rate can still take most of the graph. Moreover, the winning color paints all but a finite number of vertices. The randomness of the `winner' color comes from the fact that the underlying Markov branching process explodes in finite time, and the slower color has a positive chance to explode earlier than the faster color.

A slightly different, discrete time competition model is analysed by Antunovi\'c, Mossel and R\'acz in \cite{AntMosRac13}. There, the underlying random graph is the growing linear preferential attachment model, and vertices pick their color upon entering the network randomly from the colors of the vertices they attach to. The probability of picking a color is a (possibly linear) function of the number of neighbors with the given color, called the \emph{coloring function}. The authors analyse coexistence of colors in terms of the properties of the coloring function. Note that in this case the graph has power law $\tau=3$. The proofs are based on comparison to P\'olya urns.

Finally, this paper considers competition on the configuration model with i.i.d.\
power-law degrees with exponent $\tau\in(2,3)$, but with \emph{deterministic} unit edge-weights.
Theorem \ref{thm::main}  shows that the fact that the edge weights have a support separated from zero entirely changes the picture observed in \cite{DH13}:
 when the speeds are unequal, the faster color always paints $n-o(n)$ vertices, and the slower color can paint only subpolynomial  many vertices.

If the speeds are equal, then the phenomena is richer: as a side result of the analysis of the $\la=1$ case, we obtain precise distributional limits of the second order terms in typical distances in the graph. Further, we conjecture that there is still no coexistence with high probability, and the loser type can paint a polynomial many vertices with a random exponent that is less than $1$. However, this random exponent sensitively depends on the initial local neighbourhoods of the source-vertices and shows different behaviour if the corresponding random variables are within a very specific constant factor of each other or if they are not. Due to the length of the analysis of this case and to put more highlight on the rich phenomena that comes with it, we decided to put the equal-speed case  in a subsequent paper soon to be published.

From the more applied perspective, competition on networks is present in many aspects of our life.
To start with an example, in marketing, companies compete for customers who are connected via their acquaintance network, and they provide word-of-mouth recommendations and opinions about the services of the different companies, see \cite{FudBan04, FudEll95}.  For economic studies on the importance of word-of-mouth, see e.g.\ \cite{Arn67, Butt98, Dicht66}.
 Recently, `word-of-mouth' recommendations happen also on large scale on different social online media such as Facebook, and Twitter. For a survey on how online feedback mechanisms differ from original word-of-mouth recommendations and what challenges they pose, see \cite{Dell03}. The paper \cite{LesAdaHub07} analyses recommendation-based viral marketing on social media, where they use viral marketing also to identify communities of online networks. For recent economic studies of the importance of word-of-mouth recommendations, see e.g.\ \cite{Chev06, KumPetLeo07}.

 In epidemiology, viruses and bacterial infections spread through society.
 In this setting, competition can happen among different strains of a pathogen, see e.g.
 \cite{Lip09} for a study under which conditions coexistence can occur and references therein. In the physics community, \cite{AhnJeo06, KarNew11, Newman05} study the effect of the underlying network on co-existence of competing viruses.

The  epidemiological analogies have been further exploited by \cite{PraBeu12}, where they study a variation of susceptible-infectious-susceptible epidemic spread, where two epidemics are immune to each other, and the authors show that one of them completely takes over (similarly as in \cite{DH13}). Then, \cite{Beu12} studies how partial immunity can cause coexistence in the previous model.

\subsubsection*{Discussion and open problems}
 The analysis of competition on the configuration model is far from complete. One can for instance ask about different spreading dynamics (edge lengths) and different power-law exponents. Further, one can ask what happens if the colors have entirely different passage time distributions (e.g.\ one is explosive and the other is not), or what happens if one of the colours have a main advantage by starting from one or many initial vertices of very high degree. These can correspond to e.g.\ competition advantage of different product on the network or to different marketing strategies.
Here we list some conjectures for uniformly picked single vertex sources of infections on $\CMD$ with i.i.d. power law degrees of distribution $D$ with exponent $\tau$. We further assume that the time to passage times can be represented as i.i.d. random variables on edges, from distribution $I_r$, $I_b$ for red and blue, respectively.

\noindent 1.\emph{ $\tau\in (2,3)$:}

A.  If the spreading dynamics are so that the underlying
branching processes defined by $D,I_r$ and $D, I_b$ are both explosive, then we conjecture that there is \emph{never} coexistence and either
of the two colors can win. This is one of our ongoing research projects.

B. If the underlying branching process for one color has explosive spreading while the other one has not, than we suspect that the explosive one always wins.

C. If  both underlying branching processes are non-explosive, and further assume $I_r{\buildrel d \over = }\ \la I_b$, then we guess
that there is no coexistence if $\la\neq1$ (the fastest color wins). We suspect that the number of vertices the `loser' color paints
 depends sensitively on the weight distribution. The outcome in the $\la=1$ case might sensitively depend on the weight distribution.

\noindent2. \emph{$\tau>3$:}

D.  We suspect that if the transmission times $I_r, I_b$ both have continuous distribution, and the branching process approximations of them have different Malthusian parameters, then there is no coexistence, and the number of vertices painted by the slower color is $n^{\beta}$ for some $\beta\in(0,1)$.
When the Malthusian parameters agree, we suspect that there is asymptotic co-existence.

\noindent 3.\emph{ $\tau=3$:}

E. In this case $\Pv(D>x) = L(x)/ x^2$, with $L(x)$ a slowly varying function at infinity. We suspect that $L(x)$ and the transmission distributions $I_r, I_b$ jointly determine into which category among A, C, D above the spreading of the colours belongs to: if the $\mathbf{x \log x}$ criterion holds for the underlying age-dependent branching process with $D, I_r$ and $D, I_b$ then we expect that the model will show similar phenomena as in case D. If the underlying branching processes are explosive, then similar phenomena is expected as in case A, and if it is none of these two, then as in case C. Further, if the two colours have significantly different dynamics, i.e.\ one is explosive and the other one is not, then we conjecture that case B applies.

\subsection{Overview of the proof and structure of the paper}
The heuristic idea of the proof is as follows: we can start growing the two clusters simultaneously. The growth has six phases, each corresponding to a section below, described as follows:
\begin{enumeratei}
\item \label{ph::bp} \emph{Branching process phase.}\\ At first, whp, the two colored clusters do not meet and the
 growth of both clusters is characterised by the growth rate of the branching process (BP) to which they can be coupled.
  This we call the \emph{branching process phase}. The length of this phase is of order $\log\log n/ |\log(\tau-2)| + O(1)$. Then,
   the faster color (red) reaches the area where the coupling fails to remain valid: $\mathcal R_t$ reaches size $n^\vr$ for some $\vr>0$.
 \item \label{ph::montain_up}\emph{Mountain climbing phase.}\\
    At this point, we start making use of the structure of high-degree vertices in the graph: due to high connectivity, the subgraph formed by high-degree vertices can be represented as a `mountain' where the height function is linear in the $\log\log$-degree. Level sets of this mountain represent vertices with degree of the same order of magnitude, with the maximal degree in the graph at the top of the mountain. We partition this mountain into \emph{layers} -- that is, constant length intervals on a $\log\log$-scale -- and we show that every vertex in a given layer has at least one neighbour in one layer higher.
As a result, we show the existence of a path for red through
   these layers of vertices of higher and higher degree such that the path reaches some vertex with degree larger
     than  $n^{(\tau-2)/(\tau-1)}$ at the end.  This we call the \emph{mountain climbing phase}. The climbing phase lasts only finitely
     many steps, but the constants turns out to be important, so we perform a rather careful analysis. We denote the total time of the branching process phase and the climbing phase for red by $T_r$.
 \item \label{ph::peak}\emph{Crossing the peak of the mountain.} \\
    We handle how the color red goes through the peak of the `degree-mountain' very carefully.  Vertices of degree much larger than $\sqrt{n}$ form a subgraph that is a complete graph, hence it takes only one step to paint all the very high degree vertices,
     but the degree of vertices to which the faster color arrives at the end of this single step is delicately depending on the initial random growth rates of the branching processes and their integer and fractional part issues.
 \item \label{ph::mountain_down}\emph{Red avalanche from the peak.}\\
 After crossing the mountain, red starts sloping down to layers of vertices of smaller and smaller degree. Since it is still true that each vertex in a layer is connected to at least one vertex in one layer higher, this means that in each additional step, red paints
     \emph{all} the vertices in one layer lower. We call this the \emph{avalanche-phase of red}. (One can imagine
      this as red being a very careless climber who -- after crossing the peak of a mountain -- steps in the snow with a bucket of red paint and starts
      a huge painted avalanche.)
\item \label{ph::meeting}\emph{At the collision time.}\\
Now we turn our attention to the blue climber who does essentially the same as red except that it is slower:
after getting out of its local neighbourhood corresponding to the branching process, blue starts its mountain climbing phase as well.
Since it is slower, whp it will only reach some low layer of the degree-mountain when red starts its avalanche.
With this picture in hand, we can identify the maximal degree vertex eventually painted blue - this is the vertex in the highest layer blue can still reach.
 The idea of the proof is to determine the value $\ell$ such that during the total time $T_r+\ell$, blue has
 climbed up to the same layer as the red avalanche has sloped down to. Since red occupies \emph{every vertex} in a layer
 it reaches, it will necessarily bump into blue, who whp reaches only some vertices in that layer. This determines the time when red starts successfully blocking blue.
 \item \label{ph::after_meeting}\emph{Competing with the avalanche.}\\
 After the meeting time $T_r+\ell$, blue cannot go higher up on the mountain since red already occupies every
  vertex having degree higher than the maximal degree of blue.  Note that at this time most of the graph is still not reached by any color: we need to estimate the number of vertices that blue can still reach before the red avalanche closes up around the blue cluster. This is done in two steps: heuristically, every vertex that is close enough to a blue half-edge occupied at or before $T_r+\ell$ has a high chance to become blue later. Hence, first we calculate the size of the `optional cluster of blue', i.e.\ we calculate the size of the $k$-neighborhood of blue half-edges via path counting methods. The size of the optional cluster is convergent if $k\to\infty$: due to the presence of the red avalanche, the degrees in the blue paths get more and more restricted and finally the red avalanche reaches constant order vertices and then the procedure stops.

 It can still happen that some vertices in the optional cluster of blue are occupied by red simply because they are `accidentally' also close to some red vertex. Thus, in the second step we estimate the size of the intersection between the optional cluster of blue and the red cluster. The two steps together provide a matching upper and lower bound for the number of vertices that blue occupies after the intersection. This phase has a non-negligible impact on the order of magnitude of vertices painted blue since the constant $C_n$ in the exponent of \eqref{eq::thm-main} is influenced by this last phase.
  \end{enumeratei}

\subsubsection*{Notation}
We write $[n]$ for the set of integers $\{1,2,\dots, n\}$. We denote by the same name and add a superscript $(r), (b)$ to random variables, sets or other quantities belonging to the red and blue processes, respectively.  We write $E(\CMD)$ for the set of edges. For any set of vertices $S\subset [n]$, we write $N(S)$ for the set of their neighbors, i.e.,
\be \label{def::ns}N(S)=\{y\in [n]: \exists x\in S, (x,y) \in E(\CMD)\}.\ee
 For any event $A$, $\Pv_n(A):=\Pv(A| D_1, D_2, \dots, D_n)$. As usual, we write i.i.d.\ for independent and identically distributed, lhs and rhs for left-hand side and right-hand side. We write $\lfloor x\rfloor, \lceil x \rceil$ for the lower and upper integer part of $x\in \R$, and $\{x\}$ for the fractional part of $x\in \R$. Slightly misusing the notation, we use curly brackets around set elements, events and exponents as well.
 We say that a sequence of events $\mathcal E_n$ occurs with high probability (whp) when $\lim_{n\to \infty}\Pv(\mathcal E_n) = 1.$ In this paper, constants are typically denoted by $c$ in lower and $C$ in upper bounds (with possible indices), and their precise values might change even along lines. Typically, all the whp-events hold whp under the event $\{\CL_n\in [1/2 \Ev[D]n, 2 \Ev[D] n] \}.$

\section{The branching process phase}\label{sc::BP}
First we describe the exploration process of the local neighbourhood of a given vertex in order to relate it to a branching process.

The configuration model $\CMD$ (introduced in \cite{Boll80},  for more see \cite{Bollobas01, H10}) on $n$ vertices with i.i.d.\ degree distribution $D$ can be briefly described as follows:
for each vertex $i\in[n]$ we assign  an i.i.d. random variable $D_i\sim D$, and attach $D_i$ half-edges to that vertex.
If the total degree $\CL_n=\sum_{i=1}^n D_i$ is odd, then we add an extra half-edge to the vertex $n$.
Then we number the half-edges in an arbitrary way from $1$ to $\CL_n$,
and  start pairing them uniformly at random, i.e.\  we pick an arbitrary unpaired
half-edge and pair it to a uniformly chosen other unpaired half-edge to form an
edge. Once paired, we remove them from the set
of unpaired half-edges and continue the procedure until all half-edges are paired.
We call the resulting multi-graph $\CMD$. Since the choice of the half-edge to be paired is arbitrary,
we can start from any set of vertices, and explore their cluster simultaneously with the construction of the graph.
We call this procedure the \emph{exploration process}, which is a version of a Breadth First Search Algorithm on
the random graph $\CMD$. We describe the exploration process in more detail for the case when the initial set is a single uniformly chosen vertex $v\in [n]$, and relate it to a corresponding branching process as follows.

In each step of the exploration process, each vertex belongs to exactly one of three sets: it can be active ($\CA$), explored ($\CE$) or unexplored ($\CU$).
Initially $\CE_0=\varnothing$ and all vertices except $v$ are in $\CU_0$. We start setting the status of the initial vertex $v$ to active:
$\CA_0=\{v\}$, and we write $\CA_i$ for the set of active vertices after the $i$th step of the exploration.
In each step we pick a vertex $v_{i+1}$ from $\CA_i$ (we do this first-in-first-out way, i.e., we keep track of when a vertex enters the set $\CA$)
 and do three things: remove $v_{i+1}$ from $\CA_i$; add it to the explored vertices $\CE_i$;
 and put all its unexplored neighbors in the active set of vertices, i.e.,
\[ \ba \CA_{i+1}&:=\CA_{i}\setminus \{v_{i+1}\} \cup \{ N(v_{i+1})\cap \CU_i\},\\ \CU_{i+1}&:=\CU_i \setminus N(v_{i+1}), \ea\]
where $N(v_{i+1})$ denotes the neighbors of $v_{i+1}$ in $\CMD$. The explored vertices form the sequence $\CE_i=\{v=v_1, v_2, \dots, v_{i}\}$.

 Let $B_{i}$ stand for the \emph{forward-degree} of the vertex $v_{i}$ in the exploration process, so that $B_{i}=|N(v_{i})\cap \CU_i|$. We aim to determine the distribution of $B_i$. For this we note that in the construction of the random graph $\CMD$, an \emph{arbitrary} half-edge is chosen and paired to a uniformly chosen unpaired half-edge. Hence, we can do the construction of the graph together with the exploration process.
 Further, the probability of picking a half-edge which is belonging to a vertex with degree $j+1$ is proportional to $(j+1)f_{j+1}$, and as long as the size of the neighbourhood is small the probability that a vertex is connected to some vertex explored earlier vanishes. Hence, we get the size-biased distribution \eqref{def::size-biased1}
 as a natural candidate for the forward degrees of the vertices $v_i$ in the exploration process.
More precisely, we have the following result:
\begin{proposition}\label{prop::coupling}\emph{\cite[Proposition 4.7]{BHH10}}
There exists $0<\rho<1$ such that the random vector $\big(B_{i}\big)_{i=2}^{n^{\rho}}$ of forward degrees can be coupled
to an independent sequence of random variables $\big(\widetilde{B}_i\big)_{i=2}^{n^{\rho}}$ with probability
mass function given in \eqref{def::size-biased1} and $\big(B_{i}\big)_{i=2}^{n^{\rho}}=\big(\widetilde{B}_{i}\big)_{i=2}^{n^{\rho}}$ whp.
\end{proposition}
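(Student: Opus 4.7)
The plan is to construct an explicit coupling between the exploration process and an independent i.i.d.\ sequence $(\widetilde B_i)$ with pmf \eqref{def::size-biased1}, and to show that the coupling remains exact throughout the first $n^\rho$ exploration steps provided $\rho>0$ is small enough relative to $\tau-2$.

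First, I would set up the coupling mechanics. At step $i+1$ the exploration pairs each unpaired half-edge of $v_{i+1}\in\CA_i$ to a uniformly chosen other unpaired half-edge of $\CMD$. In parallel one can draw an \emph{idealised} half-edge uniformly at random from \emph{all} $\CL_n$ half-edges, with replacement, ignoring previous pairings; since the probability of landing on vertex $w$ is $D_w/\CL_n$, the idealised discovery follows the size-biased law and the resulting forward degree $\widetilde B_{i+1}\equalsd D^{\ast}-1$ has pmf \eqref{def::size-biased1}. The two picks can be realised on a common probability space so that they disagree only when the idealised pick falls on a half-edge that is either already paired or attached to a vertex of $\CE_i\cup\CA_i$; call this a \emph{bad pairing}.

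To bound the probability of any bad pairing, let $H_i$ be the number of half-edges belonging to vertices in $\CE_i\cup\CA_i$, and write $S_i:=\sum_{j=2}^{i}\widetilde B_j$. On the whp event $\{\CL_n\ge \tfrac12\Ev[D]n\}$ (which follows from $\tau>2$ and the law of large numbers for $\CL_n$), the conditional probability that any given pairing is bad is at most $H_i/\CL_n\le C H_i/n$, and $H_i\le D_{v_1}+i+S_i$. The total number of pairings performed during the first $n^\rho$ exploration steps is itself at most $H_{n^\rho}$, so a union bound yields the total failure probability
\be
\Pv(\text{coupling fails before step } n^\rho) \le C\,(n^\rho+S_{n^\rho})^{2}/n.
\ee

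The main analytic step is to control $S_{n^\rho}$. From \eqref{def::size-biased1} and \eqref{eq::F} we have $1-F^\ast(x)\asymp x^{-(\tau-2)}$ with $\tau-2\in(0,1)$, so $\widetilde B$ lies in the domain of attraction of a one-sided stable law of index $\tau-2$. Standard truncation-based tail estimates then give $S_k=\Op(k^{1/(\tau-2)})$, hence $S_{n^\rho}=\Op(n^{\rho/(\tau-2)})$. Substituting, the total failure probability is $\Op(n^{2\rho/(\tau-2)-1})$, which vanishes as soon as we fix $0<\rho<(\tau-2)/2$.

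The main obstacle is precisely this heavy tail: because $\Ev[D^\ast]=\infty$ for $\tau\in(2,3)$, the law of large numbers is unavailable for $S_k$ and one must rely on the stable-domain tail estimate to keep the cumulative depletion error $o(1)$. A secondary technical point is that a single exploration step actually involves $D_{v_{i+1}}$ successive pairings and that the shift by $-1$ in \eqref{def::size-biased1} must be accounted for (the back half-edge by which $v_{i+1}$ was reached is not counted as a forward degree); neither issue affects the argument because both are already absorbed into the cumulative half-edge count $H_i$ used in the union bound.
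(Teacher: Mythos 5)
Your depletion/cycle estimate is fine and is essentially the standard argument (and the one used in the reference the paper defers to: the paper itself does not prove this proposition but cites \cite[Propositions 4.5 and 4.7, Appendix A.2]{BHH10}): the cumulative number of half-edges attached to $\CE_i\cup\CA_i$ is controlled by $S_{n^{\rho}}=O_{\Pv}(n^{\rho/(\tau-2)})$ via the stable-law tail of $F^\ast$, and a union bound then forces $\rho<(\tau-2)/2$. However, there is a genuine gap at the very first step of your coupling. A half-edge drawn uniformly from all $\CL_n$ half-edges lands on a vertex of degree $j+1$ with conditional probability $(j+1)\,\#\{w:D_w=j+1\}/\CL_n$, i.e.\ it follows the \emph{empirical} size-biased law of the realized degree sequence, not the limiting law \eqref{def::size-biased1}. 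Since $D_1,\dots,D_n$ are random, this empirical pmf is itself random: conditionally on the degrees your idealised draws are i.i.d.\ from it, but unconditionally they are exchangeable, not independent, and their marginal is not $f^\ast$. So the assertion that $\widetilde B_{i+1}$ ``has pmf \eqref{def::size-biased1}'' does not hold as stated, and the proposition you are proving is precisely about an i.i.d.\ sequence from $f^\ast$.

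The missing ingredient is a second coupling step comparing the empirical size-biased law with $f^\ast$: one must show that $n^{\rho}\,d_{\mathrm{TV}}\bigl(\widehat f^{\ast,n},f^\ast\bigr)\to 0$ in probability, where $\widehat f^{\ast,n}_j=(j+1)\#\{w:D_w=j+1\}/\CL_n$, and then maximally couple each idealised draw to an $f^\ast$ draw. Because $\tau\in(2,3)$, this is not a routine $n^{-1/2}$ bound: the total variation distance is dominated by the heavy tail (the fluctuations of $\CL_n$ are of order $n^{1/(\tau-1)}$ and the tail mass above a cutoff $K$ is of order $K^{2-\tau}$), and a truncation argument gives $d_{\mathrm{TV}}=O_{\Pv}(n^{-\delta})$ only for some $\delta=\delta(\tau)>0$ (roughly $(\tau-2)/\tau$), which imposes an additional upper restriction on $\rho$ beyond $(\tau-2)/2$. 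This empirical-versus-limit comparison is exactly the second error source treated in the proof of \cite[Proposition 4.7]{BHH10} that the paper cites; with it added, your argument closes, but without it the proof does not establish the stated coupling.
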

\begin{proof}
See \cite[Proposition 4.5]{BHH10} and the proof of Proposition 4.7 in \cite[Appendix A.2]{BHH10}.
\end{proof}
 In our case, we have two source vertices red and blue with different spreading speed, thus, we need a slight modification of this proposition. Namely, we need that a similar coupling remains
 valid for two exploration processes from two uniformly chosen vertices up to the time when the red (first) color reaches size $n^\rho$.
 Let us temporarily denote the number of vertices occupied by blue (the other) color by this time by $h(n,\rho)$.  This coupling is similar to \cite[Proposition 4.8]{BHH10}, but we state it for the reader's convenience:

\begin{lemma}\label{lem::coupling-for-two}Fix $\la>1$. Let $T(n^{\rho}):=\inf\{t: |\CR_{t}| \ge n^\rho \text{ or }  |\CB_{t}| \ge n^\rho \}$.
Then there exists a $\rho>0$ such that $\mathcal{R}_{t(n^{\rho})}\cap \mathcal{B}_{t(n^{\rho})}=\varnothing$ whp, and the forward degrees in both the red and the blue process can be coupled to i.i.d. sequences 
$\big(\widetilde B_{i}^{\sss{(r)}}\big)_{i=2}^{n^{\rho}}$ and $\big(\widetilde B_{j}^{\sss{(b)}}\big)_{j=2}^{ h(n,\rho)  }$, 
where $h(n, \rho)$ is the random number of vertices reached by blue up to time $t(n^\rho)$. (The same statement holds true for $\la=1$, but in this case the color to reach $n^\rho$ vertices first is random.)
\end{lemma}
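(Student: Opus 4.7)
The plan is to extend Proposition \ref{prop::coupling} to the joint exploration of the local neighbourhoods of two uniformly chosen source vertices $\mathcal R_0,\mathcal B_0$. I would run a combined breadth-first exploration with two separate FIFO queues (one per colour), pairing half-edges one at a time in the graph-time order dictated by the two spreading speeds. Since the construction of $\CMD$ allows the pairing of half-edges to be done in any arbitrarily chosen order, the joint distribution of the resulting multigraph is unaffected, so we may intertwine the construction of $\CMD$ with the exploration just as in the proof of Proposition \ref{prop::coupling}. The stopping time is $T(n^{\rho}) = \inf\{t: |\CR_t|\vee |\CB_t|\ge n^{\rho}\}$, which (under $\la>1$) will whp be driven by red.

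The first technical step is to control $h(n,\rho)$, the random number of vertices blue has discovered at time $t(n^\rho)$. Here I would use a bootstrap: apply Proposition \ref{prop::coupling} first to red alone, which gives that red reaches $n^\rho$ after $k^r = \log\log n/|\log(\tau-2)| + O_{\sss\prob}(1)$ generations (using the almost-sure limit \eqref{def::Y}). In the same amount of graph-time, blue has performed at most $\lfloor k^r/\la\rfloor$ generations of its own exploration, so by the BP size bound $\log Z_k^{\sss(b)} \le C(\tau-2)^{-k}$ one gets $h(n,\rho) \le \exp\{C(\log n)^{1/\la}\}$ whp, which is subpolynomial in $n$ for any $\la>1$. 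This also shows in passing that the total graph-time and the total number of visited vertices in both explorations are both $O_{\sss\prob}(n^{\rho})$.

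The second step is to verify the coupling and disjointness simultaneously. At each pairing step, the probability that the chosen uniform half-edge attaches to an already-visited vertex is bounded by the ratio of the number of half-edges incident to currently visited vertices and $\CL_n - (\text{paired so far})$. On the whp event $\CL_n \in [\tfrac12\Ev[D]n, 2\Ev[D]n]$ and using the standard whp bound $D_{\max} \le n^{1/(\tau-1)+\vep}$, the total number of half-edges belonging to visited vertices is at most $n^\rho \cdot n^{1/(\tau-1)+\vep}$. A union bound over $O(n^\rho)$ pairings gives collision probability $O(n^{2\rho-1+1/(\tau-1)+\vep})=o(1)$ as long as $\rho < (\tau-2)/\bigl(2(\tau-1)\bigr)$, which I would fix from the start. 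On the complementary good event, the pairing of each explored half-edge is with a uniformly chosen half-edge among vertices \emph{not} yet seen, so the forward-degree distribution is exactly size-biased up to the $O(n^\rho/n)$ discrepancy already controlled in the proof of Proposition \ref{prop::coupling}, and it does so independently for both colours because the red and blue active half-edges never share an endpoint.

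The main obstacle will be the bootstrap/circularity issue: the bound on $h(n,\rho)$ uses the BP approximation for blue, which is what we are trying to establish. I would resolve this by a standard two-step coupling — couple red up to $n^\rho$ first (giving the stopping time $t(n^\rho)$), then condition on this and couple blue for the (deterministic, given $t(n^\rho)$) number of generations $\lfloor t(n^\rho)/\la\rfloor$, controlling the error by the same half-edge-collision estimate above. The only other delicate point is ensuring that the $D_{\max}$-type control survives until the coupling window closes, which follows from the standard whp bound $D_{(1)}\le n^{1/(\tau-1)+\vep}$ together with the observation that the BP approximation sees only $o(n^\rho)$ distinct vertices so does not typically encounter them.
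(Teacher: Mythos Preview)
Your argument is correct and your two-step resolution of the circularity (couple red to size $n^\rho$ first, then run blue for $\lfloor t(n^\rho)/\la\rfloor$ generations) is exactly the sequencing the paper uses. The one genuine difference is in \emph{how} the second step is hooked up to Proposition~\ref{prop::coupling}. The paper does not redo any collision estimate: after red has reached $n^\rho$ vertices it attaches an \emph{imaginary edge} from the last explored red vertex to the blue source $\mathcal B_0$, drops all other active vertices, and simply continues the \emph{same} exploration from $\mathcal B_0$. Both colours thus become part of a single exploration tree of total size at most $n^\rho + h(n,\rho)\le 2n^\rho$, and Proposition~\ref{prop::coupling} applies verbatim (with $\rho$ replaced by a slightly larger value). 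Disjointness is then stated as a consequence of both clusters having size $O(n^\rho)$.

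Your route instead keeps the two explorations separate and controls both the coupling error and the disjointness by the explicit half-edge collision bound using $D_{\max}\le n^{1/(\tau-1)+\vep}$, yielding the concrete threshold $\rho<(\tau-2)/\bigl(2(\tau-1)\bigr)$. This is more laborious but also more transparent: it produces an explicit $\rho$ and makes the ``they do not meet'' claim quantitative, whereas the paper leaves both implicit. Either approach is fine here; the imaginary-edge trick buys brevity, your approach buys an explicit constant.
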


\begin{proof}[Proof of Lemma \ref{lem::coupling-for-two}]

 First assume that $\la>1$:  consider the coupling given by Proposition \ref{prop::coupling} with forward degrees $\left(B_i^{\sss{(r)}}\right)_{i=2}^{n^\rho}$ for the red cluster. After this, connect the $n^\rho$-th chosen vertex
 to the blue source vertex $\CB_0$  with an imaginary edge. Then drop all the other active vertices from $\CA_{T(n^\rho)}$ and re-start the exploration process with only vertex $\CB_0$ being active.
 Since it takes time $\la$ to cover an edge for blue, up to time $T(n^\rho)$ blue reaches all the vertices which have graph distance at most $\lfloor T(n^\rho)/\la \rfloor$ from the source vertex $\CB_0$.
  Thus, continue the exploration process from the blue source up to finishing generation $\lfloor T(n^\rho)/\la \rfloor$. Since $\la >1 $, the total number of vertices found by this second phase
  has smaller order than $n^\rho$, so that the coupling still remains valid. Moreover, since each of the clusters have only at most $n^\rho$ many vertices, with high probability they do not meet each other.
Further, when $\la=1$, the proof is the same, the first cluster to reach $n^{\rho}$ vertices takes the role of red, and the other one takes the role of blue.
\end{proof}

An immediate consequence of Proposition \ref{prop::coupling} and Lemma \ref{lem::coupling-for-two} is that locally we can consider the growth
 of $\mathcal{R}_{t}$ and $\mathcal{B}_t $ as independent branching processes $(Z_{k})_{k>0}$  with offspring distribution $F^\ast$ for the second and further generations, and with offspring distribution given by $F$ for the first generation.

Let us now investigate the growth of these branching processes.
Since $\tau\in (2,3)$, the offspring distribution of this branching process has infinite mean for every individual in the second and larger generations.
 To understand the behavior of this BP, we first look at what happens in a BP where all the degrees are distributed as $F^\ast$, including the first generation.

The following theorem by Davies \cite{D78} describes the growth rate of such a branching process:

\begin{theorem}[Branching process with infinite mean \cite{D78}]\label{thm::davies} Let $\wit Z_k$ denote the $k$-th generation of a branching process with offspring distribution given by the distribution function $F^\ast$. Suppose there exists an $x_{0}>0$ and a function $x\mapsto\gamma(x)$ on $\R^+$ that satisfies the following conditions:
\begin{enumeratei}
\item  $\gamma(x)$ is non-negative and non-increasing,
\item $x^{\gamma(x)}$ is non decreasing,
\item $\int\limits_0^\infty\gamma\left(\mathrm e^{\mathrm e^x}\right)\mathrm d x<\infty$.
\end{enumeratei}
 Let us assume that for some $\tau\in(2,3)$, the tail of the offspring distribution satisfies that, for all $x\ge x_0$,
\begin{equation}\label{eq::davies_cond}
    x^{-(\tau-2)-\gamma(x)}\leq 1-F^\ast(x)\leq x^{-(\tau-2)+\gamma(x)}.
\end{equation}
Then $(\tau-2)^{k}\log(\wit{Z}_{k}\vee 1)$ converges almost surely to a random variable $\wit Y$. Further, the variable $\wit Y$ has exponential tails. \end{theorem}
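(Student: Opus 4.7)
The plan is to establish almost sure convergence of $U_k := (\tau-2)^k \log(\wit Z_k \vee 1)$ by showing that $\sum_k |U_{k+1}-U_k| < \infty$ almost surely. The driving heuristic: $F^\ast$ has a regularly varying tail of index $\tau-2 \in (0,1)$ (up to the $\gamma$-correction in \eqref{eq::davies_cond}), so the sum of $N$ i.i.d.\ copies is of the same order as their maximum, which is itself of order $N^{1/(\tau-2)}$. Hence $\log \wit Z_{k+1} \approx (\tau-2)^{-1}\log \wit Z_k$, which is precisely the statement $U_{k+1}\approx U_k$. If $f^\ast_0 > 0$, on the extinction event $U_k=0$ eventually and the claim is trivial, so I work on the survival event throughout.

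Conditional on $\wit Z_k = N$, let $M_k$ denote the maximum of the $N$ i.i.d.\ offspring $X_1^{(k)},\dots,X_N^{(k)}\sim F^\ast$. A standard extreme-value computation using \eqref{eq::davies_cond} yields, with conditional probability $1-O(N^{-c})$ for some $c>0$,
\[
N^{1/(\tau-2+\gamma(N))}(\log N)^{-2} \;\leq\; M_k \;\leq\; \wit Z_{k+1} \;\leq\; N\, M_k \;\leq\; N^{1+1/(\tau-2-\gamma(N))}(\log N)^{2},
\]
where the middle chain just uses that $M_k\leq \sum X_i^{(k)} \leq N M_k$. Taking logarithms, multiplying by $(\tau-2)^{k+1}$, and substituting $\log N = U_k(\tau-2)^{-k}$, one obtains
\[
|U_{k+1}-U_k| \;\leq\; C\,\gamma(\wit Z_k)\,|U_k| \;+\; C\,(\tau-2)^k\,\log\log(\wit Z_k \vee 3).
\]
The second term is deterministically summable; the first term demands $\sum_k \gamma(\wit Z_k) < \infty$ a.s.

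The main obstacle lies in this last requirement, which is apparently circular: controlling $\gamma(\wit Z_k)$ needs doubly exponential growth of $\wit Z_k$, which is essentially what we are trying to prove. I would resolve this via a bootstrap. Using only the lower bound in \eqref{eq::davies_cond} together with a crude Borel--Cantelli argument, first show that $\log\log \wit Z_k \geq c k$ for all large $k$ almost surely on survival. Then the substitution $x=\log\log y$ rewrites condition (iii) as $\int_1^\infty \gamma(y)\,d\log\log y < \infty$; combined with monotonicity (i), this turns into $\sum_k \gamma(e^{e^{c k}})<\infty$, so $\sum_k \gamma(\wit Z_k) < \infty$ a.s.\ and $U_k\to \wit Y$ a.s.\ follows. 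For the exponential tail of $\wit Y$, write $\wit Y - U_0 = \sum_{j\geq 0}(U_{j+1}-U_j)$ and show each increment admits a uniform exponential tail: the polynomially small failure probabilities in the extreme-value bound above, combined with the (now-established) doubly exponential growth of $\wit Z_k$, yield geometrically-in-$j$ decaying tail bounds on $|U_{j+1}-U_j|$ that sum to give $\Ev[e^{t\wit Y}]<\infty$ for some $t>0$. A cleaner alternative is to exploit the max-stable distributional fixed-point equation $\wit Y \stackrel{d}{=} (\tau-2)\, \max_{i\leq \xi}\wit Y^{(i)}$ (with $\xi\sim F^\ast$ and $\wit Y^{(i)}$ i.i.d.\ copies of $\wit Y$) that arises in the $k\to\infty$ limit, and to iterate it to upgrade a polynomial tail bound into an exponential one.
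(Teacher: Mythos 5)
The paper does not prove this statement: it is imported verbatim from Davies \cite{D78}, and the only thing checked in the text is that $F^\ast$ satisfies hypothesis \eqref{eq::davies_cond} (see \eqref{eq::davies_appliedto_G}). So your proposal must stand on its own; its architecture (show $U_k=(\tau-2)^k\log(\wit Z_k\vee 1)$ is a.s.\ Cauchy via a max/sum comparison in each generation, bootstrap doubly exponential growth, then use condition (iii) through $\sum_k\gamma(\e^{\e^{ck}})<\infty$) is indeed the standard route. The gap is in the central increment estimate. Your upper bound uses only $\wit Z_{k+1}\le N M_k$, and the factor $N$ is not harmless on the scale you work at: conditionally on $\wit Z_k=N$, taking logarithms and multiplying by $(\tau-2)^{k+1}$, it contributes the additive term $(\tau-2)^{k+1}\log N=(\tau-2)U_k$, so what your displayed chain actually yields is
\[
U_{k+1}-U_k\;\le\;\Bigl((\tau-2)+\tfrac{\gamma(N)}{\tau-2-\gamma(N)}\Bigr)U_k+2(\tau-2)^{k+1}\log\log N,
\]
not $|U_{k+1}-U_k|\le C\gamma(\wit Z_k)|U_k|+C(\tau-2)^k\log\log(\wit Z_k\vee 3)$. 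With an error of size $(\tau-2)U_k$ you cannot even conclude that $U_k$ stays bounded (the recursion permits growth by a factor close to $\tau-1>1$ per generation), let alone summability of increments; only the lower-bound half of your chain produces an error of order $\gamma(N)U_k$. To repair it you must prove the quantitative form of your own heuristic that the sum is of the order of the maximum when the tail index $\tau-2$ is in $(0,1)$: e.g.\ truncate at $T=N^{(1+\epsilon_N)/(\tau-2-\gamma(N))}$, bound $\Pv(M_k>T)$ by a union bound and $\sum_i X_i\ind_{\{X_i\le T\}}$ by Markov's inequality using $\Ev[X\ind_{\{X\le T\}}]=O(T^{3-\tau+\gamma(T)})$, giving $\wit Z_{k+1}\le N^{(1+o(1))/(\tau-2-\gamma(N))}$ whp; the lossy prefactor has to be $N^{o(1)}$, not $N$. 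A smaller inaccuracy: with your $(\log N)^{\pm2}$ corrections the failure probability of the upper bound is only $O((\log N)^{-c})$, not $O(N^{-c})$ — still summable in $k$ once doubly exponential growth is in force, but not what you claim, and it weakens the tail argument below.

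The second genuine shortfall is the claim that $\wit Y$ has exponential tails, which the proposal only gestures at. The first route fails as described: the events on which your increment bounds hold have failure probabilities that are small constants, not quantities decaying in $y$, so they cannot be absorbed into a bound of the form $C\e^{-cy}$, and on their complements you control the increments only through the crude recursion above. The second route, iterating $\Pv(\wit Y>y)\le\Pv(\xi>T)+T\,\Pv\bigl(\wit Y>y/(\tau-2)\bigr)$ from the fixed-point identity, is not self-closing either: with $\Pv(\xi>T)\asymp T^{-(\tau-2)}$ and the natural choice $T_j=\e^{\theta y_j}$, $y_j:=y(\tau-2)^{-j}$, the accumulated prefactor $\prod_{i<j}T_i$ is of order $\exp\{\theta y_j(\tau-2)/(3-\tau)\}$, which beats the gain $\exp\{-\theta(\tau-2)y_j\}$ for every $\tau\in(2,3)$; choosing faster-growing thresholds leaves a terminal remainder that requires an a priori upper tail bound for $\wit Y$, and no such bound is produced. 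So the a.s.\ convergence part is repairable along your lines, but the exponential-tail assertion needs a genuinely different argument (e.g.\ recursive moment or tail estimates uniform in $k$, or analysis of the limiting functional equation), and as written it is not proven.
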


To be able to apply this theorem to our setting, we need to show that the distribution function $F^\ast$ satisfies the condition \eqref{eq::davies_cond}. This is clearly the case since using the elementary re-arrangement of weights
\[ 1-F^\ast(x) = \sum_{j=x+1}^\infty \frac{(j+1)\Pv(D\!=\!j)}{\Ev[D]} = \frac{1}{\Ev[D]} \Big( (x+2)[1-F(x+1)] + \sum_{j=x+2}^{\infty} [1-F(j)]\Big),\]
combined with the bounds in \eqref{eq::F} and elementary estimates immediately yields that there exist constants $0<c_1^\ast\le C_1^\ast<\infty$, such that for $x$ large enough
\begin{equation}\label{eq::davies_appliedto_G}
    x^{-(\tau-2)} c_1^\ast \leq 1-F^\ast(x)\leq x^{-(\tau-2)} C_1^\ast.
\end{equation}
Since $\Pv(D\ge2)=\Pv(B\ge1)$, these BP-s cannot die out, i.e., we can write $\log \wit Z_k$ instead of $\log (\wit Z_k \vee 1 )$ and apply Davies' theorem  to obtain the a.s. convergence
 of $\wit{Y}_{k}=(\tau-2)^{k}\log(\wit{Z}_{k})$ to a random variable $\wit{Y}$. Recall that the degree of the first vertex in the exploration process is distributed as $F$ not $F^\ast$, hence we denote by $Z_k$ the corresponding BP and call it the \emph{delayed branching process}.
 The next lemma identifies the distribution of the limit of the properly scaled delayed branching process.
 We also identify the limit random variable $Y$ in terms of $\widetilde Y$.
 
\begin{lemma}\label{lem::convergence-non-delayed}
Let $Y$ be the limiting random variable $\lim\limits_{k\to\infty}(\tau-2)^{k}\log{Z_{k}}$ of the delayed BP. Then $Y$ satisfies the distributional identity
\be\label{eq::Y=max Y_{i}}
 Y \  {\buildrel {d}\over{\equiv}}\ (\tau-2)\max_{1\leq i\leq D	}\wit Y^{(i)},
\ee
where $\wit Y^{(i)}$ are i.i.d. copies of the limiting random variable of the original non-delayed BP. Further, \[ \wit Y {\buildrel {d}\over{\equiv}} (\tau-2) \max_{1\le i< D^*} \wit Y^{(i)}. \]
 \end{lemma}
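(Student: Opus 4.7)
The plan is to decompose $Z_k$ according to the children of the root and then identify the limit by a squeeze between the maximum and $D$ times the maximum of the subtree sizes. By definition the root of the delayed BP has $D\sim F$ offspring, and each subtree rooted at a child of the root is, because every individual from generation one onward produces $F^\ast$ offspring, itself distributed as an independent non-delayed BP. This yields the distributional identity
\[
Z_k \ \stackrel{d}{=}\ \sum_{i=1}^{D} \widetilde{Z}_{k-1}^{(i)},
\]
where $\big(\widetilde{Z}_{k-1}^{(i)}\big)_{i\ge 1}$ are i.i.d.\ copies of the non-delayed BP at generation $k-1$, independent of $D$.

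The next step is to apply the elementary sandwich
\[
\max_{1\le i\le D} \widetilde{Z}_{k-1}^{(i)} \ \le\  \sum_{i=1}^{D} \widetilde{Z}_{k-1}^{(i)} \ \le\  D\cdot\max_{1\le i\le D} \widetilde{Z}_{k-1}^{(i)}.
\]
Taking logarithms and multiplying by $(\tau-2)^k=(\tau-2)\cdot(\tau-2)^{k-1}$, this becomes
\[
(\tau-2)\max_{1\le i\le D}(\tau-2)^{k-1}\log\widetilde{Z}_{k-1}^{(i)} \ \le\ (\tau-2)^k\log Z_k \ \le\ (\tau-2)^k\log D + (\tau-2)\max_{1\le i\le D}(\tau-2)^{k-1}\log\widetilde{Z}_{k-1}^{(i)}.
\]
Davies' theorem, applicable via the tail estimate \eqref{eq::davies_appliedto_G}, gives $(\tau-2)^{k-1}\log\widetilde{Z}_{k-1}^{(i)} \to \widetilde{Y}^{(i)}$ almost surely for each fixed $i$. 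Since $D$ is almost surely finite, the maximum over the random but a.s.\ finite index set $\{1,\dots,D\}$ converges almost surely to $\max_{1\le i\le D}\widetilde{Y}^{(i)}$, and the correction $(\tau-2)^k\log D\to 0$ almost surely because $\log D$ is a.s.\ finite and $\tau-2\in(0,1)$.

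By the squeeze theorem, $(\tau-2)^k\log Z_k \to (\tau-2)\max_{1\le i\le D}\widetilde{Y}^{(i)}$ almost surely, which simultaneously proves the existence of $Y$ and establishes the first claimed identity (in fact in the stronger almost-sure sense). For the second identity, the same decomposition applied to $\widetilde{Z}_k$ gives $\widetilde{Z}_k\stackrel{d}{=}\sum_{i=1}^{B}\widetilde{Z}_{k-1}^{(i)}$, where $B\sim F^\ast$ is the number of children of the root of the non-delayed BP (so $B=D^*-1$ in the notation of the lemma); the identical sandwich argument yields $\widetilde{Y}\stackrel{d}{=}(\tau-2)\max_{1\le i\le B}\widetilde{Y}^{(i)}$, matching the claim since $i<D^*$ means $1\le i\le D^*-1=B$. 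I expect no real obstacle: the only delicate point is that the maximum is taken over a random but a.s.\ finite set, which causes no trouble because on a probability-one event $D$ is fixed and finite, so only finitely many a.s.-convergent subtrees need to be combined.
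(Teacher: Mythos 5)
Your proposal is correct and follows essentially the same route as the paper's proof: decomposing $Z_k$ at the root into i.i.d.\ non-delayed subtrees, sandwiching the sum between the maximum and $D$ times the maximum, applying Davies' theorem to each subtree, noting $(\tau-2)^k\log D\to 0$, and exchanging the limit with the maximum over the a.s.\ finite index set, with the second identity handled analogously. No gaps.
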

\begin{remark}\normalfont An elementary calculation using \eqref{eq::Y=max Y_{i}} shows that $Y$ also has exponential tails with a parameter that is $(\tau-1)$ times the parameter of $\wit Y$.
\end{remark}
\begin{proof}[Proof of Lemma \ref{lem::convergence-non-delayed}]
Since the number of offsprings in the first generation is distributed as $D$, by the branching property the subtrees starting from the first generation up to level $k$
   are distributed as $\wit Z_{k-1}$ and are independent of each other. Thus, for every $k\ge 1$,
\be\label{eq::Zn_partition}
   Z_{k} \   {\buildrel {d}\over{\equiv}}\   \sum _{i=1}^{D}\wit{Z}_{k-1}^{(i)}
\ee
where $ \wit{Z}_{k-1}^{(i)}$ are i.i.d. copies of $\wit{Z}_{k-1}$.  Hence
\be\label{eq::sum_max*}
Y  =\lim_{k \to \infty}(\tau-2)^{k}\log(Z_{k}) =\lim_{k \to \infty}(\tau-2)^{k}\log\left(\sum_{i=1}^{D}\wit{Z}_{k-1}^{(i)}\right) \ee
We can bound the right hand side from both sides:
\be\label{eq::sum_bound}
(\tau-2)^{k}\log\left(\max_{i=1, \dots, D}\wit{Z}_{k-1}^{(i)}\right)\leq(\tau-2)^{k}\log\left(\sum_{i=1}^{D}Z_{k-1}^{(i)}\right)\leq(\tau-2)^{k}\log\left(\!D\!\cdot\! \max_{i=1,\dots,D}\wit{Z}_{k-1}^{(i)}\right).
\ee
Clearly $(\tau-2)^{k}\log D \overset{\Pv}{\to}0$, and by monotonicity we can exchange $\log$ and $\max$
and use Theorem \ref{thm::davies} for the convergence of $(\tau-2)^{k-1} \log (\widetilde Z_{k-1}^{(i)})$. Thus combining \eqref{eq::sum_max*} with \eqref{eq::sum_bound} yields
\be
Y=\lim_{k \to \infty}\max_{i=1,\dots, D} (\tau-2) \wit Y_{k-1}^{(i)}.
\ee
Exchanging the limit with the maximum finishes the proof. The second statement of the lemma can be proved analogously.
\end{proof}


\section{Mountain-climbing phase}\label{sc::climbup}
In this section we describe the mountain-climbing phase.
From now on we will concentrate on the growth of the \emph{red} (the faster) cluster, but the very same methods will later be used for blue as well. Thus, in this section we neglect the superscript $(r)$, and temporarily every quantity is belonging to the red cluster. We denote the set of red vertices at time $t$ by $\mathcal R_{t}$ and its size by $R_t$.
Since Proposition \ref{prop::coupling} only guarantees the coupling as long as the total number of explored vertices by red is at most $n^\vr$ for some $\vr>0$, let us first set some $\vr'<\vr (\tau-2)^2$ and define
\[ t(n^{\vr'})=\inf \{k:  Z_k  \ge n^{\vr'} \}.\]
Note that by Lemma \ref{lem::coupling-for-two}, and the fact that the total size of earlier generations are whp negligible compared to the last generation, $t(n^\rho)=T(n^{\vr'})$ whp.
Recall Definition \ref{def::limit-variables}, i.e.,
\be \label{def::Y_r*}\Yrn:=(\tau-2)^{t(n^{\vr'})}\log Z_{t(n^{\vr'})}. \ee
Note that $t(n^{\vr'})$ and thus $\Yrn$ is depending on $n$.
Then, an easy calculation yields that, with $\{x\}=x-\lfloor x \rfloor$,
\be\label{eq::an2} t(n^{\vr'}) = \frac{\log(\vr'/\Yrn) + \log\log n}{|\log(\tau-2)|}+ 1-a_n,\ee
where
 \be\label{eq::an} a_n=  \left\{ \frac{\log(\vr'/\Yrn) + \log\log n}{|\log(\tau-2)|}\right\}.  \ee
Note that $1-a_n$ is there to make the expression on the rhs of $t(n^{\vr'})$ equal to its upper integer part. Due to this effect, the last generation has a bit more vertices than $n^{\vr'}$, so let us introduce the notation $\vr''$ for the random exponent of the overshoot
\be \label{eq::rho_0} Z_{t(n^{\vr'})}= n^{ \vr' (\tau-2)^{a_n-1} }:= n^{\vr''},\ee
We get this expression by rearranging \eqref{def::Y_r*} and using the value $t(n^{\rho'})$ from \eqref{eq::an2}.
 The property $\vr'< \vr(\tau-2)^2$ guaranties that the coupling is still valid, i.e.\ we can also couple the \emph{degrees of vertices} in the $t(n^{\vr'})$th generation of the branching process to i.i.d.\ size biased degrees.

After time $t(n^{\vr'})$, we stop the coupling and focus on the graph: we start decomposing the graph to the following  nested sets of vertices, that we call \emph{layers}:
\be\label{def::Gamma_i} \Gamma_i:=\{ v: D_v>u_i \}, \ee
where $u_i$ is defined recursively by
\be\label{eq::ui_recursion} u_{i+1} =\left(\frac{u_{i}}{C\log n}\right)^{1/(\tau-2)}, \quad u_0:= \bigg(\frac{n^{\vr''}}{C\log n}\bigg)^{1/(\tau-2)} \ee
for a large enough constant $C>0$. We will see below that e.g. $C= 8/c_1$ is sufficient, where $c_1$ is from \eqref{eq::F}.
It is not hard to see that
\be\label{def::ui} u_{i}= n^{\vr'' (\tau-2)^{-(i+1)}} (C\log n)^{-e_i}\quad \mbox{with} \quad
 e_i =  \frac{1}{3-\tau}\bigg( \Big( \frac{1}{\tau-2}\Big)^{i+1}-1\bigg). \ee
Note that since $(\tau-2)^{-1}>1$, $u_i$ is growing, hence $\Gamma_0 \supset \Gamma_1 \supset \Gamma_2 \supset \dots$.
First we need to show that $Z_{t(n^{\vr'})}$ has a nonempty intersection with the initial layer $\Gamma_0$, and then we will build a path through the layers.
The following lemma is a general lemma about the maximum of i.i.d. power-law random variables. It guarantees that $\mathcal R_{t(n^{\vr'})}\cap \Gamma_0 \neq \varnothing$, and will also be repeatedly used to determine the maximum degree in a set of vertices:
\begin{lemma}\label{lem::maxdegree}
Let $X_i, \ i=1, \dots, m$ be i.i.d.\  random variables  with power-law tail exponent $\alpha$, i.e. the distribution function of $X_i$ satisfies \eqref{eq::F} with $\tau-1$ replaced by any $\alpha>0$.
Then there exists a random variable $\xi_\alpha$ with
\[ \max_{i=1,\dots, m} X_i/(m^{1/\alpha}) \buildrel {d}\over{\longrightarrow}  \xi_\alpha,  \]
and for $K>0$,
\be\label{eq::logwhp} \Pv\bigg(\max_{i=1,\dots, m} X_i < \Big(\frac{ m}{K\log n}\Big)^{1/\alpha} \bigg) \le \frac{1}{n^{c_1K}}, \ee
where $c_1$ arises from \eqref{eq::F}.
\end{lemma}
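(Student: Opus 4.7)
The plan is to exploit the identity $\Pv(\max_{i\le m} X_i \le y) = F_X(y)^m$ combined with the two-sided polynomial tail bound assumed on the $X_i$. Both parts of the lemma then reduce to plugging in an appropriate $y = y(m,n)$ and applying the elementary inequality $(1-p)^m \le e^{-pm}$ valid for $p\in[0,1]$ (together with its matching lower bound for the first claim).

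For the first claim, fix $x > 0$ and take $y = m^{1/\alpha} x$. The tail hypothesis on $X_i$ gives $1 - F_X(y) \in [c_1 m^{-1} x^{-\alpha},\, C_1 m^{-1} x^{-\alpha}]$ for all $m$ large. Raising to the $m$-th power and letting $m\to\infty$ then squeezes
\[ \exp\!\big(-C_1 x^{-\alpha}\big) \;\le\; \liminf_{m\to\infty} \Pv\!\Big(\tfrac{\max_i X_i}{m^{1/\alpha}}\le x\Big) \;\le\; \limsup_{m\to\infty} \Pv\!\Big(\tfrac{\max_i X_i}{m^{1/\alpha}}\le x\Big) \;\le\; \exp\!\big(-c_1 x^{-\alpha}\big). \]
This yields tightness of $\max_i X_i / m^{1/\alpha}$ and forces every distributional subsequential limit $\xi_\alpha$ to lie between two Fr\'echet laws; selecting such a limit gives the first assertion.

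For the second claim, set $y_n := (m/(K\log n))^{1/\alpha}$, so that $y_n^{-\alpha} = K(\log n)/m$. The lower tail bound then gives $1 - F_X(y_n) \ge c_1 K(\log n)/m$, and the elementary inequality above yields
\[ \Pv\!\Big(\max_{i\le m} X_i < y_n\Big) \;=\; F_X(y_n)^m \;\le\; \Big(1-\tfrac{c_1 K\log n}{m}\Big)^{\!m} \;\le\; \exp(-c_1 K\log n) \;=\; n^{-c_1 K}, \]
which is exactly \eqref{eq::logwhp}. The side condition $p\le 1$, i.e.\ $c_1 K\log n \le m$, is harmless because in every invocation of the lemma $m$ will be at least polynomial in $n$.

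The main obstacle, if one exists at all, is purely cosmetic and confined to the first claim: under the two-sided bound \eqref{eq::F} the limit law $\xi_\alpha$ is determined only up to the tail constant, so strictly speaking one must either pass to a subsequence or strengthen the hypothesis to an exact tail asymptotic $(1-F_X(y))\,y^\alpha \to c$. Since every downstream use of the lemma in the paper relies only on the quantitative bound of the second claim, this subtlety has no impact on the remainder of the argument.
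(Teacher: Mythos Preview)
Your argument is correct and is precisely the elementary calculation the paper omits: both claims follow from $\Pv(\max_i X_i\le y)=F_X(y)^m$ together with the two-sided tail bound and $(1-p)^m\le e^{-pm}$. Your observation that the two-sided bound \eqref{eq::F} alone does not pin down a unique limit $\xi_\alpha$ (so that the first claim strictly speaking yields only subsequential convergence) is accurate and sharper than what the paper records; as you note, only the quantitative bound \eqref{eq::logwhp} is used downstream, so this has no bearing on the rest of the paper.
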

\begin{proof}Elementary calculation.
\end{proof}
Note that the distribution $F^\ast$ satisfy the condition of the lemma with $\al=\tau-2$, see \eqref{eq::davies_appliedto_G}. So, we can apply this lemma (specially \eqref{eq::logwhp}) in the following setting: the i.i.d.\ variables $X_i$ are the forward degrees  $(B_i)_{i=1, \dots, Z_{t(n^{\vr'})}}\sim F^\ast$ in the last generation of the branching process, thus $m:=Z_{t(n^{\vr'})}=n^{\vr''}$ and $\alpha=\tau-2$. Note that the bound we get when applying  \eqref{eq::logwhp} states that whp there is at least one vertex with degree at least $u_0$ (defined in \eqref{eq::ui_recursion}).
Hence,  we get that $\Gamma_0\cap \mathcal R_{t(n^{\vr'})}\neq \varnothing$ whp.

We will repeatedly use concentration of binomial random variables of the following form
\begin{lemma}[Concentration of binomial random variable]\label{lem::chernoff}
Let $X$ be a binomial random variable with parameters $n, p_n$.
Then
\be \ba \Pv(X \ge 2 \Ev[X]) &\le \exp\{ - \Ev[X]/8\}, \\
\Pv(X \le 1/2 \Ev[X])&\le \exp\{ - \Ev[X]/8\}. \ea \ee
\end{lemma}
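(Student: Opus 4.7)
The plan is to apply the standard Chernoff (exponential moment) bound to $X$, writing $X=\sum_{i=1}^n X_i$ as a sum of i.i.d.\ Bernoulli$(p_n)$ variables. Set $\mu:=\Ev[X]=np_n$. The key input is the MGF bound
\[ \Ev[e^{tX}]=(1-p_n+p_n e^t)^n\le \exp\{np_n(e^t-1)\}=\exp\{\mu(e^t-1)\},\qquad t\in\R, \]
obtained by applying $1+x\le e^x$ to each factor.

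For the upper tail I would apply Markov's inequality to $e^{tX}$ with $t=\log 2$:
\[ \Pv(X\ge 2\mu)\le e^{-2t\mu}\,\Ev[e^{tX}]\le \exp\{-\mu(2\log 2-1)\}\le \exp\{-\mu/3\}\le \exp\{-\mu/8\}, \]
which is the claimed bound (in fact slightly stronger). For the lower tail, the standard multiplicative Chernoff form
\[ \Pv(X\le (1-\de)\mu)\le \exp\{-\mu\, \phi(\de)\},\qquad \phi(\de):=(1-\de)\log(1-\de)+\de, \]
follows by taking $t=\log(1-\de)<0$ in Markov applied to $e^{tX}$; since $\phi(0)=\phi'(0)=0$ and $\phi''(\de)=1/(1-\de)\ge 1$ on $[0,1)$, Taylor's theorem gives $\phi(\de)\ge \de^2/2$. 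Specialising to $\de=1/2$ yields $\Pv(X\le \mu/2)\le \exp\{-\mu/8\}$, matching the second claim exactly.

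There is no substantive obstacle here: both inequalities are classical Chernoff bounds and the only point to monitor is whether the constant $1/8$ actually survives in both tails. It does, because the lower tail (governed by $\de^2/2$ evaluated at $\de=1/2$) is the binding case, while the upper tail produces the strictly better constant $2\log 2-1>1/3$. Alternatively, one could simply cite a standard reference such as \cite{H10}.
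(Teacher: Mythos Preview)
Your proof is correct and aligns with the paper's treatment: the paper simply writes ``Follows from standard estimates, see e.g.\ \cite[Theorem 2.19]{H10} or \cite{Hage90}'' without spelling out any details, so your explicit exponential-moment computation is a faithful (and more informative) expansion of exactly the argument being cited. Your closing remark about citing \cite{H10} is in fact precisely what the paper does.
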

\begin{proof} Follows from standard estimates, see e.g. \cite[Theorem 2.19]{H10} or \cite{Hage90} \end{proof}

In what follows, we will build a path from $\Gamma_0\cap \mathcal R_{t(n^{\vr'})}$ to the highest-degree vertices through successive layers $\Gamma_i$.
The following lemma guarantees the existence of such a path. Recall that $N(S)$ stands for the neighbors of the set $S$ in $\CMD.$
\begin{lemma}\label{lem::gamma_i_connectivity}With $u_i$ and $\Gamma_i$ defined as in \eqref{def::ui} and \eqref{def::Gamma_i}, for \emph{every} $v\in \Gamma_i$, whp there is a vertex $w\in \Gamma_{i+1}$, such that $(v,w)\in E(\CMD)$. Shortly, 
\[ \Gamma_i \subset N(\Gamma_{i+1}) \quad whp.\]
Furthermore, the previous statement can be applied \emph{repeatedly} to build a path from $\Gamma_0$ to $\Gamma_{i}$ as long as $u_i=o(n^{1/(\tau-1)})$, which is
\be\label{eq::ibound}i < -\frac{\log((\tau-1)\vr'')}{|\log(\tau-2)|}.\ee
\end{lemma}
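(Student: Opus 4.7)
\emph{Proof proposal.} The plan is to fix $v\in\Gamma_i$, bound the probability that none of $v$'s $D_v>u_i$ half-edges get paired to a half-edge of $\Gamma_{i+1}$, and then union-bound over $v\in\Gamma_i$. The recursion \eqref{eq::ui_recursion} is tuned precisely so that each single vertex has expected $\Theta(\log n)$ neighbors in the next layer, which is enough to survive the union bound over a layer of polynomial size. First I would estimate the total number of half-edges $H_{i+1}:=\sum_{w\in\Gamma_{i+1}}D_w$ attached to $\Gamma_{i+1}$. Since the $D_w$ are i.i.d.\ with tail \eqref{eq::F} and $\tau\in(2,3)$, partial summation gives $\Ev[D\,\ind\{D>u\}]\ge c\,u^{2-\tau}$ for $u$ large. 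Hence $\Ev[H_{i+1}]\ge c\,n\,u_{i+1}^{2-\tau}$, and a Chernoff/Bernstein-type bound (after truncating each $D_w$ at a polynomial scale in $n$ and paying the resulting negligible error) yields $H_{i+1}\ge \tfrac{c}{2}\,n\,u_{i+1}^{2-\tau}$ and $\CL_n=\Theta(n)$ whp, uniformly in $i=O(\log\log n)$.

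Now fix $v\in\Gamma_i$ and condition on the degree sequence. In the uniform pairing of half-edges, the probability that a specific half-edge of $v$ pairs to a half-edge of $\Gamma_{i+1}$ is at least $(H_{i+1}-2D_v)/(\CL_n-1)$, uniformly in the previous pairings of the other half-edges of $v$; this is a standard hypergeometric comparison, and it stays sharp because $D_v\le n^{1/(\tau-1)}\ll H_{i+1}$ in the regime \eqref{eq::ibound}. Iterating across $v$'s $D_v$ half-edges,
\[
\Pv_n\bigl(v\text{ has no neighbor in }\Gamma_{i+1}\bigr)\le \Bigl(1-\tfrac{H_{i+1}/2}{\CL_n}\Bigr)^{D_v}\le \exp\!\bigl(-c'\,u_i\,u_{i+1}^{2-\tau}\bigr).
\]
The recursion \eqref{eq::ui_recursion} gives $u_{i+1}^{\tau-2}=u_i/(C\log n)$, so $u_i\,u_{i+1}^{2-\tau}=C\log n$, and the right-hand side is at most $n^{-c'C}$. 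Taking $C=8/c_1$ as announced after \eqref{eq::ui_recursion} makes $c'C$ as large as needed, so a union bound over $|\Gamma_i|\le n$ yields
\[
\Pv_n\bigl(\Gamma_i\not\subset N(\Gamma_{i+1})\bigr)\le n\cdot n^{-c'C}=o(1).
\]

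The iterated statement follows by applying the lemma to $i=0,1,2,\dots$ and observing that the at most $O(\log\log n)$ half-edges consumed along the path $v_0,v_1,\dots,v_i$ do not perturb the concentration of $H_{i+1}$ (which grows polynomially in $n$). The construction remains meaningful as long as $\Gamma_{i+1}\ne\varnothing$, i.e.\ $n\,u_{i+1}^{-(\tau-1)}\to\infty$; substituting the explicit form \eqref{def::ui} rearranges this to the bound \eqref{eq::ibound}. I expect the main obstacle to be the hypergeometric comparison in the middle step: one must carefully verify that each successive pairing of a half-edge of $v$ still sees an $\gtrsim H_{i+1}/\CL_n$ fraction of unused $\Gamma_{i+1}$-half-edges, and since $D_v\ll H_{i+1}$ in the relevant range this is routine but non-trivial to write out. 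A secondary subtlety is that the concentration in Step 1 must be made uniform in $i$ over the $O(\log\log n)$ many layers, which is handled by an initial union bound.
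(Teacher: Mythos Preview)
Your proposal is correct and follows essentially the same route as the paper: lower-bound the half-edge count of $\Gamma_{i+1}$, bound the probability that a single $v\in\Gamma_i$ misses $\Gamma_{i+1}$ by $(1-H_{i+1}/\CL_n)^{\Theta(u_i)}$, use the recursion to turn the exponent into $C\log n$, and union-bound over $|\Gamma_i|\le n$. The only cosmetic difference is that the paper concentrates $H_{i+1}$ more cheaply: instead of truncating and applying Bernstein to the heavy-tailed sum $\sum_w D_w\ind\{D_w>u_{i+1}\}$, it lower-bounds each such $D_w$ by $u_{i+1}$ and applies Chernoff directly to the binomial count $|\Gamma_{i+1}|\sim{\sf Bin}(n,1-F(u_{i+1}))$, which sidesteps the truncation entirely and makes the second error term $\exp\{-n[1-F(u_{i+1})]/8\}$ (and hence the condition $u_{i+1}=o(n^{1/(\tau-1)})$) pop out transparently.
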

\begin{proof}
Let us denote the total number of half-edges in $\Gamma_i$ by $\mathcal S_i$. Then, since the degrees are i.i.d.\!\,, we have $|\Gamma_{i+1}| \sim {\sf Bin}(n, 1-F(u_{i+1}))$, and each vertex $w\in \Gamma_{i+1}$ has degree at least $u_{i+1}$. Thus by Lemma \ref{lem::chernoff},
\be\label{eq::si-gammai} \Pv\left( \mathcal S_{i+1} < \frac{u_{i+1} n \left[1-F(u_{i+1})\right]}{2}\right) \le \exp\left\{ -\frac{ n \left[1-F(u_{i+1})\right]}{8} \right\},
\ee
Recall that $\CL_n$ denotes the total number of half-edges in the graph. Then, the probability that there is a vertex $v\in \Gamma_i$ not connected to $\Gamma_{i+1}$ can be bounded from above by
\be\ba \label{eq::error_gamma_i}\Pv_n&\left( \exists v\in \Gamma_i, v \nleftrightarrow \Gamma_{i+1}\right)\le
|\Gamma_i| \left( 1- \frac{\mathcal S_{i+1}}{\CL_n}\right)^{u_i/2} \\
&\le n \exp\!\Big\{\!-\frac{ u_i u_{i+1} n \left[1-F(u_{i+1})\right]}{8 n \Ev[D]} \Big\}+ \exp\{ - n \left[1-F(u_{i+1})\right]/8 \},
\ea \ee
where we recall that $\Pv_n(\cdot):=\Pv(\cdot|D_1, \dots,D_n)$. We have used that $\CL_n< 2\Ev[D]n$ whp by the Law of Large Numbers, $|\Gamma_i|<n$, and the estimate $S_{i+1}$ in \eqref{eq::si-gammai}. The factor $1/2$ in the exponent $u_i/2$ comes from the worst-case scenario estimate when we connect all  the first $u_i/2$ half-edges back to $v$. Similar calculations (with indices of $u_i$ and $u_{i+1}$ exchanged) are worked out in more detail in  \cite[Volume II., Chapter 5]{H10}.
Then, using the defining recursion \eqref{eq::ui_recursion}, it is easy to see that
\[ u_i u_{i+1} \left[1-F(u_{i+1})\right] = C \log n (u_{i+1})^{\tau-1} \left[1-F(u_{i+1})\right] \ge C c_1 \log n,  \]
that is, the error term in \eqref{eq::error_gamma_i} is bounded by
\[ \ve_i := \exp\left\{\Big(1- \frac{C c_1}{8\Ev[D]}\Big)\log n \right\}+ \exp\{ -n \left[1-F(u_{i+1})\right]/8 \}. \]
The assertion of the lemma follows if $\ve_i$ small: the first term is small when picking $C$ large enough. For the second term
we need $n \left[1-F(u_{i+1})\right]>O(1)$, which exactly translates to the condition $u_i=o(n^{1/(\tau-1)})$ and to \eqref{eq::ibound} using \eqref{def::ui}. Note that as long as \eqref{eq::ibound} is satisfied, even  $\sum_{j=1}^i \ve_i = o(n^{-1})$. This means that we can apply the lemma consecutively for the layers $(\Gamma_i)$'s and build a path $(v_0,v_1,\dots, v_i)$ such that for all $0\le j \le i $ we have $v_j\in \Gamma_j$ and $(v_j, v_{j+1})\in E(\CMD)$ whp, as long as $i$ satisfies \eqref{eq::ibound}. This finishes the proof of the second statement of the lemma.
 \end{proof}
 \begin{figure}\label{fig::mountain}
\includegraphics[width=0.5\textwidth]{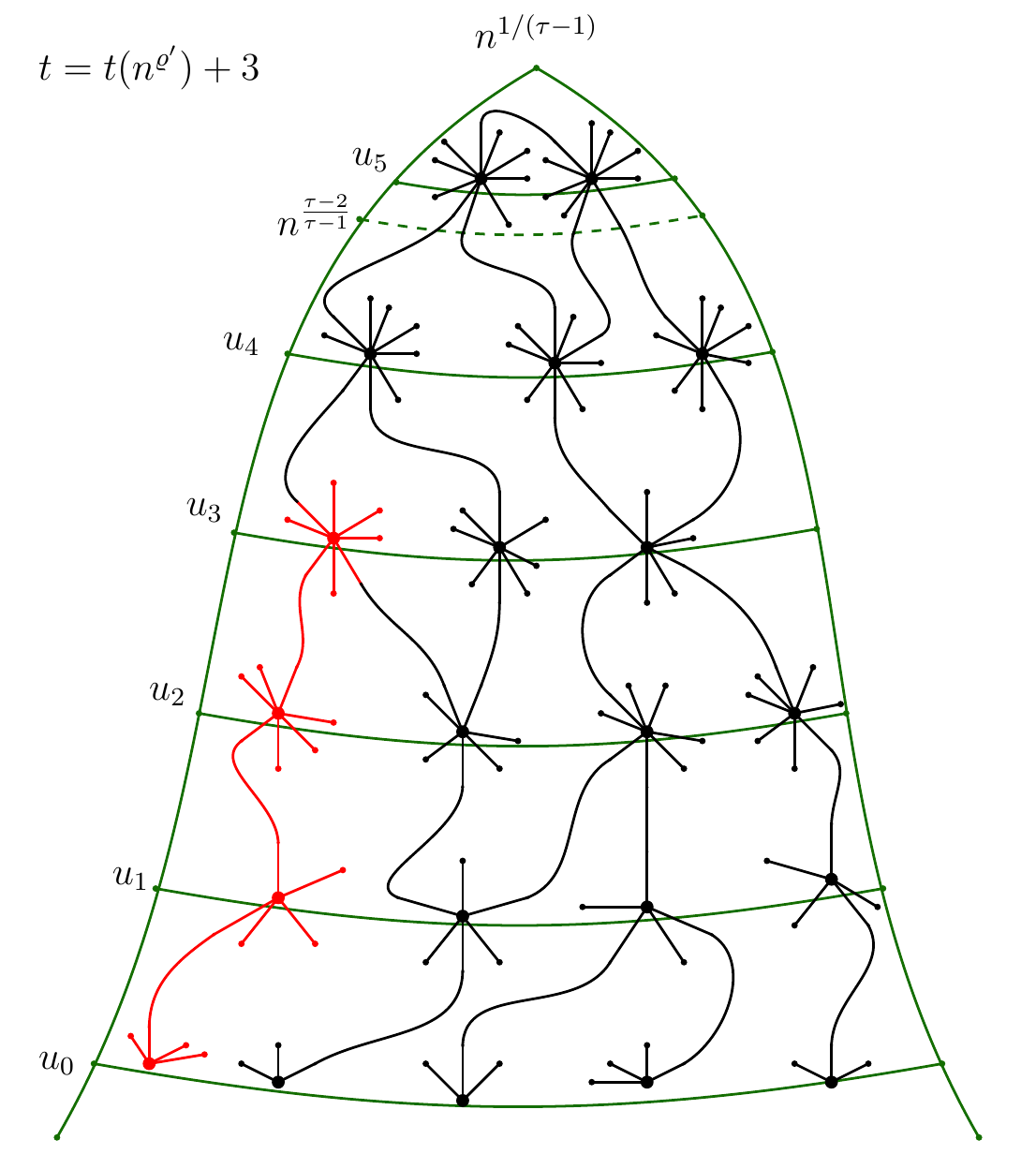}
\caption{An illustration of the layers and the mountain climbing phase at time $t(n^{\vr'})+3$. Disclaimer: the degrees on the picture are only an illustration.}
\end{figure}
With Lemma \ref{lem::gamma_i_connectivity} in hand we can determine how long it takes to climb up through the layers $\Gamma_i$ to the highest-degree vertices. Lemma \ref{lem::maxdegree} with $X_i=D_i\sim F$, $\alpha=\tau-1$ shows that the maximal degree in $\CMD$ is of order $n^{1/(\tau-1)}$. We write $i^\ast$ for the last index when $\Gamma_{i}$ is whp nonempty, i.e.,
\be\label{def::i*}i^\ast:=\inf \{ i: u_i \le n^{1/(\tau-1)} < u_{i+1}\}. \ee
An easy calculation using \eqref{def::ui} shows that
\be\label{eq::value_i*} i^\ast= -1+ \frac{-\log ((\tau-1)\vr'')}{|\log(\tau-2)|}-b_n,
\quad \mbox{ with }\quad
 b_n=\left\{ \frac{-\log ((\tau-1)\vr'')}{|\log(\tau-2)|}\right\}. \ee
Note that $i^\ast$ satisfies \eqref{eq::ibound}, thus all the error terms up to this point stay small.
Using the value of the overshoot exponent $\vr''$ in \eqref{eq::rho_0} and then the value $a_n$ in \eqref{eq::an}, plus the fact that $\{x - 1+\{y\}\}=\{x+y\}$,  we get that
\be\label{eq::bn} b_n = \left\{ \frac{-\log ((\tau-1)\vr')}{|\log(\tau-2)|}+ a_n-1\right\} = \left\{ \frac{-\log ((\tau-1)\Yrn)+\log \log n}{|\log(\tau-2)|}\right\}.\ee
From \eqref{def::ui} one can easily calculate that
\be\label{eq::ui*} \ba u_{i^\ast}&=n^{\frac{(\tau-2)^{b_n}}{\tau-1}} (C \log n)^{-e_{i^\ast}},  \quad \mbox{ with} \\
e_{i^\ast}&=\frac{1}{3-\tau}\left(\frac{(\tau-2)^{b_n}}{(\tau-1)\vr''} -1\right)\le\frac{1}{(3-\tau)}\left(\frac{1}{(\tau-1)\vr''}-1\right). \ea \ee
We will repeatedly need the total time to reach the top, so let us introduce the notation
\be\label{eq::k*+i*}  T_r:=t(n^{\vr'})+i^\ast=\frac{\log\log n-\log \left((\tau-1) \Yrn\right)}{|\log(\tau-2)|} -1-b_n,\ee
which only depends  on $\vr'$ via the approximating $\Yrn$, and $b_n$ is exactly the fractional part of the expression on the rhs of $T_r$. Since also $\Yrn\to Y_r$ irrespective of the choice of $\vr'$, this establishes that the choice of $\vr'$ is not relevant in the proof.

\section{Crossing the peak of the mountain}\label{sc::peak}
Next we investigate what happens when the path through the layers reaches the highest degree vertices.
We have just seen that the exponent of $n$ in $u_{i^\ast}$ is $\frac{(\tau-2)^{b_n}}{\tau-1} \in \left( \frac{\tau-2}{\tau-1}, \frac{1}{\tau-1}\right)$.
Recall that the maximum degree in the graph has exponent $\frac{1}{\tau-1}$ whp, i.e. $\Gamma_{i^\ast+1}= \varnothing$ whp, meaning the path can not jump `up' one more step.
 On the other hand, we can make use of the following lemma from \cite[Volume II., Chapter 5]{H10}:
 \begin{lemma}\label{lem::direct_connect} Consider two sets of vertices $A$ and $B$. If for the number of half-edges $\mathcal S_A=o(n)$ and $\mathcal S_B$ satisfy
 \[ \frac{\mathcal S_A \mathcal S_B}{n}> h(n),\]
 for some function $h(n)$, then conditioned on the degree sequence with $\CL_n\le 2\Ev[D] n$, the probability that the two sets are not directly connected can be bounded from above by
 \[ \Pv_n(A\nleftrightarrow B)< \e^{-\tfrac{h(n)}{4\Ev[D]}}.\]
  \end{lemma}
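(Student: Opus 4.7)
The plan is to exploit the fact that in the configuration model the pairing order of half-edges can be chosen arbitrarily (and even adaptively) without changing the law of the resulting graph. In particular, I would pair the half-edges belonging to $A$ one at a time and lower-bound the conditional probability that at least one such pairing finds a half-edge in $B$.

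Concretely, assume WLOG that $A$ and $B$ are disjoint (otherwise the bound is trivial, since a shared vertex already implies a connection). Enumerate the half-edges of $A$ as $e_1,e_2,\dots,e_{\mathcal S_A}$ and process them in order, skipping any half-edge that has already been paired in a previous step. At the moment we are about to pair the (next available) half-edge of $A$ in the $(i+1)$-st iteration, some half-edges have already been removed from the pool. Under the event $\{A\nleftrightarrow B\}$ restricted to the first $i$ pairings, none of the consumed half-edges can lie in $B$, and in the worst case every one of the $i$ completed pairings consumed two half-edges of $A$ (namely, when $A$ pairs with itself). Thus at most $2i+1$ half-edges have left the pool, all of them outside $B$. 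Given the degree sequence, the conditional probability that the next pairing avoids $B$ is therefore at most
\[
1-\frac{\mathcal S_B}{\CL_n-(2i+1)}.
\]

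Iterating this estimate for $i=0,1,\dots,\lfloor \mathcal S_A/2\rfloor -1$ (the worst-case number of completed rounds we are guaranteed from $A$'s half-edges) and using $\CL_n\le 2\Ev[D]n$ together with $\mathcal S_A=o(n)$ so that $\CL_n-(2i+1)\le 2\Ev[D]n$ uniformly, one obtains
\[
\Pv_n(A\nleftrightarrow B)\ \le\ \prod_{i=0}^{\lfloor \mathcal S_A/2\rfloor-1}\left(1-\frac{\mathcal S_B}{2\Ev[D]\,n}\right)
\ \le\ \exp\!\left\{-\frac{\mathcal S_A\,\mathcal S_B}{4\Ev[D]\,n}\right\}
\ <\ \exp\!\left\{-\frac{h(n)}{4\Ev[D]}\right\},
\]
where in the second step I use $1-x\le e^{-x}$ and in the last step the hypothesis $\mathcal S_A\mathcal S_B/n>h(n)$.

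The main obstacle is simply a careful bookkeeping: one must check that the assumption $\mathcal S_A=o(n)$ (combined with $\CL_n\le 2\Ev[D]n$) is enough to keep the denominator $\CL_n-(2i+1)$ essentially equal to $\CL_n$ throughout the procedure, so that the single-step bound $\mathcal S_B/(2\Ev[D]n)$ is valid for \emph{every} iteration, and that the factor of $2$ in $\lfloor \mathcal S_A/2\rfloor$ (from $A$-to-$A$ self-pairings) is what produces the constant $4$ in the final exponent. No other input about the graph structure is needed; the estimate is entirely a consequence of the sequential pairing construction used also implicitly in Section~\ref{sc::BP}.
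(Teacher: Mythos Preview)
Your proposal is correct and follows essentially the same route as the paper's proof: sequentially pair the half-edges of $A$, bound each single-step probability of missing $B$ by $1-\mathcal S_B/\CL_n$ (or a slight variant), use $1-x\le e^{-x}$, and note that at least $\lfloor\mathcal S_A/2\rfloor$ pairings from $A$ are guaranteed because the worst case is that every $A$-half-edge pairs back into $A$. The only cosmetic difference is that the paper keeps the intermediate bound $\exp\{-\mathcal S_A\mathcal S_B/(2(\CL_n-2\mathcal S_A))\}$ before invoking $\CL_n\le 2\Ev[D]n$, whereas you replace the denominator by $2\Ev[D]n$ immediately; both lead to the same final constant $4\Ev[D]$.
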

 \begin{proof}
 When pairing the half-edges coming out from $A$, the probability that the $i$-th one paired is not directly connected to a half-edge in $B$ is $(1-\mathcal S_B/(\CL_n-2i-1))$. Thus,
 \[ \ba \Pv_n(A \nleftrightarrow B) &= \prod_{i=0}^{\lceil\mathcal S_A/2\rceil} \left(1-\frac{\mathcal S_B}{\CL_n-2i-1}\right) \le \exp\left\{ -\frac{\mathcal S_A \mathcal S_B}{2(\CL_n-2\mathcal S_A)}\right\}\\
  &\le \exp\left\{-\frac{h(n)}{4 \Ev[D]}\right\}.  \ea \]
 The product only goes until $\mathcal S_A/2-1$, since in the worst case scenario the first $\lfloor \mathcal S_A/2 \rfloor$ half-edges are all paired back to another half-edge in $A$, thus the last $\lfloor\mathcal S_A/2\rfloor$ half-edges are not used anymore. In both cases, we can pair at least $\lceil \CS_A/2 \rceil$ many half-edges.
 \end{proof}
 Let us introduce
 \be\label{def::alpha} \alpha:=1-\frac{(\tau-2)^{b_n}}{\tau-1}, \quad
 \beta:=1+\frac{1}{(3-\tau)}\left(\frac{1}{(\tau-1)\vr''}-1\right), \ee
and
\be\label{eq::wideu1} \widetilde u_1:= (C \log n) n/u_{i^\ast}=n^{\alpha} (C\log n)^\beta,\ee
and the following layer:
\be\label{def::witgamma}\widetilde\Gamma_1:=\{v\in \CMD, D_v > \wit u_1\}.\ee
The next lemma helps us describe how the process goes through the highest-degree vertices:
\begin{lemma}\label{lem::cross_the_peak}
All the vertices in $\wit \Gamma_1$ are occupied by red at time $T_r+1$, i.e.,
\be\label{eq::slopestart}  \widetilde\Gamma_1\subset \mathcal R_{T_r+1} \quad \mbox{whp.}\ee
\end{lemma}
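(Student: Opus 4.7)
The strategy is: by time $T_r$, red has climbed along the path produced by Lemma \ref{lem::gamma_i_connectivity} up to some vertex $v^\ast \in \Gamma_{i^\ast}$ of degree at least $u_{i^\ast}$, of which at most $O(i^\ast)=O(\log\log n)$ half-edges have been exposed during the climb, leaving $\ge u_{i^\ast}/2$ free at time $T_r$. A single additional time unit then suffices for red to reach \emph{every} vertex of $\widetilde\Gamma_1$ directly from $v^\ast$, because the calibration $\widetilde u_1 = (C\log n)\,n/u_{i^\ast}$ is tailor-made so that for any $w\in\widetilde\Gamma_1$ the product of half-edge counts satisfies $u_{i^\ast}\cdot\widetilde u_1/n \ge C\log n$ -- the regime where Lemma \ref{lem::direct_connect} forces a direct edge with polynomially small failure probability.

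Concretely, for a fixed $w\in\widetilde\Gamma_1$, applying Lemma \ref{lem::direct_connect} with $A=\{v^\ast\}$, $B=\{w\}$ on the event $\{\CL_n\le 2\Ev[D]n\}$ yields
\[\Pv_n\big(v^\ast \nleftrightarrow w\big) \le \exp\!\Big(-\tfrac{C\log n}{8\Ev[D]}\Big) = n^{-C/(8\Ev[D])}.\]
Using \eqref{eq::F} and Lemma \ref{lem::chernoff}, $|\widetilde\Gamma_1|\le 2C_1 n\,\widetilde u_1^{-(\tau-1)}$ whp. A quick calculation from \eqref{def::alpha} gives $1-\alpha(\tau-1) = (\tau-2)^{b_n}-(\tau-2) \in (0,3-\tau]$, hence $|\widetilde\Gamma_1|\le n^{3-\tau+o(1)}$ whp. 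Union-bounding,
\[\Pv_n\!\left(\exists\, w\in\widetilde\Gamma_1 : v^\ast \nleftrightarrow w\right) \le n^{3-\tau+o(1)-C/(8\Ev[D])} = o(1),\]
provided $C$ is chosen large enough that $C/(8\Ev[D])>3-\tau$. This delivers $\widetilde\Gamma_1 \subset N(v^\ast) \subset \mathcal R_{T_r+1}$, up to the possibility that some $w\in\widetilde\Gamma_1$ has already been painted blue.

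To rule out blue interference, note that since $\la>1$ and $T_r+1=O(\log\log n)$, blue has completed at most $\lfloor(T_r+1)/\la\rfloor$ generations by time $T_r+1$ and therefore remains well within the coupling regime of Lemma \ref{lem::coupling-for-two}. Applying Davies' theorem (Theorem \ref{thm::davies}) inside this coupling yields a blue cluster of total size $\exp(O((\log n)^{1/\la}))$ at this time; in particular Lemma \ref{lem::maxdegree} implies that the maximum degree it has touched is $\exp(O((\log n)^{1/\la}))$, which is $\ll n^\alpha$ whp, so $\widetilde\Gamma_1\cap\mathcal B_{T_r+1}=\varnothing$.

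The main obstacle is selecting a single constant $C$ in the recursion \eqref{eq::ui_recursion} that closes every error term simultaneously: $C>8\Ev[D]/c_1$ for each of the $i^\ast$ climbing steps in Lemma \ref{lem::gamma_i_connectivity}, and $C>8\Ev[D](3-\tau)$ to dominate the polynomial size of $\widetilde\Gamma_1$ in the union bound above. Both conditions are monotone in $C$ and independent of $n$, so a single sufficiently large finite $C$ works; the slight book-keeping about which half-edges of $v^\ast$ remain unexposed after the climb is absorbed into the factor $1/2$ in $\mathcal S_A\ge u_{i^\ast}/2$.
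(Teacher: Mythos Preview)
Your proof is correct and follows essentially the same approach as the paper: both pick the red vertex $v^\ast\in\Gamma_{i^\ast}$ with degree $\ge u_{i^\ast}$, apply Lemma~\ref{lem::direct_connect} with $A=\{v^\ast\}$, $B=\{w\}$ for each $w\in\widetilde\Gamma_1$ (the calibration $\widetilde u_1=(C\log n)n/u_{i^\ast}$ making $h(n)=C\log n$), and finish by a union bound over $\widetilde\Gamma_1$. You add two refinements the paper omits: you use the sharper estimate $|\widetilde\Gamma_1|\le n^{3-\tau+o(1)}$ where the paper simply uses $|\widetilde\Gamma_1|<n$ and compensates with a larger $C$, and you explicitly verify that blue cannot have touched $\widetilde\Gamma_1$ by time $T_r+1$ (max blue degree $\exp(O((\log n)^{1/\la}))\ll n^\alpha$), a point the paper leaves implicit here and only justifies later via Proposition~\ref{claim::maxdegblue}.
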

\begin{proof}
 By Lemma \ref{lem::gamma_i_connectivity}, there is a blue path up to $\Gamma_{i^\ast}$, and hence, blue is occupying some vertices in layer $\Gamma_{i^\ast}$ at time $T_r$. Hence, $\mathcal R_{\sss{T_r}}\cap \Gamma_{i^\ast}\neq \varnothing$, and we have at least one vertex $v_{i^\ast}$ in $\mathcal R_{\sss{T_r}}$ for which the degree is at least $u_{i^\ast}$, see \eqref{eq::ui*}. We claim that this vertex is whp connected to \emph{every} vertex in $\wit \Gamma_1$. To see this, let us set $A:=\{v_{i^\ast}\}$ and $B:=\{w\}$, that is, any single vertex in $\wit\Gamma_1$ with degree at least $(C\log n)n/u_{i^\ast}$. Then apply Lemma \ref{lem::direct_connect}  with this setting to see that $v_{i^\ast}$ is whp connected to $w$. Further, note that $\CS_A \CS_B /n=C\log n$ by the definition of $\wit u_1$. Hence, using the error bound in Lemma \ref{lem::direct_connect} and a union bound,
\be\label{eq::error-peak} \Pv_n( \exists w \in \wit\Gamma_1, (v_{i^\ast},  w) \notin E(\CMD) | \wit\Gamma_1) \le |\wit\Gamma_1| \frac{1}{n^{C/4\Ev[D]}}. \ee
Clearly $|\wit \Gamma_1|<n$:  picking a large enough $C$, we see that the error probability tends to zero.
Calculating  $C\log n \cdot n/u_{i^\ast}$ yields the formula for $\wit u_1$.
\end{proof}

 It is important to note that vertices with degree larger than $\wit u_1$ do whp exist in $\CMD$ by Lemma \ref{lem::maxdegree}. Moreover, $i^\ast$ is the first index when we can apply Lemma \ref{lem::direct_connect}, since for all smaller values $i< i^\ast$, there are whp no vertices with degree at least $n/u_i$ by Lemma \ref{lem::maxdegree}.

This completes the \emph{crossing the peak of the mountain} phase.

\section{red avalanche from the peak and the blue climber}\label{sc::slopedown}
Using the value $\widetilde u_1$ in \eqref{eq::wideu1}, let us again recursively define
\be \label{eq::wideui_recursion} \widetilde u_{\ell+1}=C \log n\!\cdot\!(\widetilde u_{\ell})^{\tau-2}. \ee
and also the increasing sequence of sets
\[ \widetilde \Gamma_\ell:=\{ v: D_v > \widetilde u_\ell\},\]
i.e., now $\wit \Gamma_1 \subset \wit \Gamma_2 \subset \dots$ holds.
Since \eqref{eq::wideui_recursion} is the very same as the recursion in \eqref{eq::ui_recursion}
with indices exchanged, we can apply Lemma \ref{lem::gamma_i_connectivity} to $\big(\widetilde \Gamma_\ell\big)_{\ell\ge 1}$, now yielding that for any $\ve> 0$, for all $\ell< (1-\ve)\log\log n / |\log(\tau-2)|$
\be\label{eq::widetildegamm} \widetilde\Gamma_{\ell+1}\subset N(\widetilde \Gamma_\ell)\quad\mbox{ and }\quad \widetilde \Gamma_{\ell} \subset \mathcal R_{\sss{T}_r+\ell} \quad \mbox{whp.} \ee
This means that in the `sloping down' phase, whp red occupies \emph{all} vertices in $\wit\Gamma_\ell$ at time $T_r+\ell$.
Solving the recursion \eqref{eq::wideui_recursion} yields that
\be\label{eq::ul} \widetilde u_\ell= n^{\alpha(\tau-2)^{\ell-1}} (C\log n)^{\beta (\tau-2)^{\ell -1} + \frac{1}{3-\tau}\left(1-(\tau-2)^{\ell-1}\right)},\ee
where $\alpha$ and $\beta$ were defined in \eqref{def::alpha}. Note that the exponent of $C\log n$ stays bounded even when $\ell\to \infty$.
Hence this procedure can be continued even to reach lower degree vertices, for every fixed $\ve>0$ up until $\ell< (1-\ve) \log\log n / |\log(\tau-2)|$.

In what follows, we determine the point where red and blue meet. More precisely, we calculate the value $\ell$ such that during the time $T_r+\ell$,
the maximum degree vertex in the cluster of blue is of the same order as $\widetilde u_\ell$.
Since at time $T_r+\ell$, red occupies whp almost every vertex with degree at least $\wit u_\ell$,
the growing cluster of blue bumps into the occupied vertices and cannot spread to higher-degree vertices anymore.

The following proposition about the maximal degree of blue is our main building block for the proof of Theorem \ref{thm::maxdegreeblue}:
\begin{proposition}\label{claim::maxdegblue}
Let us denote by $D_{\max}^{\sss{(b,n)}}(t)$ the forward  degree in $\CMD$ of the maximal degree vertex in the blue
cluster at time $t$. Then, at time $T_r+t$ and for any real $1\le t \le O^\diamond(1)$,
\be \label{eq::maxdegreeatTr+l}D_{\max}^{\sss{(b,n)}}(T_r+t)=\exp \bigg\{ \Ybn \left(\frac{1}{\tau-2}\right)^{\left\lfloor\frac{T_r+t}{\la}\right\rfloor+1}(1+o_\Pv(1))\bigg\},  \ee
as long as $t$ is so that the quantity on the rhs is less than $\wit u_{[t]}$,  and
where $\Ybn$ is defined in Definition \ref{def::Y}.
\end{proposition}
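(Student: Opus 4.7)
The formula to prove is a statement about blue's local branching-process phase, examined at the later wall-clock time $T_r+t$. I would repeat for blue the program of Section~\ref{sc::BP} but, since blue is slower by the factor $\lambda>1$, it has only completed $k_b(t):=\lfloor (T_r+t)/\lambda\rfloor$ generations by then. Recalling $T_r\approx \log\log n/|\log(\tau-2)|$ from \eqref{eq::k*+i*}, we have $(\tau-2)^{-k_b(t)}\asymp (\log n)^{1/\lambda}$, so that Davies' theorem applied to the blue BP predicts $|Z_{k_b(t)}^{\sss{(b)}}|=\exp\{Y_b^{\sss{(n)}}(\tau-2)^{-k_b(t)}(1+o(1))\}$, i.e.\ a subpolynomial cluster. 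Taking the maximum of i.i.d.\ size-biased forward degrees over such a cluster raises the size to the power $1/(\tau-2)$, producing exactly the exponent in \eqref{eq::maxdegreeatTr+l}.

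\emph{Step 1: extending the coupling for blue up to $T_r+t$.} Since by Lemma~\ref{lem::coupling-for-two} the two coloured explorations are coupled up to time $t(n^\vr)$ (when red reaches $n^\vr$) without ever meeting each other, the blue exploration can be continued independently from a delayed Galton--Watson tree with offspring $F$ for the root and $F^\ast$ afterwards. The only thing to check beyond time $t(n^\vr)$ is that the blue cluster has not become so large that its forward half-edges start hitting already-paired half-edges. Using the naive bound $(\tau-2)^{-k_b(t)}=(\log n)^{1/\lambda}$ and the tail estimate of Davies' theorem, the blue cluster has total size at most $\exp\{c (\log n)^{1/\lambda}\}=n^{o(1)}$ whp, so the number of half-edges used by blue is still $n^{o(1)}=o(\CL_n)$; the same argument as in \cite[Proposition 4.7]{BHH10} then shows the BP coupling extends up to time $T_r+t$, since blue and red jointly use $O(n^\vr)$ half-edges.

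\emph{Step 2: size of blue's last generation via Davies.} By Theorem~\ref{thm::davies} and Lemma~\ref{lem::convergence-non-delayed}, $(\tau-2)^k\log Z_k^{\sss{(b)}}$ converges a.s.\ to $Y_b$. The quantity $Y_b^{\sss{(n)}}$ defined in \eqref{def::yrn-ybn} is the same sequence evaluated at a fixed earlier index $\lfloor t(n^\vr)/\lambda\rfloor\le k_b(t)$, so the a.s.\ convergence gives
\[
\log Z_{k_b(t)}^{\sss{(b)}}=Y_b^{\sss{(n)}}(\tau-2)^{-k_b(t)}(1+o_{\Pv}(1))
\]
on the coupling event.

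\emph{Step 3: maximum forward degree in the last generation.} Conditionally on the coupling, the forward degrees in generation $k_b(t)$ are i.i.d.\ with distribution $F^\ast$, whose tail by \eqref{eq::davies_appliedto_G} satisfies the hypothesis of Lemma~\ref{lem::maxdegree} with $\alpha=\tau-2$. Applied with $m=Z_{k_b(t)}^{\sss{(b)}}$, this lemma gives that the maximum forward degree in the blue cluster at time $T_r+t$ is, whp,
\[
D_{\max}^{\sss{(b,n)}}(T_r+t)=\bigl(Z_{k_b(t)}^{\sss{(b)}}\bigr)^{1/(\tau-2)}(1+o_{\Pv}(1))=\exp\Bigl\{Y_b^{\sss{(n)}}(\tau-2)^{-k_b(t)-1}(1+o_{\Pv}(1))\Bigr\},
\]
matching \eqref{eq::maxdegreeatTr+l}. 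The degree in earlier generations is dominated by this because generation sizes grow double-exponentially in $k$. The proviso that the displayed quantity be less than $\widetilde u_{[t]}$ precisely ensures the max-degree vertex has not already been caught by the red avalanche \eqref{eq::widetildegamm}, so that the BP forward degree is indeed realized in the graph.

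\emph{Main obstacle.} The delicate point is Step~1: one must show that, even though blue's exploration continues long after red has exited the BP window, the joint process still couples to two independent BPs with i.i.d.\ size-biased forward degrees. The key quantitative input is the sub-polynomial bound on $|Z_{k_b(t)}^{\sss{(b)}}|$ coming from $\lambda>1$; without it, the probability of forming a cycle or of meeting red during blue's exploration would not tend to zero. Verifying this carefully (and the implicit assertion that the maximum is attained in the last generation rather than earlier ones) is what the proof will spend most of its effort on.
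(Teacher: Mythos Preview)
Your approach is genuinely different from the paper's and, while the overall strategy is plausible, Step~1 contains an error that invalidates the argument as written.

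\textbf{The paper's approach.} The paper does \emph{not} extend the branching-process coupling for blue beyond time $t(n^{\vr'})$. Instead it freezes the coupling there, records $Z^{\sss{(b)}}_{\lfloor t(n^{\vr'})/\la\rfloor}$, and then runs a graph-based mountain-climbing argument for blue just as for red: for the lower bound it defines the layers $\Gamma_i^{\sss{(b)}}$ via \eqref{eq::uibar} and invokes Lemma~\ref{lem::gamma_i_connectivity} to build a path one layer up at each blue step; for the upper bound it proves a separate path-counting lemma (Lemma~\ref{lem::badpaths}) showing that whp no path from $\CB_{t(n^{\vr'})}$ overshoots the companion sequence $\widehat u_i^{\sss{(b)}}$. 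The $(C\log n)$ powers in $u_i^{\sss{(b)}}$ and $\widehat u_i^{\sss{(b)}}$ are then absorbed into the $(1+o_\Pv(1))$. This layered setup is also what feeds directly into Lemma~\ref{lem::numberofverticesinGamma}, which the later sections need.

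\textbf{The gap in your Step~1.} Your justification for extending the coupling is that ``blue and red jointly use $O(n^\rho)$ half-edges''. This is false once $t\ge 1$: already at time $T_r$ red has paired all half-edges of a vertex in $\Gamma_{i^\ast-1}$, which is of polynomial order $u_{i^\ast-1}\asymp n^{(\tau-2)^{1+b_n}/(\tau-1)}$, and by time $T_r+\ell$ the avalanche has consumed all half-edges in $\widetilde\Gamma_\ell$, i.e.\ $\mathcal E_{\ge \widetilde u_\ell}\asymp n\widetilde u_\ell^{2-\tau}$, which is again polynomial. So the premise of your coupling argument collapses. What \emph{is} true is that blue's graph-neighbourhood alone involves only $\exp\{O((\log n)^{1/\la})\}=n^{o(1)}$ vertices, so one may pair blue's half-edges \emph{first} and apply Proposition~\ref{prop::coupling} to blue in isolation; this salvages the upper bound. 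For the lower bound, however, you then still owe an argument that the maximum-degree vertex in blue's exploration tree (and the path leading to it) is not already coloured red by the time blue gets there --- the proviso $D_{\max}^{\sss{(b,n)}}<\widetilde u_{[t]}$ only rules out the avalanche, not accidental collisions with the $n^{\vr}$-sized BP cluster of red or the many low-degree neighbours acquired during red's climb.

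\textbf{What each approach buys.} Your route, once repaired, is arguably lighter for the bare statement of the proposition: no layer bookkeeping, no bad-path lemma. The paper's layered approach, on the other hand, is what makes the subsequent quantitative control possible --- it gives not just the maximum degree but also the number $A_i$ of blue vertices in each layer (Lemma~\ref{lem::numberofverticesinGamma}) and the half-edge count $\MBN$ (Lemma~\ref{lem::verticeswithmaxdegree}), both of which are essential inputs to Sections~\ref{sc::meetingtime} and~7.
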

\noindent Before the proof we need some important definitions that will be used also outside the proof.
Similarly as in \eqref{def::Gamma_i}, let us define:
\be\begin{array}{ll}\label{eq::uibar}
   \widehat u_0^{\sss(b)}:= (Z^{\sss{(b)}}_{\lfloor t(n^{\vr'})/\la\rfloor}\cdot C \log n)^{1/(\tau-2)}, &\quad u_0^{\sss(b)}:= (Z^{\sss{(b)}}_{\lfloor t(n^{\vr'})/\la\rfloor}/C\log n)^{1/(\tau-2)} \\

  \widehat u_{i+1}^{\sss{(b)}}:= (\widehat u_i^{\sss{(b)}}\cdot C \log n)^{1/(\tau-2)}, &\quad  u_{i+1}^{\sss{(b)}}: = \left(u_i^{\sss{(b)}}/ C \log n\right)^{1/(\tau-2)} \\

  \widehat\Gamma_i^{\sss{(b)}}:=\{v \in \CMD: d_v \ge \widehat u_i^{\sss{(b)}}\}, &\quad \Gamma_i^{\sss{(b)}}:=\{v \in \CMD: d_v \ge  u_i^{\sss{(b)}}\}. \\
\end{array}\ee
Note that $\Gamma_i^{\sss{(b)}}$ grows exactly as $\Gamma_i$ while $\widehat\Gamma_i^{\sss{(b)}}$ grows faster: there is always an extra $(C\log n)^2$ factor causing an initial `gap' of order $(\log n)^2$ between $u_0^{(b)}, \widehat u_0^{(b)}$ and `opening up' as $i$ gets larger.

Further, let us say that a quantity  $Q\asymp O^\diamond(x)$ for $x\in R^+$, if $Q$ satisfies
\be\label{def::idiamond} Q= x \frac{\log\log n}{|\log (\tau-2)|} + O_\Pv(1).\ee
We will see below in \eqref{eq::tc} that blue cannot make more jumps than $O^\diamond\big(\tfrac{\la-1}{\la(\la+1)} \big)$ in its climbing phase. In order to show Proposition \ref{claim::maxdegblue}, we need a lower and an upper bound on the maximal degree in each step. The next lemma handles the upper bound, but first some definitions.

 We say that a sequence of vertices and half-edges $(\pi_0, s_0, t_1, \pi_1, s_1, t_2,  \dots,  t_k, \pi_k)$ forms a path in $\CMD$, if for all $0< i\le k$, the half edges $s_{i-1}, t_i$ form an edge between $\pi_{i-1},\pi_i$.
Let us denote the vertices in a path starting from a half-edge in $Z^{\sss{(b)}}_{\lfloor t(n^{\vr'})/\la\rfloor}$ by $\pi_0, \pi_1, \dots $. We say that a path is \emph{good} if $\deg(\pi_i)\le\widehat u_i^{\sss{(b)}}$ holds for every $i$. Otherwise we call it \emph{bad}. We decompose the set of bad paths in terms of where they turn bad, i.e.\  we say that  a bad path is belonging to  $\CB ad \CP_k$ if it turns bad at the $k$th step:
\[ \ba  \CB ad\CP_k := &\{ (\pi_0, s_0, t_1, \pi_1, s_1 \dots, t_k, \pi_k) \text{ is a path, } \\
 &\quad \pi_0\!\in\!\CB_{ t(n^{\vr'}) },\   \deg(\pi_i)\!\le\! \widehat u_i^{\sss{(b)}} \ \forall i\le  k-1,\ \deg(\pi_k)\!\ge\!\widehat u_k^{\sss{(b)}}   \}.\ea\]
The following lemma tells us that the probability of having a bad path is tending to zero:

\begin{lemma}\label{lem::badpaths}
Fix $0<x\le (\la-1)/\la(\la+1)$. Then for any $k_0 \le O^\diamond(x)$, the following bound on the probability of having any bad paths holds:
\be \Pv( \exists k\le k_0:  \CB ad\CP_k \neq \varnothing) \le \frac{2}{C\log n}.\ee
\end{lemma}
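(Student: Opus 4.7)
The plan is a first-moment bound via Markov's inequality and a union bound:
\[
\Pv(\exists k\le k_0:\CB ad\CP_k\neq\varnothing)\le \sum_{k=1}^{k_0}\Ev[|\CB ad\CP_k|],
\]
so it suffices to show that $\Ev[|\CB ad\CP_k|]$ decays geometrically in $k$ with ratio $O(1/\log n)$, summing to at most $2/(C\log n)$.

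First I would perform path counting in the configuration model. For fixed vertices $\pi_0,\dots,\pi_k$, the number of ordered half-edge realizations of a length-$k$ path through them equals $\deg(\pi_0)\deg(\pi_k)\prod_{i=1}^{k-1}\deg(\pi_i)(\deg(\pi_i)-1)$, each such configuration present under the random pairing with probability $\prod_{i=1}^{k}1/(\CL_n-2i+1)$. Imposing the degree constraints of $\CB ad\CP_k$, summing over vertex sequences, and using $\CL_n-O(k_0)\asymp\CL_n\asymp n$ whp, I obtain
\[
\Ev[|\CB ad\CP_k|]\lesssim H_0^{\sss{(b)}}\prod_{i=1}^{k-1}\frac{\sum_v\deg(v)^2\indjuli\{\deg(v)\le\widehat u_i^{\sss{(b)}}\}}{\CL_n}\cdot\frac{\sum_v\deg(v)\indjuli\{\deg(v)\ge\widehat u_k^{\sss{(b)}}\}}{\CL_n},
\]
where $H_0^{\sss{(b)}}$ denotes the total degree of $\CB_{t(n^{\vr'})}$. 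A Bernstein-type concentration of sums of i.i.d.\ power-law variables, in the spirit of Lemma \ref{lem::chernoff}, yields the whp estimates $\sum_v\deg(v)^2\indjuli\{\deg(v)\le m\}\le C'nm^{3-\tau}$ and $\sum_v\deg(v)\indjuli\{\deg(v)\ge m\}\le C'nm^{-(\tau-2)}$, holding simultaneously for all $m=\widehat u_i^{\sss{(b)}}$, $0\le i\le k_0$, by a union bound; these follow from the tail moment estimates $\Ev[D^2\indjuli\{D\le m\}]\le Cm^{3-\tau}$ and $\Ev[D\indjuli\{D\ge m\}]\le Cm^{-(\tau-2)}$ deduced from \eqref{eq::F}.

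The critical cancellation comes from the recursion $(\widehat u_{i+1}^{\sss{(b)}})^{\tau-2}=\widehat u_i^{\sss{(b)}}\cdot C\log n$, which I would rewrite as $(\widehat u_i^{\sss{(b)}})^{3-\tau}=\widehat u_i^{\sss{(b)}}/(\widehat u_{i-1}^{\sss{(b)}}\,C\log n)$ for $i\ge1$ and $(\widehat u_k^{\sss{(b)}})^{-(\tau-2)}=1/(\widehat u_{k-1}^{\sss{(b)}}\,C\log n)$. These identities telescope:
\[
\prod_{i=1}^{k-1}(\widehat u_i^{\sss{(b)}})^{3-\tau}\,(\widehat u_k^{\sss{(b)}})^{-(\tau-2)}=\frac{1}{\widehat u_0^{\sss{(b)}}(C\log n)^k},
\]
so substituting back gives $\Ev[|\CB ad\CP_k|]\le C''^{\,k}\,H_0^{\sss{(b)}}/(\widehat u_0^{\sss{(b)}}(C\log n)^k)$ for a constant $C''$ not depending on $C$.

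The final ingredient is $H_0^{\sss{(b)}}\le \widehat u_0^{\sss{(b)}}$ whp. Under the BP coupling of Lemma \ref{lem::coupling-for-two} together with Davies' Theorem \ref{thm::davies}, the total degree of $\CB_{t(n^{\vr'})}$ is comparable, to leading order in logarithm, to the size of generation $\lfloor t(n^{\vr'})/\la\rfloor+1$ of the blue BP, so $\log H_0^{\sss{(b)}}\sim Y_b(\tau-2)^{-\lfloor t(n^{\vr'})/\la\rfloor-1}$, which matches the leading order of $\log\widehat u_0^{\sss{(b)}}=[\log Z^{\sss{(b)}}_{\lfloor t(n^{\vr'})/\la\rfloor}+\log(C\log n)]/(\tau-2)$; the additional $\log(C\log n)/(\tau-2)$ piece in $\log \widehat u_0^{\sss{(b)}}$ forces $H_0^{\sss{(b)}}\le\widehat u_0^{\sss{(b)}}$ whp once $C$ is taken large enough. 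Choosing $C$ so that $C''/C\le 1/2$ and summing the geometric series,
\[
\sum_{k=1}^{k_0}\Ev[|\CB ad\CP_k|]\le \sum_{k\ge 1}\Big(\frac{C''}{C\log n}\Big)^k\le \frac{2C''}{C\log n}\le\frac{2}{C\log n}
\]
after absorbing $C''$ into $C$. The main obstacle in this plan is the path-counting approximation: one must justify the replacement of $\CL_n-O(k)$ by $\CL_n$ and the mean-field estimates $\sum_v\deg(v)^p\indjuli\{\cdot\}\approx n\Ev[D^p\indjuli\{\cdot\}]$ uniformly over $k\le k_0$, where $k_0\sim x\log\log n/|\log(\tau-2)|$ grows with $n$, requiring Bernstein bounds on truncated moments of the power-law distribution that remain sharp at the largest threshold $\widehat u_{k_0}^{\sss{(b)}}$. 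A secondary technical point is the identification $H_0^{\sss{(b)}}\asymp Z^{\sss{(b)}}_{\lfloor t(n^{\vr'})/\la\rfloor+1}$, which reflects the fact that in a BP with doubly-exponential growth the last generation carries essentially all the degree.
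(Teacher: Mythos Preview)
Your approach is essentially the paper's: union bound plus Markov, path counting conditional on the degree sequence, truncated first- and second-moment estimates $\Ev[D^2\ind\{D\le m\}]\le Cm^{3-\tau}$ and $\Ev[D\ind\{D\ge m\}]\le Cm^{2-\tau}$, and the same telescoping via the recursion $(\widehat u_{i}^{\sss{(b)}})^{\tau-2}=\widehat u_{i-1}^{\sss{(b)}}\,C\log n$ that collapses the product to $(C\log n)^{-k}$.

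The one step that does not go through as written is the bound $H_0^{\sss{(b)}}\le \widehat u_0^{\sss{(b)}}$ via Davies' theorem. Davies gives $\log Z_{k+1}=(Y_b+o(1))(\tau-2)^{-(k+1)}$ and $\log Z_k=(Y_b+o(1))(\tau-2)^{-k}$, so
\[
\log \widehat u_0^{\sss{(b)}}-\log H_0^{\sss{(b)}}=o\!\big((\tau-2)^{-(k+1)}\big)+\frac{\log(C\log n)}{\tau-2},
\]
and since $(\tau-2)^{-(k+1)}$ is of order $(\log n)^{1/\la}$ here, the $o(\cdot)$ term can swamp the $\log\log n$ correction; Davies carries no rate. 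The paper instead bounds the sum directly: the forward degrees in the last blue generation are i.i.d.\ with tail exponent $\tau-2<1$, so by the one-big-jump principle for subexponential sums, $\sum_{\pi_0}d_{\pi_0}\le (C'Z^{\sss{(b)}}_{\lfloor t(n^{\vr'})/\la\rfloor}\log n)^{1/(\tau-2)}$ whp, which is $\widehat u_0^{\sss{(b)}}$ up to a constant. Replacing your Davies argument by this direct estimate, the rest of your proof is correct and matches the paper.
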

\begin{proof}
The proof uses path counting methods that we describe in the appendix. Hence we put the proof there.
\end{proof}

 \begin{proof}[Proof of Proposition \ref{claim::maxdegblue}]
 Since the method for the lower bound is very much the same as for red, plus we will need a more detailed analysis of this process below in Lemma \ref{lem::numberofverticesinGamma}, we just sketch the proof (read further to the proof of Lemma \ref{lem::numberofverticesinGamma} for more details).
 First, Lemma \ref{prop::coupling} ensures that we can couple both the blue and the red cluster to their BP approximation until time $t(n^{\vr'})$ given in \eqref{eq::an}.
Since it takes $\la>1$ unit of time to cover an edge for blue, the number of generations covered by the branching process approximation $Z^{\sss{(b)}}$ of blue is $\lfloor t(n^{\vr'})/\la\rfloor$.
The size of the last generation in the blue BP is thus
\be\label{eq::bluestart}Z_{\lfloor t(n^{\vr'})/\la\rfloor}^{\sss{(b)}}= \exp\bigg\{ \Ybn \left(\frac{1}{\tau-2}\right)^{\lfloor t(n^{\vr'})/\la\rfloor}\bigg\}.\ee
We start applying the method in the \emph{Mountain climbing} phase for blue from this point on.
With the same technique as we used to show that $\CR_{t(n^\vr)}\cap\Gamma_0\neq \varnothing$ using Lemma \ref{lem::maxdegree},
we define $u_0^{\sss{(b)}}$ and a corresponding layer $\Gamma_0^{\sss{(b)}}$ in \eqref{eq::uibar} and can show that $\CB_{t(n^{\vr'})}\cap \Gamma_0^{\sss{(b)}}\neq\varnothing$.
Then, we define the nested sequence of layers  $\left(\Gamma^{\sss{(b)}}_i\right)_{i\ge 0}$ with $u^{\sss{(b)}}_i$-s such that at time $\la ( \lfloor t(n^{\vr'})/\la \rfloor + i) $, blue occupies at least $1$ vertex in $\Gamma^{\sss{(b)}}_i$.
by Lemma \ref{lem::gamma_i_connectivity}.
Note that from $\Gamma^{\sss{(b)}}_i$ to $\Gamma^{\sss{(b)}}_{i+1}$, the exponent of $1/(\tau-2)$ on the right hand side of \eqref{eq::bluestart} is increased by on. Further, there is an extra $+1$
in the exponent for the initial maximization of the degrees in $u_0^{\sss{(b)}}$ similarly as in \eqref{eq::ui_recursion}.

The total number of layers $\Gamma_i^{\sss{(b)}}$ jumped by blue at time $T_r+t$ is then $\lfloor(T_r+t)/\la\rfloor-\lfloor t(n^{\vr'})/\la\rfloor$, that, combined with \eqref{eq::bluestart}, yields formula \eqref{eq::maxdegreeatTr+l}.

We still need to check that the term arising from $C\log n$ in the definition of $u_i^{\sss{(b)}}$'s can be put in a $(1+o_\Pv(1))$ factor in the exponent. For this, write $Z^{\sss{(b)}}_{\lfloor t(n^{\vr'})/\la\rfloor}:=m$, then
\[ u_i^{\sss{(b)}}= m^{(\tau-2)^{-(i+1)}}( C\log n)^{(1-(\tau-2)^{-(i+1)})/(3-\tau)}\]
and the last layer before time $T_r+t$ is reached after climbing $i=\lfloor (T_r+t)/\la\rfloor-\lfloor t(n^{\vr'})/\la \rfloor$ many $\Gamma_i^{\sss{(b)}}$ layers, so by \eqref{eq::an2} and \eqref{eq::k*+i*} we calculate
\be\label{eq::calc} \lfloor (T_r+t)/\la\rfloor-\lfloor t(n^{\vr'})/\la \rfloor=(i^*+t)/\la (1+o(1)) \ee
Thus,  if $t\le O^\diamond(1)$, when taking the logarithm, then the term corresponding to $(C\log n)^{(\tau-2)^{-i}/(3-\tau)}$ in $\log (u_{i}^{\sss{(b)}})$ at time $T_r+t$ is of order $(\tau-2)^{-t/\la} = o((\tau-2)^{-(T_r+t)/\la})$
 Hence, these terms vanish when taking out $(\tau-2)^{-\lfloor T_r+t/\la \rfloor}$ in the statement of the lemma.
We will see below in \eqref{eq::tc} that in fact the procedure stops at $t=O^\diamond(\frac{\la-1}{\la (\la+1)})$ since after that red will block the growth of blue entirely.

For the upper bound, according to  Lemma \ref{lem::badpaths}, whp  $\{\CB ad\CP_k=\varnothing \ \forall k\le k_0\}$, and on this event the maximal degree of blue at time $\la \lfloor t(n^{\rho'})/\la\rfloor + \la i$ is at most $\widehat u_i^{\sss{(b)}}$. Since the exponent of $C \log n$ in $\widehat u_i^{\sss{(b)}}$ is  exactly  $(-1)$ times the exponent of $C\log n$ in $u_i^{\sss{(b)}}$,  these terms can also be put in the $(1+o_{\Pv}(1))$ factor by the same argument as for the lower bound.
\end{proof}

We will later need more information than the maximal degree of blue, namely, we also need an upper bound on how many vertices blue occupies in each layer. For this, first, we will show that the probability that blue goes above $\widehat u_i^{\sss{(b)}}$ at time $\la \lfloor t(n^{\vr'})/\la \rfloor+\la i$ is small, then we estimate the number of vertices blue paints in each layer based on this bound. We carry these out in a claim and a lemma.

Let us denote the total number of half-edges attached to vertices with degree larger than $y_n$ by $\CE_{\ge y_n}$.
Then, we have the following tail bound for $\CE_{\ge y_n}$:
\begin{claim}\label{claim::Sbound} For a sequence $y=y_n$, and a large enough constant $C<\infty$,  and for some constant $0<c<\infty$,
\be\label{eq::Sbound} \Pv ( \CE_{\ge y_n} \ge C\!\cdot\! n\!\cdot\! y_n^{2-\tau}) \le \exp\{ - c \!\cdot\! n \!\cdot\! y_n^{1-\tau} \}. \ee
\end{claim}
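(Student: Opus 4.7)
The claim is a concentration bound for $\CE_{\ge y_n}=\sum_{v\in[n]}D_v\mathbf{1}\{D_v>y_n\}$, a sum of $n$ i.i.d.\ non-negative random variables with power-law tails. The plan is to reduce the problem to a truncated Bernstein estimate after a dyadic slicing of the summands.

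First, I would pin down the mean. Using the upper tail bound $1-F(x)\le C_1 x^{1-\tau}$ from \eqref{eq::F} together with the Abel summation $\Ev[D\mathbf{1}\{D>y_n\}]=y_n\bar F(y_n)+\sum_{t>y_n}\bar F(t)$, the hypothesis $\tau>2$ ensures convergence and yields $\Ev[\CE_{\ge y_n}]\le C_2\,n\,y_n^{2-\tau}$. Thus the content of the claim is concentration of $\CE_{\ge y_n}$ around a constant multiple of its mean. For the concentration I would decompose dyadically, setting
\[
S_k:=\sum_{v\in[n]} D_v\,\mathbf{1}\bigl\{2^k y_n<D_v\le 2^{k+1}y_n\bigr\},\qquad \CE_{\ge y_n}=\sum_{k\ge 0} S_k.
\]
Each $S_k$ is an i.i.d.\ sum with summands bounded by $2^{k+1}y_n$, with $\mu_k:=\Ev S_k\le C\,n(2^k y_n)^{2-\tau}$ and $\Var(S_k)\le C\,n(2^k y_n)^{3-\tau}$. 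Because $\Var(S_k)$ and the bound-times-mean term $2^{k+1}y_n\,\mu_k$ in the Bernstein denominator are of the same order $n(2^k y_n)^{3-\tau}$, Bernstein's inequality gives
\[
\Pv\bigl(S_k\ge 2\mu_k\bigr)\le \exp\bigl\{-c\,n\,(2^k y_n)^{1-\tau}\bigr\},
\]
which for $k=0$ produces exactly the factor $\exp\{-c\,n\,y_n^{1-\tau}\}$ that appears in the claim. On the intersection of the good events, the geometric series $\sum_{k\ge 0}(2^k y_n)^{2-\tau}$ converges (since $\tau>2$), so $\CE_{\ge y_n}\le C'\,n\,y_n^{2-\tau}$.

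The main obstacle is the union bound across dyadic levels, since the Bernstein exponent $n(2^k y_n)^{1-\tau}$ degrades in $k$ and becomes $O(1)$ once $2^k y_n$ reaches the natural maximal-degree scale $n^{1/(\tau-1)}$. I would handle this by truncating the partition at $K^\star:=\min\{k:2^k y_n\ge n^{1/(\tau-1)}\}$, so that only $O(\log n)$ levels enter the union bound, and bounding the extreme tail $\sum_v D_v\mathbf{1}\{D_v>n^{1/(\tau-1)}\}$ directly via the crude estimate $\Pv(\exists v:D_v>M)\le C_1 nM^{1-\tau}$ applied with a suitable $M$ slightly above $n^{1/(\tau-1)}$. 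Since the contribution of such extreme vertices is at most (maximum degree)$\,\times\,$(count), which is negligible compared to $n\,y_n^{2-\tau}$ whenever $y_n\ll n^{1/(\tau-1)}$ (the only regime in which the claim is non-trivial), adjusting the absolute constants $C$ and $c$ absorbs the logarithmic loss and delivers the stated bound.
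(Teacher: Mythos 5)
Your overall strategy is essentially the paper's own: slice the degrees dyadically, prove a concentration bound band by band, and sum the geometric series $\sum_k (2^k y_n)^{2-\tau}$ on the intersection of the good events. The only cosmetic difference is that you apply Bernstein to the band sums $S_k$, whereas the paper applies the binomial Chernoff bound (Lemma \ref{lem::chernoff}) to the band counts $X_k^{(n)}$ and multiplies by the band width $2^k y_n$; the variance computation is the same. Your mean bound and the per-band Bernstein estimate (with the mean replaced by its upper bound $Cn(2^ky_n)^{2-\tau}$, a point you should state explicitly) are fine.

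The genuine gap is the final absorption step. After truncating at $K^\star$, the union bound still contains bands with $2^k y_n$ of order $n^{1/(\tau-1)}$, where the Bernstein exponent $n(2^k y_n)^{1-\tau}$ is $O(1)$, and your treatment of the extreme tail costs $\Pv(\exists v: D_v>M)\asymp nM^{1-\tau}$, which for $M$ slightly above $n^{1/(\tau-1)}$ is constant or at best polynomially small. No adjustment of the constants $C$ and $c$ converts a $\Theta(1)$ or polynomially small failure probability into $\exp\{-c\,n\,y_n^{1-\tau}\}$, so what your argument delivers is a whp statement (with polynomially small error when $y_n=o(n^{1/(\tau-1)})$), not the inequality as stated; note also that your ``negligible contribution'' remark for extreme vertices only holds on the event that no degree exceeds $M$, whose probability is exactly the non-exponential cost just described. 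The obstruction you ran into is real: a single vertex of degree at least $Cny_n^{2-\tau}$ already realizes the event, and by the lower bound in \eqref{eq::F} this has probability of order $n\,(ny_n^{2-\tau})^{1-\tau}$, which dwarfs $\exp\{-c\,n\,y_n^{1-\tau}\}$ whenever $ny_n^{1-\tau}\gg\log n$ (e.g.\ $y_n=n^\beta$ with $\beta<1/(\tau-1)$); so the stated exponential form cannot be rescued by any handling of the top bands. The paper's proof reaches it by summing the Chernoff errors $\exp\{-\Ev[X_k^{(n)}]/8\}$ over all $k$ and asserting the sum is $\exp\{-c\,n\,y_n^{1-\tau}\}$, which glosses over the bands where $\Ev[X_k^{(n)}]=O(1)$ --- precisely the point your truncation makes explicit. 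The weaker whp version that your argument does prove is all that is used of the claim elsewhere in the paper, but as a proof of the statement as written, the last step does not go through.
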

\begin{proof}
Since the degrees are i.i.d.\  in $\CMD$, we write
\[ \CE_{\ge y_n} \  {\buildrel {d}\over{=}}\  \sum_{i=1}^{n} D_i \ind_{\{ D_i \ge y_n\}} \le  \sum_{i=1}^{n}  \sum_{k=1}^\infty  2^k y_n \ind{\{ 2^{k-1}y_n \le D_i < 2^k y_n\}}.   \]
Now, exchanging sums,
\[ \CE_{\ge y_n} = \sum_{k=1}^\infty 2^k y_n \left(\sum_{i=1}^{n}  \ind{\{ 2^{k-1}y_n \le D_i < 2^k y_n\}}\right):=  \sum_{k=1}^\infty 2^k y_n X_k^{\sss{(n)}} .\]
The variables $(X_k^{\sss{(n)}})_{k\ge 1}$ form a multinomial random variable, each marginal is a binomial, and hence large deviation type concentration bounds can be used. Lemma \ref{lem::chernoff} combined with a union bound yields  \be\label{eq::chernoff-for-s} \Pv(\exists k\ge 1:  X_k^{\sss{(n)}} \ge 2 \Ev[X_k^{\sss{(n)}}] ) \le \sum_{k=1}^\infty \exp\{ - \frac{1}{8} \Ev[X_k^{\sss{(n)}}]\}. \ee
Now, by \eqref{eq::F},
\[\Ev[X_k^{\sss{(n)}}]\le 2 C_1\!\cdot\!  n\!\cdot\!  y_n^{1-\tau} \cdot 2^{(1-\tau)(k-1)}.\]
 Note that $2^{1-\tau}<1$, hence summing up terms in $k$ on the right hand side of \eqref{eq::chernoff-for-s}, we get that for an $c\le C_1 /12$, the error term is bounded by
 \[ \Pv(\exists k\ge 1:  X_k^{\sss{(n)}} \ge 2 \Ev[X_k^{\sss{(n)}}] ) \le \exp\{ -  c \!\cdot\! n \!\cdot\! y_n^{1-\tau}\}.\]
Since the event $\{\forall k\ge 1:  X_k^{\sss{(n)}} \le 2 \Ev[X_k^{\sss{(n)}}] \}$ also implies that
\[ \CE_{\ge y_n} \le \sum_{k=1}^\infty  2^k y_n 2 \Ev[X_k^{\sss{(n)}}] \le C\!\cdot\! n\!\cdot\! y_n^{2-\tau},\]
combining this fact with the previous error estimate finishes the proof.
\end{proof}

Let us  denote the set and number of blue vertices in the $i$th layer $\Gamma_i^{\sss{(b)}}$ right at the time when blue reaches it by
\be\label{def::Ai} \CA_i:=\CB_{\la \lfloor t(n^{\vr'})/\la \rfloor+\la i}\cap\Gamma_i^{\sss{(b)}}, \qquad A_i:=|\CA_i|. \ee

\begin{lemma}\label{lem::numberofverticesinGamma}
Let  $k_0\le O^\diamond((\la-1)/\la(\la+1))$.  On the event $\{\CB ad\CP_k=\varnothing \ \forall k\le k_0 \}$, whp for all $ i\le O^\diamond(\tfrac{\la-1}{\la(\la+1)})$,
 \be \label{eq::aifinal} \ A_i\le\exp\left\{ \log (C\log n) \cdot \frac{2 (\tau-2)^{-i}}{(3-\tau)^2} \right\}. \ee
 Hence, for $x\le (\la-1)/\la(\la+1)$, for some constant $K_2$, whp
 \be \log A_{O^\diamond(x)}\le K_2 (\log n)^{x} \log \log n \cdot (1+o(1)).  \ee
\end{lemma}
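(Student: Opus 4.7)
The plan is to proceed by induction on $i$, establishing a recursive multiplicative bound of the shape $A_{i+1}\le c\,A_i\cdot(\widehat u_i^{\sss{(b)}}/u_i^{\sss{(b)}})\cdot\log n$ and then iterating. For the base case $i=0$, I would apply Lemma \ref{lem::maxdegree} and the binomial Chernoff bound (Lemma \ref{lem::chernoff}) to the $Z^{\sss{(b)}}_{\lfloor t(n^{\vr'})/\la\rfloor}$ i.i.d.\ size-biased degrees at the end of the branching process phase of blue: the number of these exceeding $u_0^{\sss{(b)}}=(Z^{\sss{(b)}}/(C\log n))^{1/(\tau-2)}$ concentrates around $C\log n$ by \eqref{eq::davies_appliedto_G}, giving $A_0\le K\log n$ whp, hence $\log A_0=O(\log\log n)$, which will be subleading.

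For the inductive step I would condition on the good event $\{\CB ad\CP_k=\varnothing \ \forall k\le k_0\}$ of Lemma \ref{lem::badpaths}, which guarantees that every blue vertex reached at generation $j$ has degree at most $\widehat u_j^{\sss{(b)}}$. Using the closed-form solutions of the recursions in \eqref{eq::uibar}, one first checks that $\widehat u_j^{\sss{(b)}}<u_{i+1}^{\sss{(b)}}$ for all $j\le i$ in the relevant range, so that vertices in $\CA_{i+1}$ must be newly discovered at generation $\lfloor t(n^{\vr'})/\la\rfloor+i+1$ and arise as pairing partners of half-edges emanating from $\CA_i$ (blue vertices of degree strictly below $u_i^{\sss{(b)}}$ at generation $i$ contribute only a negligible second-order correction by the same logic). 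On the good event, the total number of half-edges attached to $\CA_i$ is at most $A_i\widehat u_i^{\sss{(b)}}$, while Claim \ref{claim::Sbound} applied with $y_n=u_{i+1}^{\sss{(b)}}$ shows that the total number of half-edges attached to $\Gamma_{i+1}^{\sss{(b)}}$ is at most $Cn(u_{i+1}^{\sss{(b)}})^{2-\tau}$ whp. A pairing calculation conditional on $\CL_n\ge\tfrac12\Ev[D]n$ then yields $\Ev[A_{i+1}\mid A_i]\le c'A_i\widehat u_i^{\sss{(b)}}(u_{i+1}^{\sss{(b)}})^{2-\tau}$, and the recursion $(u_{i+1}^{\sss{(b)}})^{2-\tau}=C\log n/u_i^{\sss{(b)}}$ converts this into the announced multiplicative bound. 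I would promote this to a whp estimate via Chernoff concentration for the hypergeometric count when the conditional mean is large (it grows rapidly in $i$), and a Markov inequality with a $(\log n)^{\vep}$ slack when the mean is still of polylogarithmic size, followed by a union bound over the $O(\log\log n)$ values of $i\le k_0$.

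Iterating the recursion yields $\log A_i\le \log A_0+\sum_{j=0}^{i-1}r_j+O(i\log\log n)$, where $r_j:=\log(\widehat u_j^{\sss{(b)}}/u_j^{\sss{(b)}})$ satisfies the linear recursion $r_{j+1}=\alpha r_j+2\alpha\log(C\log n)$ with $r_0=2\alpha\log(C\log n)$ and $\alpha=1/(\tau-2)$. Solving explicitly via its fixed point gives $r_j=\log(C\log n)\bigl[\tfrac{2\alpha^j}{(\tau-2)(3-\tau)}-\tfrac{2}{3-\tau}\bigr]$, and summing using $\sum_{j=0}^{i-1}\alpha^j=\tfrac{\tau-2}{3-\tau}(\alpha^i-1)$ produces the leading term $\tfrac{2\alpha^i}{(3-\tau)^2}\log(C\log n)$, matching \eqref{eq::aifinal}; the linear-in-$i$ corrections are absorbed since $i\le k_0=O(\log\log n)$ and the coefficient $-2/(3-\tau)$ in front of $i\log(C\log n)$ is in any case negative. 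For the second statement, substituting $i=O^\diamond(x)$ gives $\alpha^i=(\log n)^{x}(1+o(1))$, hence $\log A_{O^\diamond(x)}\le K_2(\log n)^x\log\log n\cdot(1+o(1))$ with any $K_2>2/(3-\tau)^2$. The delicate point is the careful verification that lower-generation blue vertices contribute only a lower-order term to $A_{i+1}$ (so that the bound is governed solely by the ratio $\widehat u_i^{\sss{(b)}}/u_i^{\sss{(b)}}$ of the most recent layer), together with the uniform-in-$i$ promotion of the expected bound to a whp bound when the conditional expectation $\Ev[A_{i+1}\mid A_i]$ is only polylogarithmic in $n$ at the earliest steps of the iteration.
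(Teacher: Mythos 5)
Your proposal is correct and follows essentially the same route as the paper: bound $A_0=O(\log n)$ whp, dominate $A_{i+1}$ by a binomial with parameters $A_i\widehat u_i^{\sss{(b)}}$ and $\CE_{\ge u_{i+1}^{\sss{(b)}}}/\CL_n$ using Claim \ref{claim::Sbound} and the no-bad-paths event, apply Chernoff to get $A_{i+1}\le C'\log n\cdot A_i\,\widehat u_i^{\sss{(b)}}/u_i^{\sss{(b)}}$, and solve the $\log$-linear recursion for $\widehat u_i^{\sss{(b)}}/u_i^{\sss{(b)}}$ (your fixed-point computation reproduces \eqref{eq::uiquotient}) to obtain the exponent $2(\tau-2)^{-i}/(3-\tau)^2$. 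The only deviations are cosmetic: the paper's Chernoff step already gives uniformly polynomially small errors at every $i$ (since $u_i^{\sss{(b)}}(u_{i+1}^{\sss{(b)}})^{2-\tau}=C\log n$), so your Markov-with-slack fallback is unnecessary, and your explicit check that earlier blue vertices have degree below $u_{i+1}^{\sss{(b)}}$ is a detail the paper leaves implicit in writing $A_{i+1}=|N(\CA_i)\cap\Gamma_{i+1}^{\sss{(b)}}|$.
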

\begin{proof}  First, Lemma \ref{lem::badpaths} guarantees that $\{\CB ad\CP_k=\varnothing \ \forall k\le k_0\}$ holds whp, and on this event $\widehat u_{i}^{\sss{(b)}}$ serves as  an upper bound on the maximal degree of blue at time $\la \lfloor t(n^{\rho'})/\la \rfloor +\la i$. So, we can give a recursive upper bound on the number of vertices reached by blue in a given layer $\Gamma_i^{\sss{(b)}}$ by using $u_i^{\sss{(b)}}$ as a lower and $\widehat u_{i}^{\sss{(b)}}$ as an upper bound on the degrees.

Let us condition on the number of blue vertices $A_i$ in layer $\Gamma_i^{\sss{(b)}}$.
Then, we have at most $A_i$ half-edges in $\Gamma_i^{\sss{(b)}}\cap \mathcal B_{\la \lfloor t(n^{\vr'})/\la\rfloor+\la i}$, with degree at most $\widehat u_{i}^{\sss{(b)}}$, hence  we get the stochastic domination
\be\label{eq::bindom} A_{i+1}=|N(\CA_i)\cap\Gamma_{i+1}^{\sss{(b)}}|\  {\buildrel {d}\over{ \le }}\  {\sf Bin}\left(A_i \widehat u_{i}^{\sss{(b)}}, \frac{\CE_{\ge u_{i+1}^{\sss{(b)}}}}{\CL_n(1+o(1))}\right).\ee
random variable.
Then, we can use Claim \ref{claim::Sbound} with $y= u_{i+1}^{\sss{(b)}}$ to bound $\CS_{\ge u_{i+1}^{\sss{(b)}}}$.
 Hence, on the event that $\Ev[D] n/2<\CL_n<2 \Ev[D] n$ holds, by Lemma \ref{lem::chernoff} applied on the binomial variable in \eqref{eq::bindom},
\be\label{eq::chernofferror}\Pv\left( A_{i+1}> \frac{8 C_1}{\Ev[D] (2-\tau)} A_i  \widehat u_i^{\sss{(b)}} (u_{i+1}^{\sss{(b)}})^{2-\tau}\Big| A_i\right) \le    \exp\left\{ - A_i  u_i^{\sss{(b)}} (u_{i+1}^{\sss{(b)}})^{2-\tau} \frac{C_1}  {4 c_1 \Ev[D] }    \right\}. \ee
Thus, with the error probability in the previous display, whp
\be\label{eq::airec}  A_{i+1}\le A_i \widehat u_i^{\sss{(b)}} (u_{i+1}^{\sss{(b)}})^{2-\tau}\frac{8 C_1}{\Ev[D] (2-\tau)}< A_i \frac{ \widehat u_i^{\sss{(b)}}}{u_{i}^{\sss{(b)}}} C'\log n\ee
with $C':=8 C_1C/(\Ev[D] (2-\tau))$.
Using \eqref{eq::uibar}, we get the recursion $A_{i+1}\le A_i C'\log n \cdot \widehat u_{i}^{\sss{(b)}}/u_i^{\sss{(b)}} $. The bound in \eqref{eq::aifinal} is nothing but
\be\label{eq::Aibound-with-ui/ui} A_i \le A_0 \prod_{j=0}^{i-1} \left(C'\log n \cdot \widehat u_{j}^{\sss{(b)}}/u_j^{\sss{(b)}}\right).\ee
Solving the recursions for $u_{i}^{\sss{(b)}}$ and $\widehat u_{i}^{\sss{(b)}}$ in \eqref{eq::uibar} we get that
\be\label{eq::uiquotient} \widehat u_i^{\sss{(b)}}/u_i^{\sss{(b)}}= (C\log n)^{2\left((\tau-2)^{-(i+1)}-1 \right)/(3-\tau)}. \ee
 Initially $A_0 \le 2C \log n$ whp. This can be seen as follows: by the coupling of the exploration process to the branching process in Section \ref{sc::BP}, the last generation has size $Z_{\lfloor t(n^{\vr'})/\la\rfloor}^{\sss(b)}$, and the degrees are i.i.d. of distribution $D^\star$.  Hence, the number of vertices in this last generation that have degree at least $u_0^{\sss{(b)}}$ has distribution $A_0\sim \mathrm{Bin}( Z^{\sss{(b)}}_{t(n^{\vr'})}, \Pv(D^\star > u_0^{\sss{(b)}}))$.  Note that by the choice of $u_0^{\sss{(b)}}$, $\Ev[A_0]\le C C_1 \log n$, and the Lemma \ref{lem::chernoff} implies that  $A_0< 2 C C_1 \log n$ holds with probability at least $\exp\{ C C_1 \log n/8 \}$, which is small when $C$ is large enough.

 Using \eqref{eq::uiquotient} and evaluating \eqref{eq::Aibound-with-ui/ui} finishes the proof of \eqref{eq::aifinal}. We dropped some negative terms in the exponent in \eqref{eq::aifinal}.
If we set $i\le O^\diamond(x)= x \log \log n / |\log(\tau-2)|+ O_\Pv(1)$,
then by picking a large enough $C$, the error terms are $o(n^{-A_i})$ in \eqref{eq::chernofferror}.
Thus we can also iterate the argument up to time $O^\diamond(x)$ to see that at time $\la \lfloor t(n^{\vr'})/\la\rfloor+\la O^\diamond(x)$,  in $\Gamma_{O^\diamond(x)}^{\sss{(b)}}$,
the number of vertices blue occupies is bounded by the right hand side of \eqref{eq::aifinal}.
\end{proof}
\section{At the collision time - the maximal degree of blue}\label{sc::meetingtime}
\subsection{The maximum degree of blue}
In this section we analyse how red and blue collide and prove Theorem \ref{thm::maxdegreeblue}, i.e., we determine the degree of the maximum degree vertex that blue ever occupies. There are two different processes running at time $T_r+\ell$: the red process is in its \emph{avalanche} phase and occupies every vertex that has degree higher than $\wit u_\ell$, while the slower blue process is still in its \emph{mountain-climbing} phase and keeps increasing its maximal degree (but does not occupy all vertices of this degree). To obtain a good approximation when the two processes meet let us first neglect integer part issues and
determine the collision time of the two processes (the red avalanche and the blue climber) in continuous time.
 Neglecting the terms containing $C\log n$, we can compare the maximal degree
 \eqref{eq::maxdegreeatTr+l} to $\wit u_\ell$ in \eqref{eq::ul} and define $t_c$ as the solution of the equation
\be\label{def::intersection} \exp \left\{ \Ybn (\tau-2)^{-\frac{T_r+t_c}{\la}-1}\right\} = \exp\left\{ \log n \cdot \alpha\cdot(\tau-2)^{t_c-1}\right\}. \ee
Note that the left-hand side is approximately equal to the maximum degree $D_{\max}^{\sss{(b,n)}}(T_r+\ell)$ of blue,
 while the right-hand side is the approximate value of $\wit u_\ell$. Thus $t_c$ is the (non-integer valued) time left till the intersection of these two functions after time $T_r$.

\begin{figure}
\centering
\subfigure[blue jumps first, $d(t_c)<1$ ]{\label{fig::bluedsmall}
\includegraphics[keepaspectratio,width=5cm]{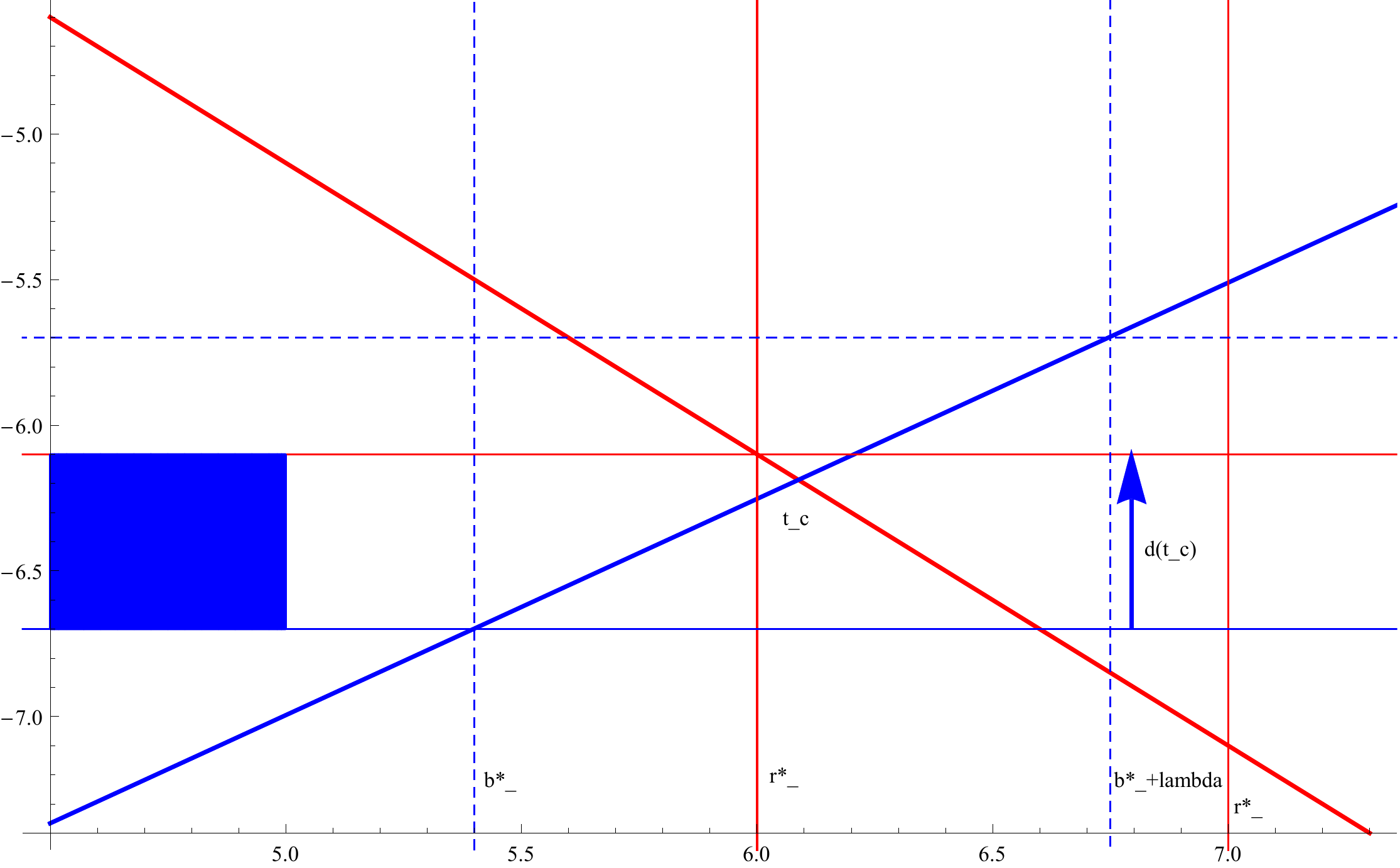}}
\subfigure[blue jumps first, $d(t_c)>1$]{\label{fig::bluedbig}
\includegraphics[keepaspectratio,width=5cm]{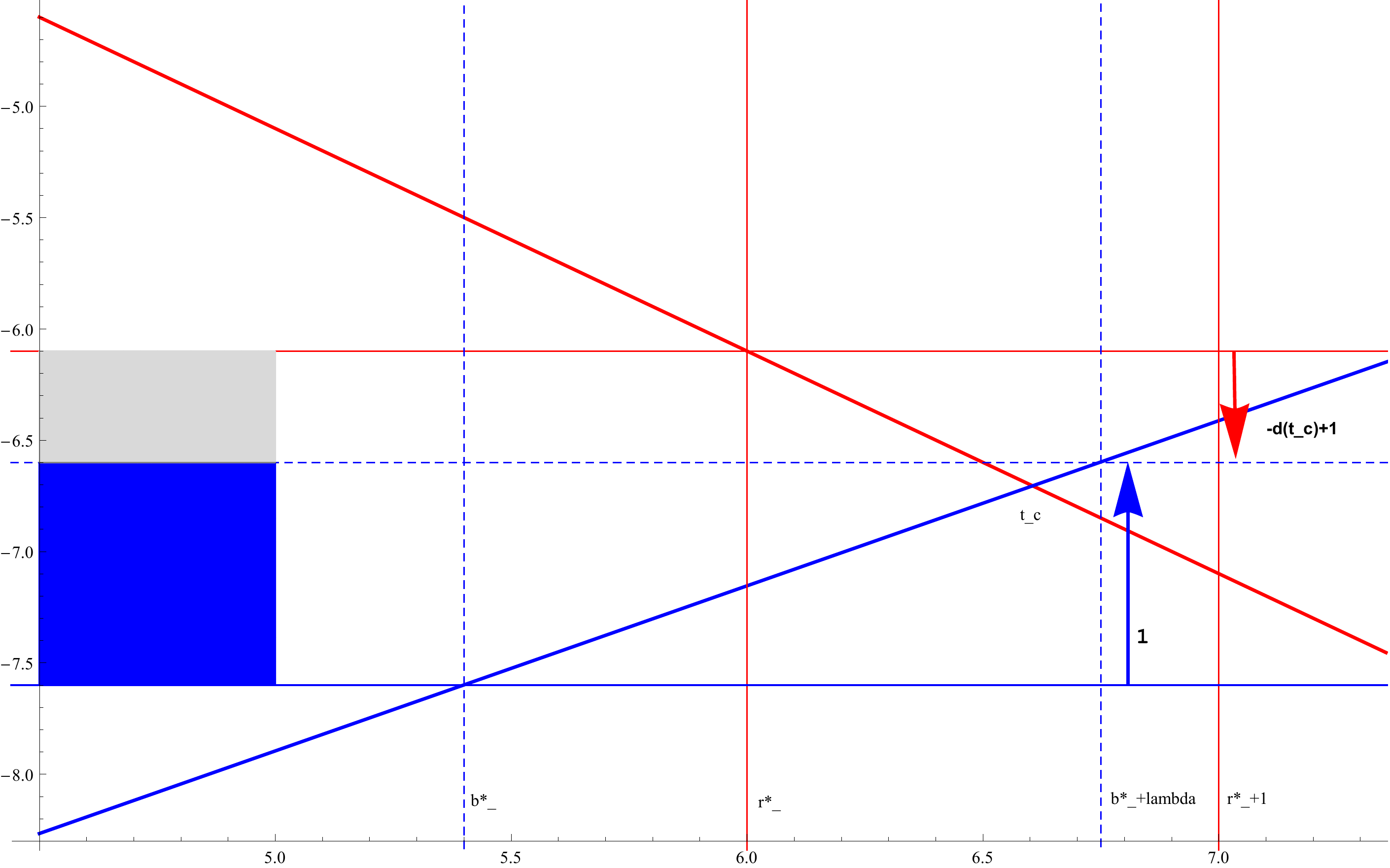}}
\caption{In these pictures the red and blue linear functions indicate the exponent of $1/(\tau-2)$ for $\log$(degree)/$\log n$ occupied by red and blue, where the horizontal axes corresponds to time. The vertical lines indicate the time of the jumps of red and blue before and after the intersection, while the horizontal lines indicate where the two processes are (or would be) at the time of the jumps. In these pictures, blue jumps first after the intersection, thus it can occupy more vertices: the exponent it can reach depends on how large the distance is between red and blue at their last jump before the intersection. The first picture shows a case where the distance before the jump is less than 1, the second when it is larger than 1. The colored regions indicate what degree-region can red and blue occupy at the end.}\label{fig::bluefirst}
\end{figure}

We will soon see that neglecting the integer part does have an influence on the highest degree vertex  blue can occupy.
 To get a more precise picture, we should compare which color is first and second to jump after $T_r+t_c$, since if it is blue, it can still increase its exponent. So, let us introduce the time of the last jump of red and blue before time $T_r+t_c$:
\be\label{def::rstar_bstar}r_-^\ast:=\lfloor t_c\rfloor, \qquad b^\ast_-:=\la \cdot \left\lfloor\frac{T_r+t_c}{\la}\right\rfloor.\ee
In words, red jumps at times $T_r+ r_-^\ast, T_r+r_-^\ast+1,\dots$ while blue jumps at times $b^\ast_-, b^\ast_-+\la, \dots$ and $t_c$ satisfies $T_r+ r_-^\ast\le t_c< T_r+ r_-^\ast+1$ and $b^\ast_-\le t_c<b^\ast_-+\la$.
We need to determine who jumps first after time $T_r+t_c$, (that is, $r_-^\ast+1<b^\ast_-+\la$ or the other way round), so let us also introduce the remaining times till the next jump \emph{after} the intersection for both colors:
\be\label{def::JrJb}\ J_r:=r_-^\ast\!+\!1-t_c=1-\{t_c\},\quad J_b:=b^\ast_-\!+\!\la-(T_r+t_c)= \la\left(1-\left\{\frac{T_r+t_c}{\la}\right\}\right).  \ee
$J_r$ and $J_b$ stands for the \emph{additional time} needed for red and blue till their next jump after time $T_r+t_c$.

\begin{remark}\normalfont
Note that given the values $\Ybn, T_r$ and  $\alpha$, with each additional jump,
 red decreases the exponent of $1/(\tau-2)$ by $1$ and blue increases its exponent by $1$.
 (Here we again neglect the terms including $C\log n$.)  Thus, for red, when plotting
 the exponents of $1/(\tau-2)$ of $\log \wit u_\ell/\log n$ one gets a line of slope $-1$,
 starting from time $T_r+1$ from the value $\alpha$. The exponent of $1/(\tau-2)$ in
  $\log u_i^{(b)}/\log n$ in the cluster of blue is a line of slope $1/\la$, since it increases by one with every additional $\la$ time units, see \eqref{eq::maxdegreeatTr+l}. These lines can be seen in Fig.\ \ref{fig::bluefirst} and Fig.\ \ref{fig::redfirst}.
\end{remark}

Intuitively, the final exponent of the maximal degree of blue depends on two things:
 which color jumps first after the intersection time $T_r+t_c$ and how large the difference $d(t_c)$ is
 between the exponents of $1/(\tau-2)$ in the $\log$(degree)$/\log n$ of red and blue before time $T_r+t_c$. Since with each jump the exponent of $1/(\tau-2)$ of the jumping color is changed by one, it is crucial whether this difference is less than or larger than $1$.
 Let us temporarily postpone the calculations and believe that this difference is
 \be\label{def::dtc} d(t_c):= \{t_c\}+\left\{\frac{T_r+t_c}{\la}\right\}.\ee We will later analyse this difference in detail around equation \eqref{eq::dtc2}.
Since $d(t_c)$ is the sum of two fractional parts, it is at most $2$.
 Recall also that $J_r, J_b$ stands for the time till the next jump of red and blue after time $T_r+t_c$,
  respectively (see \eqref{def::JrJb}). With these notations in mind, there are five cases (compare them to Fig.\ \ref{fig::bluefirst} and Fig.\ \ref{fig::redfirst}).
\begin{enumerate}[\upshape (B1)]
\item\label{caseBsmall} $J_b<J_r$ and $d(t_c)<1$. Blue jumps first after the intersection and occupies some vertices up to
 $\wit\Gamma_{r^\ast_-}$, i.e.\ blue can increase the exponent by a factor $(\tau-2)^{-d(t_c)}$. (Vertices with higher degree than
 that are already red). See Fig~\ref{fig::bluedsmall}.
\item\label{caseBlarge} $J_b<J_r$ and $d(t_c)>1$. Blue jumps first after the intersection
 and occupies some vertices one layer higher, namely the total exponent of $1/(\tau-2)$
 in \eqref{eq::maxdegreeatTr+l} reached by blue is $\left\lfloor\tfrac{T_r+t_c}{\la}\right\rfloor+1$. However,
  since $1<\la$, the next jump after this must be a red jump, hence red occupies every vertex with higher degree than this value. See Fig.\ \ref{fig::bluedbig}.
\end{enumerate}
\begin{enumerate}[\upshape (R1)]
\item\label{caseRsmall} $J_r<J_b$ and $d(t_c)<1$. Red jumps first after the intersection, and occupies every not-yet
 blue vertex down to $\wit\Gamma_{r_-^\ast+1}$, which means that blue cannot increase its exponent anymore.
 Thus the exponent of $1/(\tau-2)$ in \eqref{eq::maxdegreeatTr+l} of the maximal degree reached by blue is $\left\lfloor\tfrac{T_r+t_c}{\la}\right\rfloor$.  See Fig.\ \ref{fig::reddsmall}.
\item\label{caseRonce}$J_r<J_b<J_r+1$ and $d(t_c)>1$. Red can make only one jump after
 the intersection and occupies every vertex in $\wit\Gamma_{r_-^\ast+1}$, while blue jumps after this and
  can reach some vertices with degree up to $\wit\Gamma_{r_-^\ast+1}$ with its next jump. Thus the maximal degree of blue in this case is
   determined by $\wit\Gamma_{r_-^\ast+1}$, see Fig.\ \ref{fig::reddlarge1}.
\item\label{caseRtwice} $J_r+1<J_b$ and $d(t_c)>1$. Red can make at least two consecutive
jumps after the intersection and occupies every not-yet occupied vertex in $\wit\Gamma_{r_-^\ast+2}$,
which means that blue can not increase its exponent. The exponent of $1/(\tau-2)$ in \eqref{eq::maxdegreeatTr+l}
 of the maximal degree reached by blue is again $\left\lfloor\tfrac{T_r+t_c}{\la}\right\rfloor$,  see Fig.\ \ref{fig::reddlarge2}.
\end{enumerate}

Note that above we only handle the cases when $J_b\neq J_r$: this can be ensured by restricting $\la$ to be irrational. If $\la=p/q, \ p,q \in \N$ is rational with $p$ and $q$ co-primes,  then every vertex that is $q t$ away from the blue source and $p t$ away from the red source for arbitrary $t\in \N$ might be occupied at the same (i.e, at time $p t$). In this case, the color of such a vertex  is chosen with probability $1/2$ independently of everything else. For the meeting time of the red avalanche and blue climber, a rational $\la$ implies cases when $J_b=J_r$ or $J_b=J_r+1$, i.e. the two processes jump at the same time after $t_c$. Here we list what happens in these cases, to be able to merge them in the cases above. We assume here that the adapted rule is so that there is a positive probability that a vertex becomes blue upon co-occupation.
\begin{enumerate}[\upshape (BR1)]
\item\label{case=small} $J_b=J_r$ and $d(t_c)<1$. Since there are lots of vertices just slightly smaller than $\wit u_{r^\ast_-}$, blue whp occupies some vertices up to that point, i.e. blue can increase the exponent by a factor $(\tau-2)^{-d(t_c)}$ again. This case can be  merged into Case B\ref{caseBsmall}.  
\item\label{case=large1} $J_b=J_r$ and $d(t_c)>1$. In this case, blue can occupy some of the vertices up to one $\Gamma_i^{\sss{(b)}}$ higher. This case can be merged into Case B\ref{caseBlarge}.
\item\label{case=large2} $J_b=J_r+1$ and $d(t_c)>1$. In this case, red jumps first and occupies all the vertices down to $\Gamma_{r^\ast_-+1}$, and then the two processes jump together, so blue can occupy some vertices right below that. This case can be merged into Case R\ref{caseRonce}. 
\end{enumerate}

\begin{remark}\normalfont
If the adapted rule is so that the probability that a vertex is going to be red with probability one upon co-occupation, then Case BR\ref{case=small} merges into Case R\ref{caseRsmall}, case BR\ref{case=large1} merges into Case B\ref{caseRonce}, and Case BR\ref{case=large2}  merges into Case R\ref{caseRtwice}. We see that the adapted rule only influences the place where the strict and non-strict inequality signs appear inside the indicators in $f(d(t_c), J_r, J_b)$ in \eqref{eq::fdtc} below. Hence, the main result still holds true with a slightly different $f(d(t_c), J_r, J_b)$. For other adapted rules, the function $f$ can be determined similarly.

\end{remark}
\begin{figure}
\centering
\subfigure[red jumps once, $d(t_c)>1$]{\label{fig::reddlarge1}
\includegraphics[keepaspectratio,width=5cm]{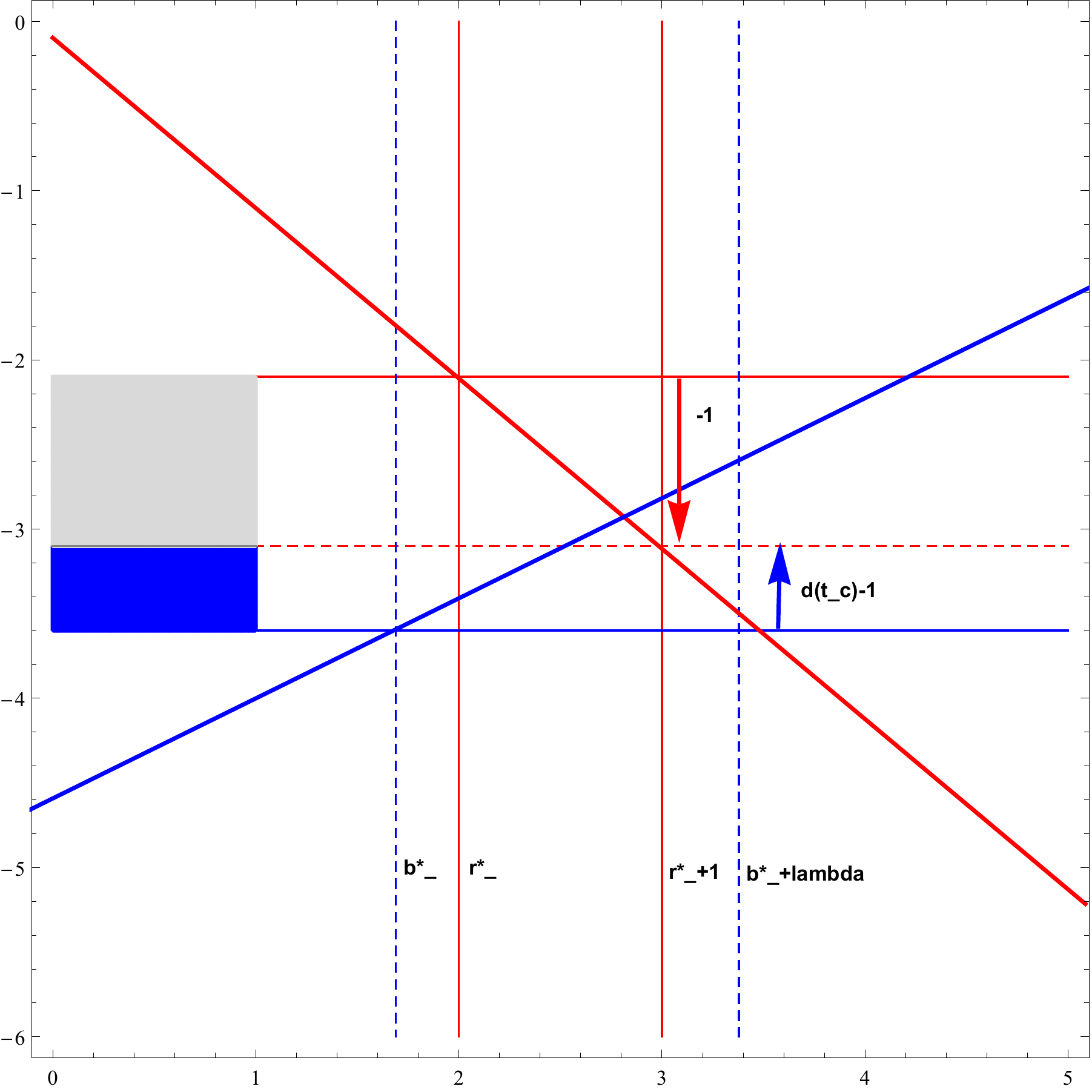}}
\subfigure[red can jump twice, $d(t_c)>1$]{\label{fig::reddlarge2}
\includegraphics[keepaspectratio,width=5cm]{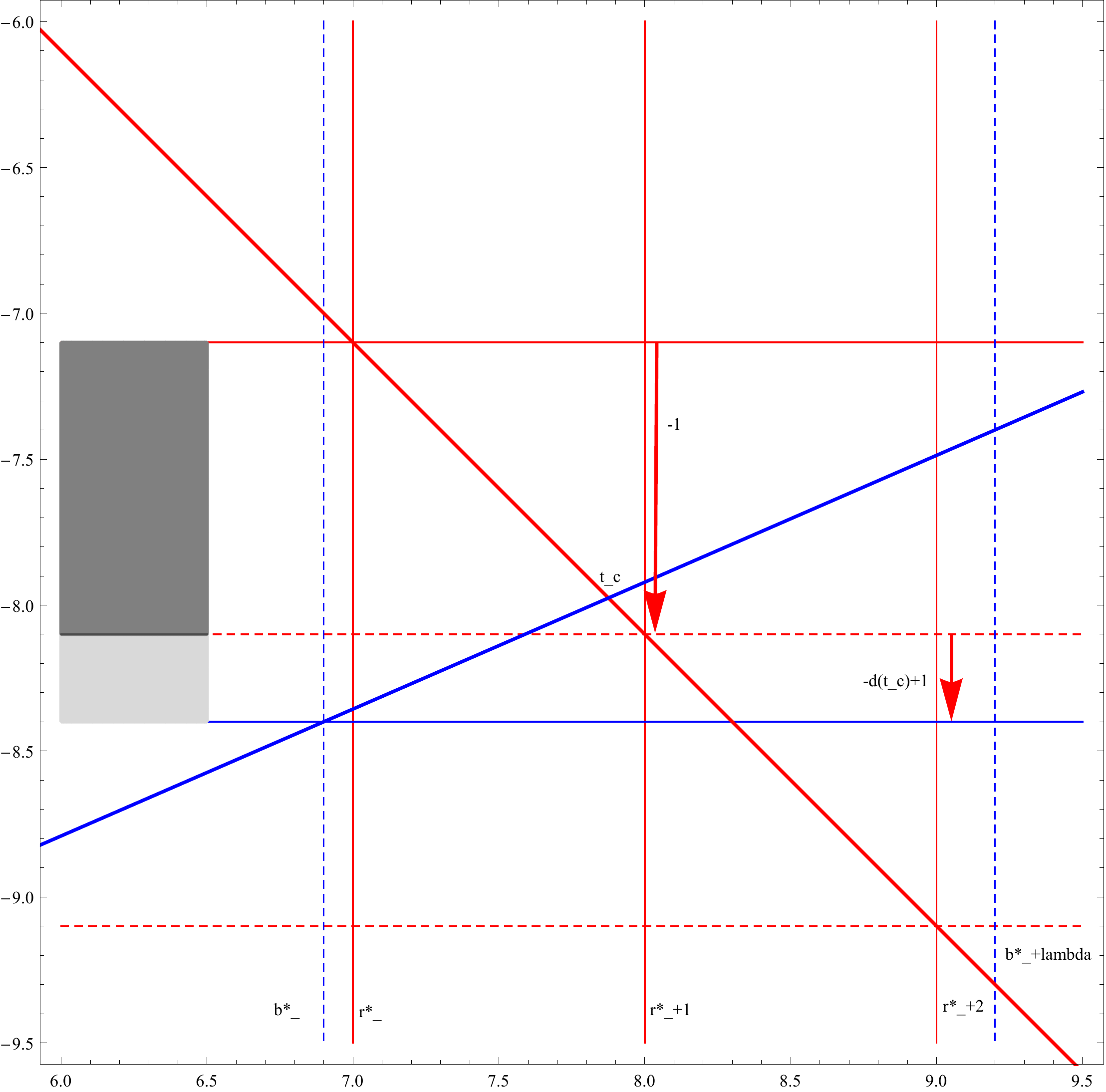}}
\subfigure[red jumps first, $d(t_c)<1$ ]{\label{fig::reddsmall}
\includegraphics[keepaspectratio,width=6cm]{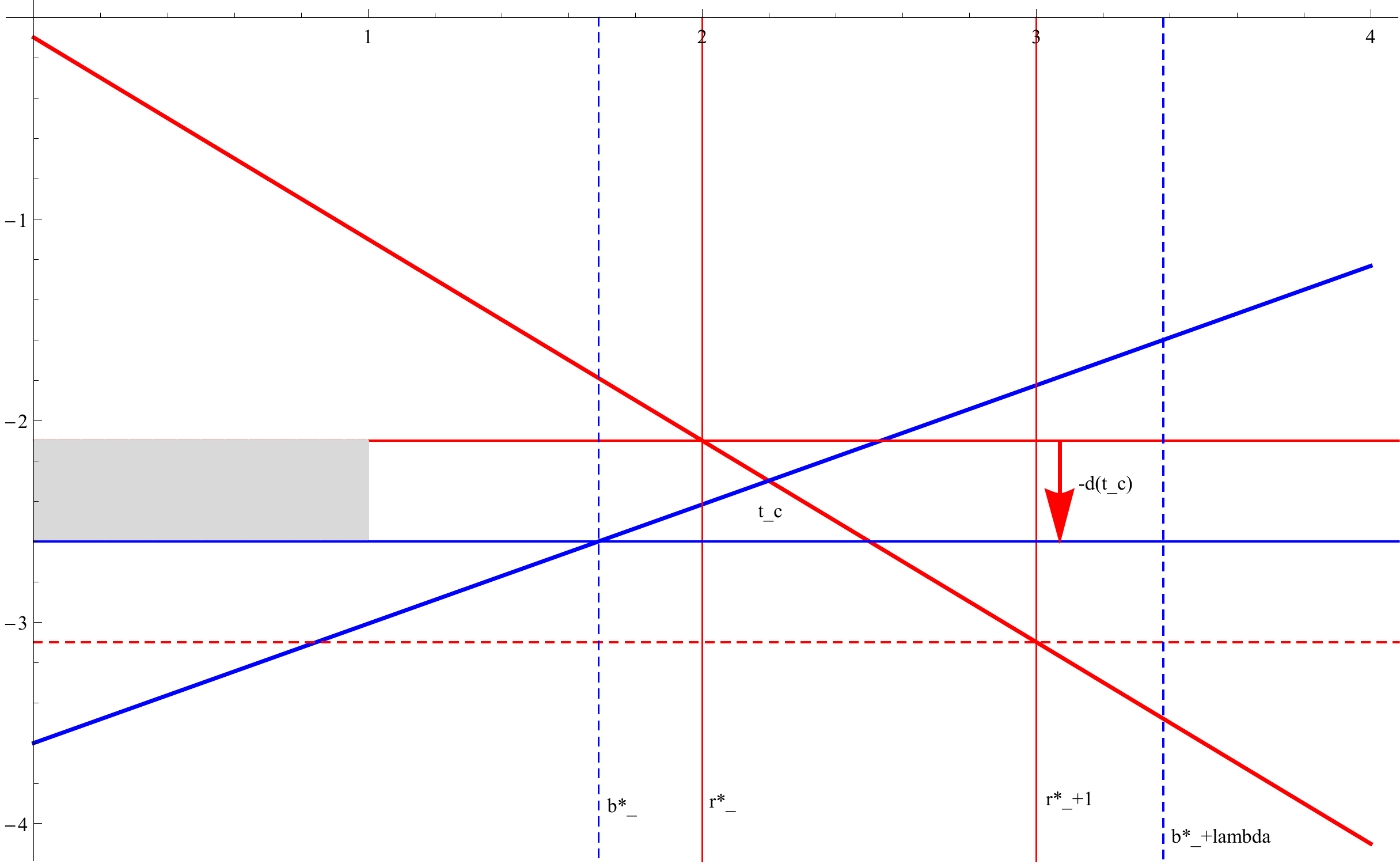}}
\caption{In these pictures the red and blue linear functions indicate the exponent of $1/(\tau-2)$ for the $\log$ (degrees)/$\log n$ of red and blue. The vertical lines indicate the time of the jumps before and after the intersection for the two colors, while the horizontal lines indicate where the two processes are (or would be) at the time of the jumps. In these pictures, red jumps first after the intersection, thus it can occupy more vertices: the exponent it can reach depends on how large the distance is between red and blue at their last jump before the intersection. The first two pictures show the two cases where the distance before the jump is more than one, and red can jump only once or at least twice after the intersection, respectively. The third pictures shows the case when distance before the jump is smaller than one.  The colored regions illustrate the maximal degree blue can reach. (In Fig~\ref{fig::reddlarge2} and \ref{fig::reddsmall} blue cannot increase its maximal degree anymore.)} \label{fig::redfirst}
\end{figure}

Now we formalize these heuristics by finishing the proof of Theorem \ref{thm::maxdegreeblue}. An elementary calculation is to solve \eqref{def::intersection} yielding
\be\label{eq::tc}\ba t_c&= \frac{\la}{\la+1} \frac{\log\log n +\log (\alpha/\Ybn)}{|\log (\tau-2)|} - \frac{T_r}{\la+1}\\
&=\frac{\la-1}{\la+1}\frac{\log\log n}{|\log (\tau-2)|}+\frac{ \log(\alpha^\la (\tau-1) \Yrn/ \Ybnla ) }{(\la+1)|\log (\tau-2)|} +\frac{1+b_n^{(r)}}{\la+1};\\
\frac{T_r+t_c}{\la}&=\frac{1}{\la+1} \frac{2\log\log n-\log (\Yrn \Ybn (\tau-1)/ \alpha )}{|\log (\tau-2)|}-\frac{1+b_n^{(r)}}{\la+1}.  \ea\ee
Hence, the red avalanche right before the intersection occupies every vertex with degree larger than $\wit u_{r_-^\ast}$, where $r_-^\ast=[t_c]=t_c-\{t_c\}$. Combining the formula of $t_c$ with $\wit u_\ell$ in \eqref{eq::ul}, we get
\be\label{eq::logul*} \log \big(\wit u_{r_-^\ast}\big)=  (\log n)^{\frac{2}{\la+1}} \left( \frac{\Ybnla}{\Yrn}\frac{\alpha}{\tau-1}\right)^{\frac{1}{\la+1}} (\tau-2)^{\frac{1+b_n^{(r)}}{\la+1}-1-\{t_c\}}(1+o(1)). \ee
On the other hand, since the last jump of blue before time $T_r+t_c$ is at time $\la[ (T_r+t_c)/\la]$, blue could do $(T_r+t_c)/\la - \{(T_r+t_c)/\la\}$ many up-jumps, hence right before the intersection, blue occupies some vertices that satisfy
\be\label{eq::logMbeforetc} \log \left( D_{\max}^{\sss{(b,n)}}(T_r+t_c)\!\right)=\!(\log n)^{\frac{2}{\la+1}}\! \left( \frac{\Ybnla}{\Yrn} \frac{\alpha}{\tau-1}\right)^{\frac{1}{\la+1}}\! (\tau-2)^{\frac{1+b_n^{(r)}}{\la+1}-1+\left\{\frac{T_r+t_c}{\la}\right\}}(1+o(1)),       \ee
where we have used \eqref{eq::tc} for $(T_r+t_c)/\la$ combined with \eqref{eq::maxdegreeatTr+l} at time $\left\lfloor(T_r+t_c)/\la\right\rfloor$.

Note that the formulas \eqref{eq::logul*} and \eqref{eq::logMbeforetc} only differ in the exponents of $1/(\tau-2)$, and this difference is exactly $d(t_c)$, introduces in defined in \eqref{def::dtc}. More precisely,
\be\label{eq::dtc2} \frac{\log\left( \frac{   \log \big(\wit u_{r_-^\ast}\big)   }{ \log \left( D_{\max}^{\sss{(b,n)}}(T_r+t_c)\!\right) }  \right) }{|\log (\tau-2)|} =  \{t_c\} + \left\{\frac{T_r+t_c}{\la}\right\}+o(1):=d(t_c)+o(1).\ee

Recall from \eqref{def::JrJb} that the remaining time to the next jump for red and blue after the intersection at time $T_r+t_c$ is denoted by $J_r$ and $J_b$, respectively.

Since \eqref{eq::logMbeforetc} is the exponent of the maximal degree vertex that blue occupies before the intersection, to determine the maximal degree of blue, we need to investigate whether blue can jump once more before the red avalanche reaches lower degrees than \eqref{eq::logMbeforetc}. If yes, then blue can gain an additional factor to the rhs of \eqref{eq::logMbeforetc}.

Obviously, if $d(t_c)<1$, then even though blue jumps first, it cannot increase its exponent by a whole factor $(\tau-2)^{-1}$, since vertices with degree larger than $(D_{\max}^{\sss{(b,n)}}(T_r+t_c))^{(\tau-2)^{-1}}$  are already all red. It is not hard to see that blue in this case will occupy some vertices `right below' $u_{[t_c]}$ (that is, say, higher than $\widetilde u_{[t_c]}/(C\log n)$), hence blue in this case can increase its exponent by $(\tau-2)^{d(t_c)}$.

This case illustrates that the additional factor that we need to add to the rhs of \eqref{eq::logMbeforetc} depends on two things: (1) which color jumps first (and possibly second) after the intersection and (2) whether $d(t_c)>1$ or not. There are five cases, described above (after formula \eqref{def::dtc}). As a result, the gain in the exponent for blue can be summarized by multiplying \eqref{eq::logMbeforetc} by the following function containing indicators for these five cases
(the order is Case B\ref{caseBsmall}, R\ref{caseRsmall}, B\ref{caseBlarge}, R\ref{caseRonce}, R\ref{caseRtwice} here, and the cases where $\la$ rational are also included):
\be\label{eq::fdtc} \ba f&\left(d(t_c), J_r, J_b\right):=\ind_{\left\{d(t_c)<1\right\}}\left(\ind_{\{J_b\le J_r\}}(\tau-2)^{-d(t_c)}+ \ind_{\{J_r<J_b\}}\right)\\
+ &\ind_{\left\{d(t_c)>1\right\}}\left( \ind_{\{J_b\le J_r\}}(\tau-2)^{-1} \!+\ind_{\{J_r<J_b\le J_r+1\}}(\tau-2)^{1-d(t_c)}+\ind_{\{J_r+1<J_b\}}\right).\ea\ee
Note that every expression in this formula,  i.e.,  $d(t_c), J_r, J_b$ are simple functions of the (random) fractional parts   $\{t_c\}, \left\{\frac{T_c+t_c}{\la}\right\}$ and  $b_n^{(r)}$. Further,  $\{t_c\}, \left\{\frac{T_c+t_c}{\la}\right\}$ and  $b_n^{(r)}$ depend only on $n$ and also on $\Yrn, \Ybn$. Hence knowing these values determines $f$ uniquely.

 Recall the value $\alpha$ from \eqref{def::alpha}, combine \eqref{eq::logMbeforetc} with the additional factor $f\left(d(t_c), J_r, J_b\right)$, so that we can introduce the `oscillation-filtering' random variable
\be\label{def::dn} C_n^{\max}(\Yrn, \Ybn)\!:=\! \left(\!\frac{(\tau-1)-(\tau-2)^{b_n^{(r)}}}{(\tau-1)^2}\!\right)^{\!\frac{1}{\la+1}}\!\!\!\!\!\!(\tau-2)^{\frac{-\la+b_n^{(r)}}{\la+1}+\left\{\frac{T_r+t_c}{\la}\right\}}\! f\left(d(t_c), J_r, J_b\right), \ee
which is oscillating with $n$ and is random, but is depending on the same randomness as $\Yrn, \Ybn$, i.e., they are defined on the same probability space. 

At this point we have shown that

\be\label{eq::logd}\log D_{\max}^{\sss{(b,n)}}(\infty) = \!(\log n)^{\frac{2}{\la+1}}\! \frac{\Ybnla}{\Yrn} C_n^{\max}(\Yrn, \Ybn) (1+o_{\Pv}(1)). \ee
To obtain the statement of Theorem \ref{thm::maxdegreeblue}, we have to check the conditions of Lemma \ref{claim::maxdegblue}. For this, note that the last layer of blue is reached after climbing $i_{\max}=\lfloor (T_r+t_c)/\la\rfloor-\lfloor t(n^{\vr'})/\la \rfloor+\ind_{\{ \text{Case B} \ref{caseBlarge}\}}$ many $\Gamma_i^{\sss{(b)}}$ layers,
and by \eqref{eq::tc} and \eqref{eq::an} we can calculate
\be\label{def::imax} i_{\max}:=\lfloor (T_r+t_c)/\la\rfloor-\lfloor t(n^{\vr'})/\la \rfloor+\ind_{\{ \text{Case B} \ref{caseBlarge}\}}=\frac{\la-1}{\la(\la+1)} \frac{\log\log n}{|\log (\tau-2)|} + O_\Pv(1). \ee
Thus,  $i_{\max}=O^\diamond(\frac{\la-1}{\la(\la+1)})$, hence the conditions of the lemma hold.  Thus, we get the first statement of Theorem \ref{thm::maxdegreeblue} by noting that $(\Yrn, \Ybn)\toindis (Y_r, Y_b)$.

By maximizing and minimizing the constants and the fractional parts in \eqref{def::dn} and in
the indicators in $f(d(t_c), J_r, J_c)$ we get the bounds in \eqref{eq::dnmaxbounds}.
This finishes the proof of Theorem \ref{thm::maxdegreeblue}.

\subsection{Number of maximum degree vertices}
With the last up-jump of blue, blue occupies some vertices of degree of order $D_{\max}^{\sss{(b,n)}}(\infty)$.
In this section we investigate \emph{how many} maximum degree vertices are reached by blue. We show that in some cases (namely, Cases B\ref{caseBsmall}, R\ref{caseRonce}) the number of these vertices is so large that it corresponds to an additional factor for the total number of half-edges in maximum degree vertices of blue.

More precisely, let us denote the set of outgoing half-edges from these maximal degree vertices by $\CMBN$, and its size by $\MBN$.
 Later we will determine how many vertices blue can occupy after this phase, and to be able to count that we need to know how many half-edges are in the highest layer of blue.
 \begin{lemma}\label{lem::verticeswithmaxdegree}
For $\MBN$, the number of outgoing half-edges from the set of maximal degree vertices, i.e. the sum of the forward degrees reached by blue for which \eqref{eq::maxdegreeatinfty} holds, we have
\[ \frac{\log\MBN}{(\log n)^{\frac{2}{\la+1}} C_n^{\mathrm{half-edge}}( \Yrn, \Ybn)  } \toinp \left( \frac{Y_b^\la}{Y_r}\right)^{1/(\la+1)},  \]
where  $C_n^{\mathrm{half-edge}}( \Yrn, \Ybn)\ge C_n^{\max}( \Yrn, \Ybn)$ is a bounded random variable given below in formula \eqref{def::D3}.
\end{lemma}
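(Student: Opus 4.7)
The plan is to re-enter the five-case analysis B\ref{caseBsmall}--R\ref{caseRtwice} from the proof of Theorem \ref{thm::maxdegreeblue}, and within each case to count not merely the maximum-degree blue vertex but the total forward-degree $\MBN$ contributed by all blue vertices in the top degree class. Taking logarithms of the dominant contribution will then define $C_n^{\mathrm{half-edge}}$ case-by-case, and the claimed convergence follows by dividing through by $(\log n)^{2/(\la+1)}$ and appealing to $(\Yrn,\Ybn)\toindis(Y_r,Y_b)$.

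In the easy cases B\ref{caseBlarge}, R\ref{caseRsmall}, R\ref{caseRtwice}, blue's last up-jump either does not happen or lands inside a single layer $\Gamma_{i_{\max}}^{\sss{(b)}}$. Lemma \ref{lem::numberofverticesinGamma} gives that the number of blue vertices in this last layer is at most $\exp\{K_2(\log n)^{(\la-1)/(\la(\la+1))}\log\log n\}$, and within such a layer the degrees are squeezed between $u_{i_{\max}}^{\sss{(b)}}$ and $\widehat u_{i_{\max}}^{\sss{(b)}}$, a ratio which by \eqref{eq::uiquotient} is only polylogarithmic. Hence $\log\MBN = \log D_{\max}^{\sss{(b,n)}}(\infty)(1+o_\Pv(1))$, and in these cases $C_n^{\mathrm{half-edge}}$ coincides with $C_n^{\max}$.

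The meaningful cases are B\ref{caseBsmall} and R\ref{caseRonce}. In both, blue performs one further up-jump from its highest-degree vertex $v_b$ \emph{after} the red avalanche has stalled at a threshold $\wit u\in\{\wit u_{r_-^\ast},\wit u_{r_-^\ast+1}\}$. The targets of this jump lie in the stripe $[\wit u/(C\log n),\wit u]$, which has not yet been painted red because the next relevant red jump only happens at time $T_r+t_c+J_r$ or later. Uniform pairing of half-edges in $\CMD$ shows that the number of neighbours of $v_b$ in the stripe is stochastically dominated by a binomial with parameters $(D_{v_b},\CE_{\ge \wit u/(C\log n)}/\CL_n)$. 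Combining Claim \ref{claim::Sbound} (which bounds the numerator by $n\wit u^{2-\tau}$ up to polylog factors) with the Chernoff bound of Lemma \ref{lem::chernoff} will yield concentration of this count around $D_{v_b}\cdot\wit u^{2-\tau}$. Multiplying by the typical degree $\wit u$ gives
\[ \log\MBN = \log D_{v_b} + (3-\tau)\log \wit u + O(\log\log n). \]
Substituting the identity $\log\wit u/\log D_{v_b}=(\tau-2)^{-d(t_c)}$ (resp.\ $(\tau-2)^{1-d(t_c)}$) from \eqref{eq::dtc2} and inserting \eqref{eq::logMbeforetc} produces an explicit expression for $C_n^{\mathrm{half-edge}}$ as $C_n^{\max}$ plus a strictly positive correction supported on B\ref{caseBsmall} and R\ref{caseRonce}. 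The inequality $C_n^{\mathrm{half-edge}}\ge C_n^{\max}$ reduces to $1+(3-\tau)(\tau-2)^{-d(t_c)}\ge(\tau-2)^{-d(t_c)}$, equivalently $(\tau-2)^{1-d(t_c)}\le 1$, which holds because $d(t_c)<1$ in Case B\ref{caseBsmall}, and analogously in Case R\ref{caseRonce}.

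The main technical obstacle will be in the counting step of the informative cases: one must rule out that many neighbours of $v_b$ in the stripe are already red through accidental short paths disjoint from the avalanche. I plan to handle this by a union bound in the spirit of Lemma \ref{lem::badpaths}, using that the red cluster carries far fewer than $n$ paired half-edges at time $T_r+t_c$, while the stripe contributes $\asymp n\wit u^{2-\tau}$ half-edges, so that the conditional pairing is still essentially uniform on the unexplored portion of the graph. Once this non-contamination estimate is in place, the above binomial concentration, combined with the calculation of the previous paragraph, delivers the lemma.
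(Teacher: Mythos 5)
Your proposal is correct and follows essentially the same route as the paper: the same five-case split, Lemma \ref{lem::numberofverticesinGamma} to dismiss the full-layer cases, and in Cases B\ref{caseBsmall}, R\ref{caseRonce} a binomial domination with Claim \ref{claim::Sbound} plus Lemma \ref{lem::chernoff} giving the extra factor $(3-\tau)\log\wit u$ and hence the same $g$-type correction and the inequality $C_n^{\mathrm{half-edge}}\ge C_n^{\max}$. The only small adjustment is that for the upper bound the binomial's number of trials should be all blue half-edges in the top layer, at most $A_{i_{\max}}\widehat u_{i_{\max}}^{\sss{(b)}}$ rather than just $D_{v_b}$, which is harmless since this ratio is log-negligible by the very bounds you already invoke.
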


\begin{proof}
Recall that $A_i$ denotes the number of vertices blue occupies in layer $\Gamma_i^{\sss{(b)}}$ upon reaching it, see \eqref{def::Ai}.
In the cases where blue finishes its last jump at a certain layer $\Gamma_i^{\sss{(b)}}$, that is,
in Case R\ref{caseRsmall} (Fig~\ref{fig::reddsmall} and Case R\ref{caseRtwice} (Fig~\ref{fig::reddlarge2}) and also in Case B\ref{caseBlarge} (Fig~\ref{fig::bluedbig})
the statement is a direct consequence of Lemma \ref{lem::numberofverticesinGamma}, since blue is stuck with its maximal degree at a given layer $\Gamma_{i_{\max}}^{\sss{(b)}}$, and hence $\MBN= A_{i_{\max}} D_{\max}^{\sss{(b,n)}}(\infty)(1+o(1))$. Taking logarithm we get
\be \label{eq::logsum}\log \MBN \le \log D_{\max}^{\sss{(b,n)}}(\infty)(1+o(1))+ \log  A_{i_{\max}}.\ee By \eqref{def::imax}, $i_{\max}=O^\diamond(\frac{\la-1}{\la(\la+1)})$ in Lemma \ref{lem::numberofverticesinGamma}, so we can use the bound in \eqref{eq::aifinal} with $x=(\la-1)/\la(\la+1)$.
 Hence, the last term in \eqref{eq::logsum} disappears when we divide by $(\log n)^{2/(\la+1)}$.

We are left with handling the cases where the last jump of blue is not a full layer, i.e., Cases B\ref{caseBsmall} and R\ref{caseRonce}.
 In these cases, after reaching layer $\Gamma_{i_{\max}}^{\sss{(b)}}$, blue still jumps up, but not a full layer:
 due to the presence of red the forward degrees are truncated at $\wit u_{r^\ast_-}$ in Case B\ref{caseBsmall} and at $\wit u_{r^\ast_-+1}$ in Case R\ref{caseRonce}.

First, we apply Lemma \ref{lem::numberofverticesinGamma} to see that $\log A_{i_{\max}}$ in the last `full' layer $\Gamma_{i_{\max}}^{\sss{(b)}}$ is small.
Let us recall the notation $u_{i_{\max}}^{\sss{(b)}}=D_{\max}^{\sss{(b,n)}}(T_r+t_c)$, and  introduce the extra factor of the $\log$(degrees) reached at
  the last up-jump of blue by \[ \gamma:=(\tau-2)^{-d(t_c)\mathbb \ind\{B\ref{caseBsmall}\}+(1-d(t_c)) \ind\{R\ref{caseRonce}\} }. \]  Then we introduce a new layer
\[ \Gamma^\diamond:=\left\{ v\in \CMD:   d_v \ge  \frac{(u^{\sss{(b)}}_{i_{\max}})^\gamma}{(\log n)^{1/(\tau-2)} } \right\},\]
and we denote the number of half-edges in this set by $\CE_{\gamma}$.

By Lemma \ref{lem::numberofverticesinGamma}, whp blue is not reaching higher degrees than $\widehat u_{i_{\max}}^{\sss{(b)}}$ at time $i_{\max}$. Recall that  there are
 $A_{i_{\max}}$ many blue vertices in layer $\Gamma_{i_{\max}}^{\sss{(b)}}$. Hence, the total number of blue half-edges in this layer is at most $A_{i_{\max}}\widehat u_{i_{\max}}^{\sss{(b)}}$. Thus, the number of vertices in $\Gamma^\diamond$ to which blue is connected is dominated by
 \be\label{eq::bindom3}\mathcal B \cap \Gamma^\diamond \  {\buildrel {d}\over{ \le }}\     {\sf Bin} \left( A_{i_{\max}} \widehat u_{i_{\max}}^{\sss{(b)}}, \frac{\CE_{\gamma}}{\CL_n (1+o(1))}  \right). \ee
Using Claim \ref{claim::Sbound}, $\CE_\gamma\le (u_{i_{\max}}^{\sss{(b)}})^{-\gamma(\tau-2)} $ whp.
 Thus, conditioned on $A_{i_{\max}}$, the expected value of the Binomial variable in \eqref{eq::bindom3} is bounded above by
 \[  \frac{2 C_1^2}{c_1} A_{i_{\max}} \widehat u_{i_{\max}}^{\sss{(b)}}  (u_{i_{\max}}^{\sss{(b)}})^{-\gamma(\tau-2)}
  = \frac{2 C_1^2}{c_1} A_{i_{\max}}\frac{\widehat u_{i_{\max}}^{\sss{(b)}}}{u_{i_{\max}}^{\sss{(b)}}} (u_{i_{\max}}^{\sss{(b)}})^{1-\gamma(\tau-2)}.  \]
Since red occupies every vertex with degree larger than $(u^{\sss{(b)}}_{i_{\max}})^\gamma$, the previous formula bounds the number of vertices with degree in the interval $[(u^{\sss{(b)}}_{i_{\max}})^\gamma/C\log n, (u^{\sss{(b)}}_{i_{\max}})^\gamma)$. Thus, the total number of half-edges going out from maximal degree vertices can be bounded by
 \be\label{eq::addgamma} \frac{2 C_1^2}{c_1} A_{i_{\max}}\frac{\widehat u_{i_{\max}}^{\sss{(b)}}}{u_{i_{\max}}^{\sss{(b)}}} (u_{i_{\max}}^{\sss{(b)}})^{1+\gamma(3-\tau)}.\ee
  Since $i_{\max}=O^\diamond(\frac{\la-1}{\la(\la+1)})$, we can use \eqref{eq::aifinal} and the calculations in the proof of Lemma \ref{lem::numberofverticesinGamma} to see that  \[ A_{i_{\max}}\frac{\widehat u_{i_{\max}}^{\sss{(b)}}}{u_{i_{\max}}^{\sss{(b)}}}\le (A_{O^\diamond(\frac{\la-1}{\la(\la+1)})})^2\] is still small, i.e., it disappears when taking logarithm and dividing by $(\log n)^{2/(\la+1)}$. Hence, the main contribution comes from $( u_{i_{\max}}^{\sss{(b)}} )^{1+\gamma(3-\tau)}=(D_{\max}^{\sss{(b,n)}}(T_r+t_c))^{1+\gamma(3-\tau)}$.

  Hence, in Cases  B\ref{caseBsmall} and R\ref{caseRonce}, blue can get more half-edges than of order $D_{\max}^{(b,n)}(\infty)$. To get the total number of half-edges at the last up-jump, we need to modify the function $f(d(t_c), J_r,J_b)$.
 An elementary rearranging of the indicators of the cases and the constants shows that the extra factor needed for \eqref{eq::logMbeforetc} to get $\MBN$ is
 \be\label{def::gfunction} \ba &g(d(t_c), J_r, J_b):=\ind_{\left\{d(t_c)<1\right\}}\left(\ind_{\{J_b\le J_r\}}(1+(3-\tau)(\tau-2)^{-d(t_c)}+ \ind_{\{J_r<J_b\}}\right)\\
&+ \ind_{\left\{d(t_c)>1\right\}}\left( \ind_{\{J_b\le J_r\}}(\tau-2)^{-1} +\ind_{\{J_r<J_b\le J_r+1\}}(1+(3-\tau)(\tau-2)^{1-d(t_c)})\right)\\
&+\ind_{\left\{d(t_c)>1\right\}}\ind_{\{J_r+1<J_b\}}.\ea\ee
Then the normalizing constant for $\MBN$ is given by
\be\label{def::D3}C_n^{\text{half-edge}}(\Yrn, \Ybn)\!:=\! \left(\!\frac{(\tau-1)-(\tau-2)^{b_n^{(r)}}}{(\tau-1)^2}\!\right)^{\!\frac{1}{\la+1}}\!\!\!\!\!\!(\tau-2)^{\frac{-\la+b_n^{(r)}}{\la+1}+\left\{\frac{T_r+t_c}{\la}\right\}}\! \ g\left(d(t_c), J_r, J_b\right). \ee
This finishes the proof of Lemma \ref{lem::verticeswithmaxdegree}.
\end{proof}

Before moving on to the next section, let us introduce the time when the maximal degree is reached, which is nothing else but the time of the last possible up-jump of blue, i.e.,
\be\label{def::t_b}\ba t_b&:=\la \left(\left\lfloor\frac{T_r+t_c}{\la} \right\rfloor +\ind_{\{J_b<J_r\}}+\ind_{\{J_r<J_b<J_r+1\}}\ind_{\{d(t_c)> 1\}} \right)\\
&= T_r+t_c + \la \left(\ind_{\CE} - \left\{\frac{T_r+t_c}{\la}\right\}\right) \\
&= \frac{\la}{\la+1} \frac{2\log\log n-\log (\Yrn \Ybn (\tau-1)/ \alpha )}{|\log (\tau-2)|}-\frac{1+b_n^{(r)}}{\la+1} +
\la \left(\ind_{\CE} - \left\{\frac{T_r+t_c}{\la}\right\}\right),
\ea \ee
where $\CE$ stands for the event that blue has an additional up-jump after time $t_c$, i.e.\ Case B\ref{caseBsmall}, B\ref{caseBlarge} or R\ref{caseRonce} happens.

\section{Path counting methods for blue} By time $t_b$, only $o(n)$ vertices are reached by red and blue together -- most of the vertices are still not colored. Thus, it still remains to determine how many vertices blue can reach after time $t_b$. We do this via giving  matching upper and  lower bounds on how many vertices blue occupies in this last phase.

For the upper bound, the idea is that we count the size of the local neighborhood of the half edges that are just occupied at time $t_b$. Since the red avalanche continues to be in its avalanche phase and occupies all vertices of smaller and smaller degrees as time passes, the spreading of blue is more and more restricted, so this local neighborhood is quite small. We call this the \emph{optional cluster} of blue. Since its size is random, we give a concentration result on its size, i.e.,\ we give a concentrated upper bound on what blue can get.

For the lower bound, we estimate how much the red color might `bite out' of this optional cluster. This can happen since even a constant degree vertex might by chance be close to both colors. We show that this intersection of the clusters is negligible compared to the size of the optional cluster.

We start describing the first step -- the optional cluster of blue -- in more detail. At time $t_b$, the half-edges in the set $\CMBN$ start their own \emph{exploration clusters}, i.e.,\ an exploration process from the half-edge to not-yet
 occupied vertices. At time $t_b+\la j$, we color every vertex $v$, whose distance is exactly $j$ from some half-edge $h$ in $\CMBN$, and the degrees of vertices on the path from $h$ to $v$ are less than what red occupies at that moment, blue.  That is, the degree of the $j$th vertex on the path must be less than $\wit u_{\lfloor t_b-\la i+\la j-T_r\rfloor}$.
 We do this via estimating the number of paths with degree restrictions from $\CMBN$ and call this the \emph{optional cluster of blue}, denote the set by $\opt_{\max}$ and its size by $\ops_{\max}$. Corollary \ref{cor::chebisev} below determines its asymptotic behavior.

On the other hand, not just the half-edges in $\CMBN$ can gain extra blue vertices: from half-edges in $\CA_{i_{\max}-z},\  z=0,1,2\dots$ the explorations start a bit earlier (at time $t_b-\la z$) towards small degree vertices. Let us denote the vertices reached via half-edges from layer $\CA_{i_{\max}-z}\setminus \CA_{i_{\max}-z+1}$ by $\opt_{-z}, z\ge 0.$ At time $t_b-\la z+\la j$, we color a vertex $v$ blue if its distance is exactly $j$ from a half-edge $h$ in $\CA_{i_{\max}-z}$, and the degrees of vertices on the path from $h$ to $v$ are less than $u_{i_{\max}-z+j}^{(b)}$ and also what red occupies at that moment, i.e., the degree of the $j$th vertex on the path must be less than $\min\{u_{i_{\max}-z+j}^{(b)}, \wit u_{\lfloor t_b-\la z+\la j-T_r\rfloor}$\}.  This extra truncation is needed since we want to avoid \emph{double counting}, that is, we do not want to count vertices explored from $\CA_{i_{\max}-z}$ towards $\CA_{i_{\max}-z+1}$, hence the additional restriction. We show that the total number of optional blue vertices in lower layers, $\sum_{z\ge 0}\ops_{-z}$ with these additional explorations is at most the same order as $\ops_{\max}$ in Lemma \ref{lem::opt-z-lemma}.

For the lower bound of what blue can occupy after time $t_b$, note that not every vertex in $\opt_{\max}$ will be occupied by blue: red can still bite out some parts of these vertices by simply randomly being close to some parts of the blue cluster. We estimate the number of vertices in the intersection of $\opt_{\max}$ and red, and then subtracting the gained estimate from the lower bound on $\ops_{\max}$ gives a lower bound on what blue occupies from the graph after $t_b$, see Lemma \ref{lem::red-blue-intersection}. Now we turn to the calculations.

We introduce the expected truncated degree of a vertex that is distance $j$ away from the set $\CMBN$
\be\label{def::nuj} \nu_j:=\Ev\left[B \ind_{\left\{B< \wit u_{\lfloor t_b+\la j-T_r\rfloor}\right\}} \right]= \frac{1}{\Ev[D]}\int_0^{\wit u_{\lfloor t_b+\la j-T_r\rfloor}}\!\!\!\![1-F^\ast(x)] \mathrm dx .  \ee
Then, by \eqref{eq::F},
\be\label{eq::nujbound}  \frac{c_1}{\Ev[D]} \left(\wit u_{\lfloor t_b+\la j-T_r\rfloor}\right)^{3-\tau}\le \nu_j \le  \frac{C_1}{\Ev[D]} \left(\wit u_{\lfloor t_b+\la j-T_r\rfloor}\right)^{3-\tau}.\ee
Let us also define
\[ \kappa_j:=\frac{1}{\Ev[D]}\Ev[D(D-1)(D-2)\ind_{\left\{D< \wit u_{\lfloor t_b+\la j-T_r\rfloor}\right\}}] \]
Then again by \eqref{eq::F},
\[ \frac{c_1}{\Ev[D]} \left(\wit u_{\lfloor t_b+\la j-T_r\rfloor}\right)^{4-\tau}\le \kappa_j \le  \frac{C_1}{\Ev[D]} \left(\wit u_{\lfloor t_b+\la j-T_r\rfloor}\right)^{4-\tau}.\]
Let us call a path of length $k$ from $\CMBN$ with vertices $\left(\pi_j\right)_{j\le k}$ \emph{good} if $\pi_j\le \wit u_{\lfloor t_b+\la j-T_r\rfloor}$, and \emph{good-directed}  if  $ u_{\lfloor t_b+\la {j+1}-T_r\rfloor}\le \pi_j\le \wit u_{\lfloor t_b+\la j-T_r\rfloor}$.
\begin{lemma}\label{lem::wihZk} For $k\ge 0$, denote by $\ops_{\max}(k), \ops_{\max}^{\text{d}}(k)$ the number of vertices that are on good and good-directed paths of distance $k$ away from $\CMBN$, respectively.  Then there exist positive constants $0<c_2\le C_2 < \infty$ such that
\be \label{eq::wihZk_expected}   \MBN \cdot \prod_{j=1}^{k} \nu_j \le \Ev[\ops_{\max}(k)\mid \MBN ]\le  \MBN \cdot \prod_{j=1}^{k} \nu_j \cdot \left(1+O\left(\frac{k^2}{n}\right)\right)\ee
and
\be \label{eq::wihZk_expected2}    \MBN \cdot \prod_{j=1}^{k} (\nu_j-\nu_{j+1}) \le \Ev[\ops_{\max}^{\text{d}}(k)\mid \MBN ]\le  \MBN \cdot \prod_{j=1}^{k} (\nu_j-\nu_{j+1}) \cdot \left(1+O\left(\frac{k^2}{n}\right)\right)\ee
while for the variance of the latter:
\be \ba \label{eq::wihZk_variance}&\Vv[\ops_{\max}^{\text{d}}(k)|\MBN] \le \Ev[\ops_{\max}^{\text{d}}(k)|\MBN] \\
&\ + \overline{\Ev[\ops_{\max}(k)|\MBN]}^2 \cdot \left(\frac{\nu_{k-1}}{(\nu_{k-1}-1)}\frac{\kappa_1}{\nu_{1}^2}\left(\frac{1}{\MBN} + \frac{2}{\CL_n}\right)+ \frac{\nu_{k-1}^2}{(\nu_{k-1}^2-1)^2}\frac{\kappa_1^2}{\nu_{1}^4} \frac{2}{\MBN  \CL_n}+e_{k,n}\right), \ea\ee
where  $\overline{\Ev[\ops_{\max}^{\text{d}}(k)|\MBN]}$ means the upper bound on $\Ev[\ops_{\max}^{\text{d}}(k)|\MBN]$ in \eqref{eq::wihZk_expected2}, and the error term $e_{k,n}$ is
\be\ba\label{def::ekn} e_{k,n}&=\left( \prod_{i=1}^{k} \frac{\CL_n-2i+1}{\CL_n-2i-2k+1}-1\right)\\
+&   \left(1+\frac{\kappa_1\nu_{k-1}}{\nu_1^2}\frac{1}{\MBN} \right)\left(1+\frac{\kappa_1 \nu_{k-1}}{\nu_1^2 }\frac{1}{c\CL_n}\right)\frac{k}{\nu_{k-1}-1}\left(e^{k^2 \kappa_1^2 \nu_{k-1}/(\nu_{1}^4\CL_n)}-1\right).\ea\ee
\end{lemma}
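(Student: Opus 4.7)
The plan is to view $\ops_{\max}(k)$ and $\ops_{\max}^{\text{d}}(k)$ as sums of indicator variables, one for each admissible length-$k$ path emanating from a half-edge in $\CMBN$, and then compute the first and second moments by exploiting the uniform pairing structure of the configuration model. Throughout, I condition on the whole degree sequence and on which half-edges make up $\CMBN$, so that ``expectation'' refers only to the randomness of the remaining pairing.

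For the first moment, a length-$k$ good path from a half-edge $h\in\CMBN$ is specified by an ordered tuple of intermediate vertices $(\pi_1,\dots,\pi_k)$ and an ordered choice of distinct half-edges at each $\pi_j$ (one incoming, one outgoing). Given any such tuple using $2k$ half-edges, the probability that the uniform pairing realizes exactly this path equals
\[ \prod_{i=1}^{k} \frac{1}{\CL_n - 2i + 1}. \]
Summing over admissible tuples and recognising that the contribution of the $j$-th step is the truncated size-biased degree expectation $\nu_j$ (respectively $\nu_j-\nu_{j+1}$ in the directed case, by the definition in \eqref{def::nuj} together with the monotonicity of the threshold $\wit u_{\lfloor t_b+\la j-T_r\rfloor}$ in $j$) yields the lower bounds in \eqref{eq::wihZk_expected} and \eqref{eq::wihZk_expected2}. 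The matching upper bounds pick up a multiplicative factor $\prod_{i=1}^{k}\CL_n/(\CL_n-2i+1)=1+O(k^2/\CL_n)=1+O(k^2/n)$, coming from the half-edge depletion along a single path.

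For the variance, writing $\ops_{\max}^{\text{d}}(k)=\sum_P \ind_{E_P}$, where $E_P$ denotes the event that the path $P$ is realized in the pairing, I would decompose
\[ \Vv[\ops_{\max}^{\text{d}}(k)\mid \MBN]=\sum_{P}\Vv(\ind_{E_P})+\sum_{P\ne P'}\Cov(\ind_{E_P},\ind_{E_{P'}}). \]
The first sum is bounded by $\Ev[\ops_{\max}^{\text{d}}(k)\mid \MBN]$, matching the first contribution on the right of \eqref{eq::wihZk_variance}. For the covariance sum I would classify ordered pairs $(P,P')$ by the combinatorial structure of their overlap: pairs sharing their starting half-edge (responsible for the $1/\MBN$ factor), pairs that merge at some later intermediate vertex with both incident half-edges in common (responsible for the $2/\CL_n$ factor, two because each path can merge forward or backward), and the higher-order regime of two simultaneous overlaps (giving the $2/(\MBN\CL_n)$ factor with $\kappa_1^2/\nu_1^4$). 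Each shared vertex contributes a factor $\kappa_j/\nu_j^2$, which by the monotonicity $\kappa_j\le\kappa_1$, $\nu_j\ge\nu_1$ can be uniformly dominated by $\kappa_1/\nu_1^2$; the sum over the possible depth of the shared vertex is geometric in $\nu_{k-1}^{-1}$, producing the factor $\nu_{k-1}/(\nu_{k-1}-1)$ (and its square for the double-overlap term). The residual term $e_{k,n}$ collects (a) the depletion factor $\prod_{i=1}^{k}(\CL_n-2i+1)/(\CL_n-2i-2k+1)-1$ coming from pairing $4k$ half-edges in two paths simultaneously, and (b) the exponential-type correction $\exp\bigl(k^2\kappa_1^2\nu_{k-1}/(\nu_1^4\CL_n)\bigr)-1$ obtained by Taylor expansion after summing over all possible numbers of simultaneous shared vertices.

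The main obstacle is the variance bookkeeping rather than any single delicate estimate. One must classify all overlap patterns of two paths, check which are positively correlated (those sharing either a common half-edge or a common vertex), avoid double-counting, and verify that the contribution of each class is dominated by the explicit terms on the right-hand side of \eqref{eq::wihZk_variance}. The monotonicity of $\nu_j$ and $\kappa_j$ in $j$ is what allows the uniform domination by their $j=1$ values, while the geometric summation in the number of coincidences is what produces the closed-form factors $\nu_{k-1}/(\nu_{k-1}-1)$ and $\nu_{k-1}^2/(\nu_{k-1}^2-1)^2$. This is standard configuration-model path counting in the spirit of \cite[Vol.~II]{H10}, but executing it while respecting the precise shape of \eqref{eq::wihZk_variance} and \eqref{def::ekn} requires careful accounting.
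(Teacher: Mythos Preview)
Your approach is the same as the paper's---path counting with first and second moment estimates, classifying pairs of paths by their overlap structure---but two points deserve correction or sharpening.

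First, your monotonicity claim is backwards: since $\wit u_\ell$ is decreasing in $\ell$, so is $\nu_j$, hence $\nu_j\le\nu_1$, not $\ge$. The conclusion $\kappa_j/\nu_j^2\le\kappa_1/\nu_1^2$ is nevertheless correct, but only because this ratio equals $\bigl(\wit u_{\lfloor t_b+\la j-T_r\rfloor}\bigr)^{\tau-2}$ explicitly, which decreases in $j$; you cannot get it from separate monotonicity of numerator and denominator.

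Second, the paper organises the covariance sum more explicitly than your informal ``shared starting half-edge / shared later vertex'' trichotomy. It decomposes pairs $(\pi,\rho)$ by the number $\ell=|\pi\cap\rho|$ of shared \emph{edges} and the number $m$ of \emph{excursions} (maximal connected pieces of $\pi\setminus\rho$). The crucial structural input you do not mention is that the layers $\Lambda_j^{\text{d}}$ are pairwise \emph{disjoint}, so $\pi_i$ and $\rho_j$ can only coincide when $i=j$; this forces every excursion to have equal length on both paths and reduces each shape to a composition of $\ell$ and of $k-\ell$. In this framework: $\ell=k$ gives the $\Ev[\ops_{\max}^{\text{d}}(k)\mid\MBN]$ term; $\ell=0$ gives the depletion factor in $e_{k,n}$; $m=1$ gives the main $\kappa_1/\nu_1^2$ terms; and $m\ge2$, after summing $(m-1)!^{-1}\bigl(k^2\kappa_1^2\nu_{k-1}/(\nu_1^4\CL_n)\bigr)^{m-1}$, gives the exponential correction. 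Importantly, the $\kappa_j/\nu_j^2$ factors come only from the degree-$3$ \emph{branching} vertices at excursion endpoints, while shared degree-$2$ vertices on $\pi\cap\rho$ contribute $1/\nu_j\le1/\nu_{k-1}$ each (this geometric sum over $\ell$ is what produces $\nu_{k-1}/(\nu_{k-1}-1)$). Your sentence ``each shared vertex contributes a factor $\kappa_j/\nu_j^2$'' conflates these two roles.
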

 The proof of this lemma uses path counting methods and is similar to that of \cite[Lemma 5.1]{Janson10}. Similar techniques can also be found in \cite[Section 10.4.2]{H10}. Since our case is slightly different than the cases handled there, we work out the details in Appendix \ref{app::paths}.

Now we state the immediate corollary of Lemma \ref{lem::wihZk}. Recall the definition of $t_b$ from \eqref{def::t_b}.
\begin{corollary}[Chebyshev's inequality for blue vertices]\label{cor::chebisev} Take $c_3\le \tfrac{2-\ve}{\la+1} |\log(\tau-2)|^{-1}$ and any $k\le c_3\log\log n.$
Then, conditioned on the number of blue half-edges $\MBN$ at time $t_b$, the number of vertices optionally occupied by blue up to time $t_b+ \la k$ satisfies that, conditionally on $\MBN$,
\[\frac{\log (\ops_{\max}(k))}{\log \MBN + \sum_{i=1}^{k-1} \log \nu_i} \toinp 1. \]
\end{corollary}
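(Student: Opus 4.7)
My plan is a first-and-second-moment argument built on Lemma \ref{lem::wihZk}. For the upper bound I would apply Markov's inequality to $\ops_{\max}(k)$ directly; for the lower bound I use the monotonicity $\ops_{\max}(k)\ge\ops_{\max}^{\text{d}}(k)$ together with Chebyshev's inequality applied to the good-directed variant, whose variance bound is supplied by the lemma. All statements are conditional on $\MBN$, and the convergence takes place at the logarithmic scale, which allows polylogarithmic slack throughout.

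For the upper bound, Markov's inequality with threshold $t_n=(\log n)^2$ combined with \eqref{eq::wihZk_expected} yields
\[
\log \ops_{\max}(k)\;\le\;2\log\log n+\log \MBN+\sum_{j=1}^{k}\log \nu_j + O\bigl(k^2/n\bigr)\quad\text{whp.}
\]
Since $\log\MBN\asymp (\log n)^{2/(\la+1)}$ by Lemma \ref{lem::verticeswithmaxdegree}, and the partial sum of $\log \nu_j$ has the same polynomial-in-$\log n$ order by \eqref{eq::nujbound} and \eqref{eq::ul}, the $2\log\log n$ error and the mismatch between sums over $1\le j\le k$ and $1\le i\le k-1$ are absorbed into a $(1+o(1))$ factor (note that by the time $\wit u_{\lfloor t_b+\la k-T_r\rfloor}=O(1)$, the tail term $\log\nu_k$ is only $O(\log\log n)$).

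For the lower bound I need $\Vv[\ops_{\max}^{\text{d}}(k)\mid\MBN]/\Ev[\ops_{\max}^{\text{d}}(k)\mid\MBN]^2 \to 0$. The first summand of \eqref{eq::wihZk_variance} contributes $1/\Ev[\ops_{\max}^{\text{d}}(k)\mid \MBN]$, which vanishes since the expectation grows polynomially in $\log n$. The remaining prefactors satisfy
\[
\frac{\kappa_1}{\nu_1^2}\asymp \wit u_\star^{\tau-2},\qquad \frac{\kappa_1^2\nu_{k-1}}{\nu_1^4}\le \wit u_\star^{\tau-1},\qquad \wit u_\star:=\wit u_{\lfloor t_b+\la-T_r\rfloor},
\]
and combining \eqref{eq::ul}, \eqref{eq::tc} and \eqref{def::t_b} one finds $\log\wit u_\star\asymp (\log n)^{2/(\la+1)}$, so $\wit u_\star^{\tau-1}=n^{o(1)}$. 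Each correction is then $O(k^2 \wit u_\star^{\tau-1}/n)=o(1)$ for $k\le c_3 \log\log n$, and the exponential factor $\exp(k^2\kappa_1^2 \nu_{k-1}/(\nu_1^4\CL_n))-1$ inside $e_{k,n}$ is controlled by the same quantity (while the product-factor part of $e_{k,n}$ is merely $O(k^2/\CL_n)=o(1)$). Chebyshev then yields $\ops_{\max}^{\text{d}}(k)\ge \tfrac12\Ev[\ops_{\max}^{\text{d}}(k)\mid\MBN]$ whp. The recursion $\wit u_{\ell+1}=(C\log n)\wit u_\ell^{\tau-2}$ makes $\wit u_{\ell+1}$ superpolynomially smaller than $\wit u_\ell$ in the relevant range, hence $\nu_j-\nu_{j+1}=\nu_j(1-o(1))$ and $\log(\nu_j-\nu_{j+1})=\log \nu_j(1+o(1))$, which matches the stated denominator.

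The main obstacle is verifying the exponential term in $e_{k,n}$: one must pin down $\wit u_\star$ as subpolynomial in $n$ via the explicit expressions for $t_b-T_r$, which is precisely what forces the admissible range $k\le c_3\log\log n$ with $c_3\le (2-\ve)/((\la+1)|\log(\tau-2)|)$. Once this is in hand, the Markov-and-Chebyshev template closes the argument and the stated convergence in probability follows.
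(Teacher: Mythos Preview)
Your approach is essentially the paper's: first and second moment on the directed paths via Lemma~\ref{lem::wihZk}, with Markov for the upper tail and Chebyshev on $\ops_{\max}^{\text{d}}(k)$ for the lower tail. The paper organizes the split slightly differently (it decomposes $\ops_{\max}(k)$ into directed and non-directed parts and centers the Chebyshev step around $\Ev[\ops_{\max}(k)]$ rather than $\Ev[\ops_{\max}^{\text{d}}(k)]$), but the substance is the same, and your version is arguably cleaner.

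There is, however, a real gap in your variance control. You write that ``each correction is then $O(k^2 \wit u_\star^{\tau-1}/n)=o(1)$'', but this only covers the terms in \eqref{eq::wihZk_variance} that carry a factor $1/\CL_n$. The dominant correction is
\[
\frac{\nu_{k-1}}{\nu_{k-1}-1}\,\frac{\kappa_1}{\nu_1^2}\cdot\frac{1}{\MBN}\;\asymp\;\frac{\wit u_\star^{\tau-2}}{\MBN},
\]
and $\MBN$ is \emph{not} of order $n$: by Lemma~\ref{lem::verticeswithmaxdegree}, $\log\MBN\asymp(\log n)^{2/(\la+1)}$, which is the \emph{same} scale as $\log\wit u_\star$. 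So the ``subpolynomial over $n$'' bound does not apply here, and you genuinely have to compare constants. The paper supplies the missing ingredient: by the very definition of $t_b$ in \eqref{def::t_b}, at time $t_b+\la$ the red avalanche threshold has already dropped below $D_{\max}^{\sss{(b,n)}}(\infty)$ (otherwise blue could still jump up), hence
\[
\wit u_\star=\wit u_{\lfloor t_b+\la-T_r\rfloor}=o\bigl(D_{\max}^{\sss{(b,n)}}(\infty)\bigr)\le o(\MBN),
\]
and therefore $\wit u_\star^{\tau-2}/\MBN\le \MBN^{\tau-3}\to 0$ since $\tau<3$. Once you insert this observation, your argument closes. (You should also note that the same comparison is needed to bound the ratio $\overline{\Ev[\ops_{\max}(k)]}/\Ev[\ops_{\max}^{\text{d}}(k)]$ appearing when you divide the variance bound by $\Ev[\ops_{\max}^{\text{d}}(k)]^2$; your pointwise claim $\nu_j-\nu_{j+1}=\nu_j(1-o(1))$ must be upgraded to $\sum_j \nu_{j+1}/\nu_j=o(1)$, which the paper proves via the recursion for $\wit u_\ell$.)
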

\begin{proof}
Let us write $\ops_{\max}^{\text{non-d}}(k)$ for paths that are good but not good-directed. We show that they have a negligible contribution, while $\ops_{\max}^{\text{d}}(k)$ is well-concentrated. In this proof below, all expectations and probabilities are conditional wrt.\ $\MBN$. Let us write
\[\ba J:=\Pv&\left( \big|\ops_{\max}(k) -\Ev[\ops_{\max}(k) ]\big|\ge \frac12 \Ev[\ops_{\max}(k)] \right)\\
\le\Pv&\left( \big|\ops_{\max}^{\text{d}}(k) -\Ev[\ops_{\max}(k) ]\big|\ge \frac13 \Ev[\ops_{\max}(k)] \right)+
\Pv\left( \ops_{\max}^{\text{non-d}}(k) \ge \frac16 \Ev[\ops_{\max}(k)] \right). \ea\]
Now we can apply Chebyshev's inequality on the first term while Markov's inequality on the second term (both conditioned on $\MBN$), using  Lemma \ref{lem::wihZk}:
\be\label{eq::J}\ba J&\le \frac{9 \Vv[\ops_{\max}^{\text{d}}(k)| \MBN]}{\Ev[\ops_{\max}(k)|\MBN]^2}  + \frac{\Ev[\ops_{\max}(k)|\MBN]-\Ev[\ops_{\max}^{\text{d}}(k)|\MBN]}{6^{-1}\Ev[\ops_{\max}(k)|\MBN]}\\
&\le \left(1+ O(\tfrac{k^2}{n})\right)\left( \frac{1}{\MBN } \frac{\gamma_1}{\nu_1^2}\frac{\nu_{k-1}}{(\nu_{k-1}-1)} + \frac{2k^4 \nu_{k-1}}{\nu_{k-1}-1}\frac{ \gamma_1^2}{\nu_{1}^4 \CL_n} \Big(1+ O\big(\frac{1}{\MBN } \frac{\gamma_1}{\nu_1^2}\big)\Big) \right)\\
&\quad+ 6\left(1- \prod_{j=1}^k \left(1-\frac{\nu_{j+1}}{\nu_j}\right)\right) \ea\ee
The term containing $\gamma_1^2/\nu_{1}^4 \CL_n$ is coming from the Taylor expansion of the exponential factor in the formula for $e_{k,n}$.
We only have to verify that the rhs of the previous display is tending to $0$. For this we need $\gamma_1/(\nu_1^2 \MBN) \to 0$ and also $\gamma_1^2/(\nu_1^4 \CL_n) \to 0$.
For the first term, note that $\MBN\ge D_{\max}^{\sss{(b,n)}}(\infty)$, since it counts the number of half-edges with maximal degree $D_{\max}^{\sss{(b,n)}}(\infty)$. Further, $\gamma_1/\nu_1^2= \wit u_{\lfloor t_b+\la -T_r\rfloor}=o( D_{\max}^{\sss{(b,n)}}(\infty))$, since it is not hard to see that at time $t_b+\la$, the degree above which red occupies everything (i.e., $\wit u_{\lfloor t_b+\la -T_r\rfloor}$) is already less than $D_{\max}^{\sss{(b,n)}}(\infty)$,
 otherwise blue could have still increased its maximal degree at $t_b+\la$ by an extra jump.
  (Technically, this was the definition of $t_b$.
 Alternatively, compare the exact values of $D_{\max}^{\sss{(b,n)}}(\infty)$ in \eqref{eq::maxdegreeatinfty} and \eqref{def::dn}, and compare it to that
 of $\wit u_{\lfloor t_b+\la -T_r\rfloor}$, which can be derived from \eqref{eq::logul*} by adding the appropriate number of $(\tau-2)$ factors in the exponent corresponding to the five different cases. \
 This calculation is left to the reader.)

Similarly,  the second term, $\gamma_1^2/(\nu_1^4 \CL_n)=\wit u_{\lfloor t_b+\la -T_r\rfloor}^2/\CL_n$ is less than of order  $D_{\max}^{\sss{(b,n)}}(\infty)^2/n $ and hence is small as long as $D_{\max}^{\sss{(b,n)}}(\infty)=o(\sqrt{n})$. Note that this is the case by Theorem \ref{thm::maxdegreeblue} since $\la>1$.

Finally, we show that the last term in \eqref{eq::J} is also small. Since $\la>1$, $[t_b+\la (j+1)-T_r] \ge [t_b+\la j-T_r] +1$, and $\nu_i\in (c_1, C_1)\times \wit u_{[t_b+ \la i -T_r]}^{3-\tau}$ hence the last term is less than 6 times
\[ \sum_{j=1}^k \frac{\nu_{j+1}}{\nu_{j}} \le \sum_{\ell=[t_b+\la -T_r]}^{[t_b+\la k -T_r] }\frac{C_1}{c_1} \left(\frac{\wit u_{\ell+1}}{\wit u_\ell}\right)^{3-\tau} \le \sum_{\ell=[t_b+\la -T_r]}^{[t_b+\la k -T_r]} \frac{C_1(C\log n)^{3-\tau}}{c_1\wit u_\ell},\]
where we have used the recursion $\wit u_{\ell+1}=C\log n\,\wit u_{\ell}^{2-\tau}$ in \eqref{eq::wideui_recursion}.
Again, by the same recursion, for some large enough constant $C'$, the sum on the rhs is at most
\[ \frac{C_1(C\log n)^{3-\tau}}{c_1}\frac{C'}{\wit u_{[t_b+\la k -T_r]}},\]
which is small as long as $\log \wit u_{[t_b+\la k -T_r]}$ is of larger order than $\log ((C\log n)^{3-\tau})$. Note that this holds for an appropriate choice of $k$, since using \eqref{eq::logul*} and the recursion for $\wit u_\ell$ again, $\log \wit u_{[t_b+\la k -T_r]}$ is of order
\[\log \wit u_{[t_b+\la k -T_r]}= O((\log n)^{2/(1+\la)} (\tau-2)^k). \]
Note that if we now pick $k=o(\log \log n)$, then the exponent $(\log n)^{2/(1+\la)}$ stays unchanged and the expression is much larger order than $\log\log n$.
\end{proof}

Recall that $\CA_i, A_i$ stands for the set and number of blue vertices in layer $\Gamma_{i}^{\sss{(b)}}$ at the time when blue reaches the layer -- at time $\la [t(n_\rho')/\la]+\la i$.  Also recall that $i_{\max}$ stands for the index of the last $\Gamma_{i}^{\sss{(b)}}$ layer ever reached by blue, see \eqref{def::imax}. Further, $\ops_{-z}(k)$ is the number of vertices explored via a path of length $k$ starting from a half-edge in $\CA_{i_{\max}-z}$ that are not explored via a half-edge from $\CA_{i_{\max}-z+1}$.
Next we show that $\sum_{z\ge 0}\ops_{-z}(k)$ is at most the same order of magnitude as  $\ops_{\max}(k)$:
\begin{lemma}\label{lem::opt-z-lemma} With the notation introduced before,
\[ \log \Big(\sum_{z\ge 0}\ops_{-z}(k) \Big)\le \log (\ops_{\max}(k)) (1+o_{\Pv}(1)). \]
\end{lemma}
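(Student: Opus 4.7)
The plan is to extend the first-moment path-counting machinery of Lemma \ref{lem::wihZk} to each of the explorations $\ops_{-z}(k)$, and then to show that the total contribution is dominated by $\ops_{\max}(k)$ on the logarithmic scale. Let $\MBN_{-z}$ denote the number of half-edges attached to $\CA_{i_{\max}-z}\setminus\CA_{i_{\max}-z+1}$. Since every such vertex has degree at most $u_{i_{\max}-z+1}^{\sss{(b)}}$, Lemma \ref{lem::numberofverticesinGamma} gives the deterministic upper bound $\MBN_{-z}\le A_{i_{\max}-z}\cdot u_{i_{\max}-z+1}^{\sss{(b)}}$. The very same path-counting proof as in Lemma \ref{lem::wihZk} then yields
\[
\Ev[\ops_{-z}(k)\mid \MBN_{-z}]\le \MBN_{-z}\prod_{j=1}^{k}\tilde\nu_{j,z}\cdot (1+O(k^2/n)),
\]
where $\tilde\nu_{j,z}$ is the truncated forward-degree expectation as in \eqref{def::nuj} but with the tighter cut-off $\min\{u_{i_{\max}-z+j}^{\sss{(b)}},\widetilde u_{\lfloor t_b-\la z+\la j-T_r\rfloor}\}$; a Markov inequality converts this into a whp upper bound on $\ops_{-z}(k)$.

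Next I would compare these moment bounds with the one coming from Corollary \ref{cor::chebisev}. The doubly-exponential recursion \eqref{eq::uibar} for $u_i^{\sss{(b)}}$ gives $\log u_{i_{\max}-z+1}^{\sss{(b)}}=(\tau-2)^{z-1}\log u_{i_{\max}}^{\sss{(b)}}+O(\log\log n)$, while Lemma \ref{lem::numberofverticesinGamma} bounds $\log A_{i_{\max}-z}$ by a quantity of order $(\log n)^{(\la-1)/(\la(\la+1))}\log\log n$. Starting $z$ time units earlier permits $z$ extra path steps compared to the $\ops_{\max}$ exploration, but the additional blue cap $u_{i_{\max}-z+j}^{\sss{(b)}}$ in the definition of $\tilde\nu_{j,z}$ can only make the latter \emph{smaller} than the corresponding $\nu_j$ entering the $\ops_{\max}$ estimate, so the product $\prod_{j=1}^{k+z}\tilde\nu_{j,z}$ stays comparable to $\prod_{j=1}^{k}\nu_j$ up to a polylog correction. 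Putting these comparisons together should yield, uniformly in $0\le z\le i_{\max}$ and whp,
\[
\log\ops_{-z}(k)\le \log\ops_{\max}(k)+O\!\big((\log n)^{(\la-1)/(\la(\la+1))}\log\log n\big).
\]

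Finally, since $i_{\max}=O(\log\log n)$ by \eqref{def::imax}, the trivial bound $\sum_{z\ge 0}\ops_{-z}(k)\le (i_{\max}+1)\max_{z}\ops_{-z}(k)$ together with the display above, combined with the fact that $\log\ops_{\max}(k)\asymp (\log n)^{2/(\la+1)}$ from Corollary \ref{cor::chebisev}, proves the claim, because $2/(\la+1)>(\la-1)/(\la(\la+1))$ for every $\la>1$ (their difference equals $1/\la$), so every additive correction above is negligible after taking logarithms and dividing by $\log\ops_{\max}(k)$. The principal obstacle will be the careful bookkeeping inside $\tilde\nu_{j,z}$: one has to identify the crossover index $j^{\ast}(z)$ at which the active truncation switches from the blue cap $u_{i_{\max}-z+j}^{\sss{(b)}}$ to the red cap $\widetilde u_{\lfloor t_b-\la z+\la j-T_r\rfloor}$, and to verify that in both regimes the quantities $\log\tilde\nu_{j,z}$ add up to at most $\sum_{j}\log\nu_j$ plus a polylog correction. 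Once this is settled, the rest of the proof reduces to a direct repeat of the first-moment and Chebyshev-type estimates already carried out for Lemma \ref{lem::wihZk} and Corollary \ref{cor::chebisev}.
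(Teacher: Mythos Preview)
Your overall plan coincides with the paper's: a first-moment path-counting bound for each $\ops_{-z}(k)$, then a comparison against $\ops_{\max}(k)$, then the trivial $\sum_z\le (i_{\max}+1)\max_z$ bound. The gap is in the middle step. You assert that ``the product $\prod_{j=1}^{k+z}\tilde\nu_{j,z}$ stays comparable to $\prod_{j=1}^{k}\nu_j$ up to a polylog correction,'' but this is false: the first $z$ factors contribute
\[
\sum_{j=1}^{z}\log\tilde\nu_{j,z}\;\asymp\;(3-\tau)\sum_{j=1}^{z}\log u^{\sss{(b)}}_{i_{\max}-z+j}\;=\;\bigl(1-(\tau-2)^{z}\bigr)\log u^{\sss{(b)}}_{i_{\max}}(1+o(1)),
\]
which is of the \emph{leading} order $(\log n)^{2/(\la+1)}$, not polylogarithmic. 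So ``putting the comparisons together'' as you describe would leave an unmatched term of size $\Theta\!\big((\log n)^{2/(\la+1)}\big)$, and your claimed error $O\!\big((\log n)^{(\la-1)/(\la(\la+1))}\log\log n\big)$ does not follow.

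What makes the argument work in the paper is an exact cancellation you stop short of writing down: since $\log u^{\sss{(b)}}_{i_{\max}-z+j}=(\tau-2)^{z-j}\log u^{\sss{(b)}}_{i_{\max}}(1+o(1))$, the starting half-edge count and the first $z$ truncated means telescope,
\[
\log\bigl(A_{i_{\max}-z}\widehat u^{\sss{(b)}}_{i_{\max}-z}\bigr)+\sum_{j=1}^{z}(3-\tau)\log \widehat u^{\sss{(b)}}_{i_{\max}-z+j}
=\Bigl[(\tau-2)^{z}+(3-\tau)\!\sum_{\ell=0}^{z-1}(\tau-2)^{\ell}\Bigr]\log\widehat u^{\sss{(b)}}_{i_{\max}}(1+o(1))
=\log\widehat u^{\sss{(b)}}_{i_{\max}}(1+o(1)),
\]
after which one is left with $\log\widehat u^{\sss{(b)}}_{i_{\max}}+\sum_{j=1}^{k-z-1}\log\nu_j$, visibly maximal at $z=0$ and bounded by $\log\MBN+\sum_{j=1}^{k-1}\log\nu_j$. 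This geometric identity is precisely the ``careful bookkeeping'' you flag as the principal obstacle; it is not a polylog perturbation but the mechanism that removes a leading-order term. (Two smaller slips: the correction in $\log u^{\sss{(b)}}_{i_{\max}-z+1}=(\tau-2)^{z-1}\log u^{\sss{(b)}}_{i_{\max}}+\cdots$ is $O\!\big((\log n)^{(\la-1)/(\la(\la+1))}\log\log n\big)$ rather than $O(\log\log n)$, and membership in $\CA_{i_{\max}-z}\setminus\CA_{i_{\max}-z+1}$ does not by itself cap the degree at $u^{\sss{(b)}}_{i_{\max}-z+1}$ without invoking the bad-path lemma; the paper simply uses $\widehat u^{\sss{(b)}}_{i_{\max}-z}$ as the degree cap.)
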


\begin{proof}
Let us denote the number of half-edges in $\CA_{i_{\max}-z}$ that are \emph{not connected} directly to $\Gamma_{i_{\max}-z+1}^{\sss{(b)}}$ by $H_{-z}$. From Lemma \ref{lem::numberofverticesinGamma} we have a bound on the  number of vertices $A_i$ in layer $\Gamma_i^{\sss{(b)}}$, and Lemma \ref{lem::badpaths} says that the maximal degree in $\CA_{i}$ is at most $\wih u_i^{\sss{(b)}}$ whp.

First, let us describe the following construction of the blue cluster spreading through the layers $\Gamma_i^{\sss{(b)}}.$ After an extra time unit $\la$,  $A_{i+1}$ half-edges out of the at most $A_i \wih u_i^{(b)}$  half-edges of blue are connected to half-edges in $\Gamma_{i+1}^{\sss{(b)}}$, while the other half-edges are not.
In the construction of $\CMD$ in Section \ref{sc::BP}, each half-edge is paired to a uniformly chosen other half-edge. The uniform distribution restricted to a set is still uniform on that set,
thus we can think of this procedure by picking  $A_{i+1}$ many of the half-edges out of the at most
 $A_i \wih u_i$ half-edges uniformly at random and connecting them to uniformly chosen half-edges in $\Gamma_{i+1}^{\sss{(b)}}$.

 The rest of the half-edges in $\Gamma_{i}^{\sss{(b)}}$ are connected to lower degree vertices, i.e., we can simply pair these half-edges to lower degree vertices than $u^{\sss{(b)}}_{i+1}$, and apply the path counting method similar as for $\CMBN$ in Lemma \ref{lem::wihZk},
 with the restriction that the degree of the $j$-th vertex on such a path must be less than the degree in $\Gamma_{i_{\max}-z+j}^{\sss{(b)}}$ if $j\le z$ and less than the degree where the red avalanche is at the current time when $j>z$, respectively. The restriction for $j\le z$ is needed to avoid double counting.

Clearly, $H_{-z}\le A_{i_{\max}-z} \wih u_{i_{\max-z}}$.
Then the degree truncation for this process at $\la j$ time unit later is at $u_{i_{\max}-z+j}^{\sss{(b)}}$ if $j\le z$ and $\wit u_{ t_b+\la (j-z) - T_r   }$ if $j>z$.

A simple modification of Lemma \ref{lem::wihZk} gives the  number of vertices found from these half-edges. Moreover, to show that vertices reached from $\CA_{i_{\max}-z}$, for $z\ge1$ are of less order than that reached via $\CMBN$, we can use Markov's inequality:

Similarly as in \eqref{eq::prodnuj1},
\be\label{eq::log0} \log \ops_{-z}(k)\le \log \left(A_{i_{\max}-z}\wih u_{i_{\max}-z}\right) + \sum_{j=1}^{z}\log ((\wih u_{i_{\max-z+j}})^{3-\tau})+\sum_{j=1}^{k-z-1} \log \nu_{j}, \ee
where the exponent $3-\tau$ comes from a similar calculation than that in \eqref{eq::nujbound}.
We claim that the maximum of this quantity is at $z=0$.

Since $\log \ops_{\max}(k)= \log \MBN +\sum_{j=1}^{k-1} \log \nu_{j}$, to prove that $\ops_{-z}(k)\le \ops_{\max}(k-z)$, we need to show that the sum of the first two terms in \eqref{eq::log0} are less than $\log \MBN$.

By the recursive definition of $\wih u_i$ in \eqref{eq::uibar}, $\log \wih u_{i_{\max-z}} = (\tau-2)^z\log\wih u_{i_{\max}}(1+o(1))$.
We can also use the fact from Lemma \ref{lem::numberofverticesinGamma} that $A_{i}=o(\wih u_i)$ for any $i\le i_{\max}$. Hence
\be\label{eq::log1}\log \left(A_{i_{\max}-z}\wih u_{i_{\max}-z}\right)= (\tau-2)^z\log (\wih u_{i_{\max}})(1+o(1)) \text{ whp } \ee
and the second term in \eqref{eq::log0} is $(1+o(1))$ times
\be\label{eq::log2}  \sum_{j=-z+1}^0 (3-\tau)\log(\wih u_{i_{\max}-z+j}) =  \log(\wih u_{i_{\max}}) (3-\tau)\sum_{j=0}^{z-1}(\tau-2)^j=\log(\wih u_{i_{\max}}) \left(1-(\tau-2)^z\right).  \ee
We see that the sum of the right hand sides of \eqref{eq::log1} and \eqref{eq::log2}
 is exactly $\log(\wih u_{i_{\max}})$.
 Thus, returning to \eqref{eq::log0},
 \[\log \ops_{-z}(k) \le \log \wih u_{i_{\max}} + \sum_{j=1}^{k-z-1} \log \nu_j  + o(1).\]
 The right hand side is indeed maximal for $z=0$, for which we have
\[ \log \ops_{-0}(k) \le \log \wih u_{i_{\max}} + \sum_{j=1}^{k-1} \log \nu_j  + o(1).\]
Compare this quantity to $\log \ops_{\max}(k)$ in Corollary \ref{cor::chebisev}.
Since $\wih u_{i_{\max}}(1+o(1))\le D_{\max}^{\sss{(b,n)}}(\infty)\le \MBN$, this finishes the proof of Lemma \ref{lem::opt-z-lemma}, since \[\sum_{z=0}^{O(\log\log n) } \ops_{-z}(k)\le O(\log\log n) \max_z \ops_{-z}(k),\] and the $\log\log n$ factor becomes a negligible additive term when taking logarithm.
\end{proof}

Having analysed the size of the optional cluster of blue,  we are ready to finish the upper bound of Theorem \ref{thm::main} by combining the previous results.
\begin{proof}[Proof of the upper bound in Theorem \ref{thm::main}]
First, fix $k=k(n)\to \infty$ so that $k(n)=o(\log\log n)$.
Then, Lemma \ref{lem::opt-z-lemma} implies that the logarithm of the  total number of vertices that blue paints in the last phase is at most $\log \ops_{\max}(k) (1+o_{\Pv}(1))$. Corollary \ref{cor::chebisev} says that the order of magnitude of $\log (\ops_{\max}(k))=\log \MBN + \sum_{j=1}^{k-1}\log  \nu_j + o_{\Pv}(1)$, where $\MBN$ is the number of blue half-edges in the highest layer that blue can reach.
Further, Lemma \ref{lem::verticeswithmaxdegree} determines the order of magnitude of $\log \MBN$, which is
\be\label{eq::order-mbn}\log \MBN=\left(\Ybnla/\Yrn\right)^{1/(\la+1)} (\log n)^{\frac{2}{\la+1}} C_n^{\text{half-edge}}(\Yrn, \Ybn) (1+o_{\Pv}(1))\ee and hence converges in distribution to $(Y_b^\la/Y_r)^{1/(\la+1)}$ when divided by the second two factors.

Thus, to get the asymptotic of $\log (\ops_{\max}(k))$, it remains to calculate  $\sum_{j=1}^k \log \nu_j$ and compare it to the order of $\log \MBN$. For this recall  the definitions of $\nu_j$ in \eqref{def::nuj}, $t_b$ in \eqref{def::t_b}, $t_c$ in \eqref{eq::tc}, $\wit u_\ell$ in \eqref{eq::ul}, and the upper bound on $\nu_j$ in \eqref{eq::nujbound}.
With $\de_j:=\{t_b-T_r+\la j\}$,
 \begin{align} \label{eq::prodnuj1}\sum_{j=1}^{k} \log \nu_j \le& \sum_{j=1}^k \log\left( C_1 \left(\wit u_{\lfloor t_b+\la j-T_r\rfloor}\right)^{3-\tau}\right)\\
  \le& \sum_{j=1}^{k} \left\{\left(\alpha\log n \!+\! \beta \log (C\log n)\right)(\tau-2)^{-1+t_b-T_r+\la j-\de_j}  (3\!-\!\tau)\right\} \!+\!k \log  (C_1 C\log n).\nonumber \end{align}
  We rewrite $t_b-T_r=t_c + \la \left(\ind_{\CE} - \left\{\frac{T_r+t_c}{\la}\right\}\right)$ in the exponent using \eqref{def::t_b}, and then 
use formula \eqref{eq::tc} to see that $t_c=O^\diamond(\frac{\la-1}{\la (\la+1)})$. Hence the main order term in $(\tau-2)^{t_c}$ is $(\log n)^{(\la-1)/(\la(\la+1))}$. This implies that the two smaller order terms $k \log  (C_1 C\log n)$ and $\beta \log (C\log n)(\tau-2)^{t_c}$ are  $o( \log n (\tau-2)^{t_c})$ and can be put in a $(1+o_\Pv(1))$ factor of the main term.
Using the exact value of $t_c$ in \eqref{eq::tc} we obtain then
\be\ba\label{eq::orderoflognuj} \sum_{j=1}^{k} \log \nu_j\le&\ (1+o_{\Pv}(1))\cdot  (\log n)^{\frac{2}{\la+1}} \!\! \left(\frac{\Ybnla}{\Yrn}\right)^{1/(\la+1)}\\
&\ \times\left(\frac{\alpha}{\tau-1}\right)^{\frac{1}{\la+1}}\!\!\!\cdot\!\!\!\! \left(\sum_{j=1}^k (\tau-2)^{\la j-\de_j}(3-\tau)\right)
(\tau-2)^{\frac{-\la+b_n^{(r)}}{\la+1}+ \la \left(\ind_{\CE} - \left\{\frac{T_r+t_c}{\la}\right\}\right) }.
\ea\ee
Note that we have used the formula in \eqref{eq::ul} for $\wit u_\ell$, but this is valid only until $\ell< O^\diamond(1-\ve)$ steps, so
$k=k(n)$ must satisfy $t_b-T_r+\la k \le O^\diamond(1-\ve)$. Again, by \eqref{def::t_b}, $t_b-T_r=O^\diamond(\frac{\la-1}{\la (\la+1)})$, hence any $k:=k(n)\to \infty$ with $k(n)=o(\log\log n)$ is still a good choice.

Similarly as in \eqref{eq::order-mbn}, we get
\be\label{eq::o-nuj}
\sum_{j=1}^{k} \log \nu_j \le \left(\Ybnla/\Yrn\right)^{1/(\la+1)} (\log n)^{\frac{2}{\la+1}} C^{\text{paths,k}}_n( \Yrn, \Ybn),\ee
where we introduce
 \be \ba \label{def::dnY} C^{\text{paths,k}}_n( \Yrn, \Ybn):=&\left(\frac{\tau-1-(\tau-2)^{b_n^{(r)}}}{(\tau-1)^2}\right)^{\frac{1}{\la+1}}\!\!\!\! (\tau-2)^{\frac{-\la+b_n^{(r)}}{\la+1}+ \la \left(\ind_{\CE} - \left\{\frac{T_r+t_c}{\la}\right\}\right) }\\
&\quad\times (3-\tau)\sum_{j=1}^{k} (\tau-2)^{\la j-\de_j}, \ea\ee
and where we used that $\alpha=1-(\tau-2)^{b_n^{\sss(r)}}/(\tau-1)$, see \eqref{def::alpha}, where $b_n$ is replaced by $b_n^{\sss(r)}$.
Let \be \label{def::Cpath}C^{\text{paths}}_n( \Yrn, \Ybn):=\lim_{k\to \infty} C^{\text{paths,k}}_n( \Yrn, \Ybn).\ee Note that in this formula, $\delta_j=\{t_b-T_r+\la j\}\in[0,1)$, where both $t_b,T_r$ are integers. This implies that if $\la = p$ for some $p\ge 2$ integer, then all $\de_j\equiv 0$ and the last sum can be carried out and tends to $1/(1-(\tau-2)^\la).$ Otherwise, the sum is less than $1/(1-(\tau-2)^\la).$ 

  Now, recall again that $\log \ops_{\max}(k)=\log \MBN+ \sum_{j=1}^k \nu_j+ o_{\Pv}(1)$ by Corollary \ref{cor::chebisev}, and combine \eqref{eq::o-nuj} with \eqref{eq::order-mbn}
  \[  \frac{\log \ops_{\max}(k) }{
(\log n)^{\frac{2}{\la+1}} \left(  C_n^{\text{half-edge}}(\Yrn, \Ybn) + C^{\text{paths}}_n( \Yrn, \Ybn)\right)} \le \left( \frac{\Ybnla}{\Yrn}\right)^{\frac{1}{\la+1}}+ o_{\Pv}(1). \]
Now we can finally use $\Yrn\toindis Y_r, \ \Ybn\toindis Y_b$ (by Theorem \ref{thm::davies}
). Hence, the right hand side converges to $(Y_b^\la/Y_r)^{1/(\la+1)}$.

Let us denote
\be\label{def::norm-constant}C_n(\Yrn,\Ybn):= C_n^{\text{half-edge}}(\Yrn, \Ybn) + C^{\text{paths}}_n( \Yrn, \Ybn). \ee Some elementary calculations -- optimizing the fractional parts appearing in the exponents -- yield the following bounds:
\be\label{eq::tight-bounds}  \left(\!\frac{(\tau-2)(3-\tau)}{(\tau-1)^{2}\left(1-(\tau-2)^\la\!\right)}\right)^{\frac{1}{\la+1}}\!\!\! \frac{\la+1}{\la^{\frac{\la}{\la+1}}}\! \le C_n(\Yrn,\Ybn) \le
\frac{(\tau-2)^{-\frac{2\la+1}{\la+1}}}{4^{\frac{1}{\la+1}}}\left(\!1+\frac{3-\tau}{1-(\tau-2)^{\la}}\!\right)\!.  \ee
The bounds given in Theorem \ref{thm::main} are the simplified versions of \eqref{eq::tight-bounds}.
\end{proof}
For the lower bound of Theorem \ref{thm::main} we need to show that most of the optional cluster of blue is actually going to be occupied by blue. For this, let us introduce the notation $\opt(k):=\bigcup_{z\ge0} \opt_{-z}(k) \cup \opt_{\max}(k)$, and set $\opss(k):=|\opt(k)|$,
where $k$ stands for the length of the paths we are counting.
The next lemma shows that essentially all the vertices in $\opt(k)$ for some $k=k_n=o(\log\log n)$ will indeed be painted blue, i.e., red cannot accidentally bite out too much from this set.

\begin{lemma}\label{lem::red-blue-intersection}
Set $k=k(n)=o(\log\log n)$.
The number of vertices in the intersection of $\CR_{t_b+\la k}$ and $\opt(k)$ is small, i.e.,
\[ |\opt(k)\cap  \CR_{t_b+\la k}|= o_{\Pv}\left(\ops_{\max}(k)\right),\]
hence \[ |\opt(k)\setminus(\opt(k)\cap  \CR_{t_b+\la k})|=\ops_{\max}(k)(1+o_{\Pv}(1)).\]
\end{lemma}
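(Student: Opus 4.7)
The plan is a first moment argument: we estimate $\Ev[|\opt(k) \cap \CR_{t_b + \lambda k}|]$ and apply Markov's inequality. The second displayed statement then follows from the first by subtraction, using the concentration of $\ops_{\max}(k)$ provided by Corollary \ref{cor::chebisev}. For each vertex $v$, we bound the joint probability that $v\in\opt(k)$ is witnessed by a good blue path of some length $j\le k$, and that red paints $v$ strictly before blue does (which happens at time $t_b+\lambda j$ along this path).

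A vertex $v\in\opt(k)$ at blue-depth $j$ satisfies $d_v\le \widetilde u_{\lfloor t_b+\lambda j - T_r\rfloor}$ by the degree restriction, so $v\notin \widetilde\Gamma_{t_b+\lambda j - T_r}$ and the deterministic avalanche has not yet reached $v$ when blue arrives. For $v$ to end up red, either (a) the avalanche catches $v$ by time $t_b+\lambda k$, forcing $d_v\in(\widetilde u_{t_b+\lambda k-T_r},\,\widetilde u_{\lfloor t_b+\lambda j-T_r\rfloor}]$; or (b) a non-avalanche red path from the existing red cluster reaches $v$ before blue does. For (a), we modify the path-counting bound \eqref{eq::wihZk_expected} by replacing the last factor $\nu_j$ with $\Ev[B\,\ind_{\{\widetilde u_{t_b+\lambda k-T_r}<B\le \widetilde u_{\lfloor t_b+\lambda j-T_r\rfloor}\}}]$, which by \eqref{eq::F} and the recursion \eqref{eq::wideui_recursion} is smaller than $\nu_j$ by a factor that is geometrically small in $k-j$. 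Summing over $j\le k$ yields a total contribution that is $o(\ops_{\max}(k))$ once $k=k(n)\to\infty$.

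For (b), we apply a joint path-counting argument symmetric to Lemma \ref{lem::wihZk}: the expected number of pairs $(\pi_{\sss B},\pi_{\sss R})$ with $\pi_{\sss B}$ a good blue path to $v$ and $\pi_{\sss R}$ a red path from the boundary half-edges of some avalanche layer $\widetilde\Gamma_{t_b+\lambda j_0-T_r}$ reaching $v$ in the allowed time factorizes, up to a $(1+o(1))$ correction for shared half-edges (as in Appendix \ref{app::paths}, valid since both clusters involve $o(\sqrt n)$ half-edges by Theorem \ref{thm::maxdegreeblue}), into the product of the blue and red path counts. Crucially, $\lambda>1$ limits the allowed red path length: for red starting at $\widetilde\Gamma_{t_b+\lambda j_0-T_r}$ at time $t_b+\lambda j_0$ to beat blue to $v$, the red path length must satisfy $d\le \lambda(j-j_0)$, and the path must terminate at a vertex in the low-degree window $d_v\le\widetilde u_{\lfloor t_b+\lambda j - T_r\rfloor}$. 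This forces enough path-descent through low-degree vertices to make the red count $o(1)$ per vertex after summing over $j_0$.

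The main obstacle is item (b): carefully bookkeeping the dependencies between red and blue paths through the shared configuration-model pairing, and summing over all possible starting layers $j_0$ and red path lengths $d$. The key technical tool is the joint path-counting framework from Appendix \ref{app::paths}, applied simultaneously to red and blue; this shows that shared half-edges contribute only lower-order corrections because the total half-edges involved in either cluster is $o(n)$. Once contributions from (a) and (b) are both shown to be $o(\ops_{\max}(k))$, Markov's inequality and Corollary \ref{cor::chebisev} yield both statements of the lemma.
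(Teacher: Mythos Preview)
Your approach is more intricate than the paper's and contains a genuine gap. The paper's proof is a simple \emph{thinning argument}: construct the configuration model sequentially, pairing red half-edges first at each time step. Then a blue half-edge paired at time $t_b+\lambda i$ lands in red with probability $H(\CR_{t_b+\lambda i})/\CL_n$, so by a union bound over the $k$ steps, the probability that a given good path is ``thinned'' is at most $p_{\text{th},k}:=\sum_{i=1}^k H(\CR_{t_b+\lambda i})/\CL_n$. Markov's inequality on the proportion of thinned paths then finishes the argument, provided $p_{\text{th},k}\to 0$. The only substantive input is the bound $|\CR_{t_b+\lambda k}|=o(n)$, which follows because $t_b+\lambda k\le (1+\varepsilon)\tfrac{2\lambda}{\lambda+1}\tfrac{\log\log n}{|\log(\tau-2)|}$ is strictly below the typical distance $\tfrac{2\log\log n}{|\log(\tau-2)|}$; a short claim then shows that any $o(n)$-vertex set carries $o(n)$ half-edges. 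No red path counting is ever needed.

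Your case (b), by contrast, relies on a joint red/blue path count with the justification ``both clusters involve $o(\sqrt n)$ half-edges by Theorem~\ref{thm::maxdegreeblue}''. This is incorrect: Theorem~\ref{thm::maxdegreeblue} bounds blue's maximal degree, not the red cluster. The red avalanche layer $\widetilde\Gamma_{\lfloor t_b+\lambda j_0-T_r\rfloor}$ from which you start red paths carries, by Claim~\ref{claim::Sbound}, on the order of $n\cdot\widetilde u_{\lfloor t_b+\lambda j_0-T_r\rfloor}^{2-\tau}=n^{1-o(1)}$ half-edges, far more than $\sqrt n$. So the factorization-up-to-$(1+o(1))$ framework of Appendix~\ref{app::paths} does not apply as stated, and you never actually carry out the red path count you announce (``This forces enough path-descent \dots'' is an assertion, not an estimate). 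A smaller issue: in case (a) you claim the truncated expectation is ``smaller than $\nu_j$ by a factor geometrically small in $k-j$'', but in fact $\Ev[B\,\ind_{\{\widetilde u_{t_b+\lambda k-T_r}<B\le\widetilde u_{\lfloor t_b+\lambda j-T_r\rfloor}\}}]\asymp\nu_j(1-o(1))$; the contribution at depth $j<k$ is negligible only because shorter paths already miss the factors $\nu_{j+1}\cdots\nu_k$.
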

The proof of the lemma will follow from the following claim:

\begin{claim} Suppose $S\subset \CMD$ is an arbitrary subset of vertices such that
 $|S|=o(n)$ but $|S|:= s_n \to \infty$. Then with high probability, the total number half-edges that point out of $S$, $H(S)$ is at most
 $|S|^{(\tau-2)/(\tau-1)} n^{1/(\tau-1)}$. Hence, $H(S)=o(n)$ holds also.
\end{claim}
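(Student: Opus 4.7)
The plan is to reduce the bound on $H(S)=\sum_{v\in S}D_v$ to the sum of the top-$s_n$ order statistics of the i.i.d.\ degree sequence, and then split the sum at a carefully chosen threshold. Since $H(S)$ depends only on the multiset $\{D_v:v\in S\}$, for any fixed degree sequence $H(S)$ is maximised precisely when $S$ consists of the $s_n$ vertices of largest degree, so it suffices to establish the bound in that worst case.

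First I would choose the threshold
\[ y_n:=(n/s_n)^{1/(\tau-1)}, \qquad \Gamma_{y_n}:=\{v\in[n]:D_v\ge y_n\}. \]
By \eqref{eq::F}, $\Ev[|\Gamma_{y_n}|]=n\cdot\Pv(D\ge y_n)\asymp s_n$, and since $s_n\to\infty$, Lemma~\ref{lem::chernoff} yields $|\Gamma_{y_n}|\le 2C_1s_n$ whp. In particular, when $S$ is the top-$s_n$ set, every vertex in $S\setminus\Gamma_{y_n}$ has degree strictly less than $y_n$.

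Next I would split
\[ H(S)=\sum_{v\in S\cap\Gamma_{y_n}} D_v+\sum_{v\in S\setminus\Gamma_{y_n}} D_v. \]
The first sum is at most $\CE_{\ge y_n}$, which by Claim~\ref{claim::Sbound} (applicable because $ny_n^{1-\tau}=s_n\to\infty$) is bounded by $Cny_n^{2-\tau}=C\,n^{1/(\tau-1)}s_n^{(\tau-2)/(\tau-1)}$ whp. The second sum is bounded deterministically by $|S|\cdot y_n=s_n\cdot(n/s_n)^{1/(\tau-1)}=n^{1/(\tau-1)}s_n^{(\tau-2)/(\tau-1)}$. Adding these two estimates gives $H(S)\le(C+1)\,n^{1/(\tau-1)}s_n^{(\tau-2)/(\tau-1)}$ whp, which is the displayed bound (with an implicit constant that can be absorbed). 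The second statement $H(S)=o(n)$ follows immediately since $n^{1/(\tau-1)}s_n^{(\tau-2)/(\tau-1)}=n\cdot(s_n/n)^{(\tau-2)/(\tau-1)}$, and $s_n/n\to 0$.

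There is essentially no main obstacle here: the proof is a direct combination of the already-proved tail estimate Claim~\ref{claim::Sbound} with a Chernoff estimate on $|\Gamma_{y_n}|$. The only mildly delicate point is the choice of threshold, which must be calibrated so that the two contributions balance exactly; the requirement $ny_n^{1-\tau}\asymp s_n$ forces the unique choice $y_n=(n/s_n)^{1/(\tau-1)}$, after which both terms come out to the same order $n^{1/(\tau-1)}s_n^{(\tau-2)/(\tau-1)}$.
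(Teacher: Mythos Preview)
Your proof is correct and follows essentially the same route as the paper: reduce to the top-$s_n$ order statistics, split at a threshold $y_n$ (the paper calls it $K_n$), bound the high part via Claim~\ref{claim::Sbound} and the low part by $|S|\cdot y_n$, and balance to get $y_n=(n/s_n)^{1/(\tau-1)}$. One minor remark: your Chernoff bound on $|\Gamma_{y_n}|$ is superfluous, since the fact that vertices in $S\setminus\Gamma_{y_n}$ have degree below $y_n$ is immediate from the definition of $\Gamma_{y_n}$ and does not require any estimate on its size.
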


\begin{proof}
In the worst case scenario, $S$ contains the $|S|$ highest degree-vertices. Order the degrees $D_1, D_2, \dots, D_n$ in $\CMD$ of vertices in non-increasing order:  $D^{(1)}\le  D^{(2)}\le \dots D^{(n)}$. Then with an arbitrarily chosen $K_n>0$ the following bound always hold:
\be\label{eq::half-edge-bound} \sum_{v \in S} d_v \le \sum_{j=1}^{|S|} D^{(j)} \ind_{\{D^{(j)}< K_n\}} + \sum_{i=1}^n D_i \ind_{\{D_i>K_n\}}.\ee
By Lemma \ref{claim::Sbound}, the second term is whp at most $n K_n^{2-\tau}$, with the error probability $\exp\{-c n K_n^{1-\tau}\}$ being small as long as $K_n=o(n^{1/(\tau-1)})$.
Hence the rhs of \eqref{eq::half-edge-bound} is whp at most
 \[ \sum_{v \in S} d_v \le |S| K_n + n K_n^{2-\tau}. \]
For the right hand side to be minimal we set $K_n:=((\tau-2)n/|S|)^{1/(\tau-1)}$, which is $o(n^{1/(\tau-1)})$ as long as $|S|\to \infty$ with $n$. With this choice of $K_n$,
 \[ \sum_{v \in S} d_v \le |S|^{\frac{\tau-2}{\tau-1}} n^{\frac{1}{\tau-1}}. \]
Since the exponents sum up to $1$, the rhs is always $o(n)$ if $|S|=o(n)$.
\end{proof}

\begin{proof}[Proof of Lemma \ref{lem::red-blue-intersection}]
Note that we can construct the configuration model by pairing the half-edges in an arbitrarily chosen order. This enables the joint construction of the graph and the spread of the red and blue cluster. Hence, we can assume that if a vertex is not yet colored, its half-edges are still free, and we do not have to take into account the effect that whole paths can be blocked away from one color by the other color by painting one or a few vertices only.

Fix the length of the blue exploration path $k$. For a set $S$ of vertices, we denote by $H(S)$ the total number of half-edges that point out of the set $S$. As a lower bound, we can use the adapted rule that whenever red and blue arrives at a vertex at the same time, it is going to be red deterministically. We can further assume that if this is the case, i.e., there are simultaneous jumps of red and blue, then we always pair the red half-edges first, i.e., when pairing the blue half-edges at time $t_b+\la i$, we consider $\CR_{t_b+\la i}$ as already determined.

Let us consider a path $\pi$ ending in $\CO(k)$ given by the sequence of half-edges and vertices $(\pi_0,  s_0,  t_1,  \pi_1,$
$s_1, \dots, t_k, \pi_k)$, that is, $s_i$ is the half-edge pointing out of vertex $\pi_i$ that we pair to $t_{i+1}$, a half-edge belonging to vertex $\pi_{i+1}$. We call this path \emph{thinned at step $i$} if the half-edge $s_{i-1}$ is paired to the half-edge $t_i$ where $\pi_i$ is already red, i.e. $\pi_i \in \CR_{t_b+\la i}$. We call a path \emph{thinned} if it is thinned at some $i\le k$.

Clearly, each time we pair a blue half-edge at time $t_b+\la i$, it is with probability $H(\CR_{t_b+\lambda i})/\CL_n(1+o(1))$ paired to a red half-edge. Let us denote $\sigma_k:=\{\CR_{t_b+\lambda i}\}_{i=1}^k$. Hence, the probability that a particular path ending in $\CO(k)$ to be thinned can be bounded using a union bound
\[ \Pv\left( (\pi_0,  s_0,  t_1,  \pi_1,s_1, \dots, t_k, \pi_k)\in \CO(k) \text{ thinned }\big| \sigma_k \right) \le  \sum_{i=1}^k\frac{H(\CR_{t_b+\la i})}{\CL_n}=:p_{\text{th,k}},   \]
as long as $k=k(n)$ is so that the quantity on the rhs is less then $1$.
Hence, for any function $\delta_{n,k}$ so that $\delta_{n,k} p_\text{th,k}<1$, the proportion of vertices in $\CO(k)$ that are thinned - denoted by $\CO_{\text{th}}(k)$ - by Markov's inequality is at most
\be\label{eq::thinned-vertices} \Pv\left( \frac{|\CO_{\text{th}}(k)|}{|\CO(k)|} \ge  \delta_{n,k} p_{\text{th},k}   \Big| \sigma_k , \CO(k)\right) \le  \frac{\Ev[\frac{1}{\CO(k)}\sum_{\pi \in \CO(k)} \ind_{\{ \pi \text{ thinned }\}}| \sigma_k, \CO(k)]}{\delta_{n,k} p_{\text{th},k}} \le \frac{1}{\delta_{n,k}}. \ee
Now, note that  we are done with the lower bound if we can pick a $k=k(n)\to \infty$ and an $\delta_{n,k}$ so that
$\delta_{n,k}\to \infty$ and  $\delta_{n,k} p_{\text{th,k}}<1$.
 
 For this, let us temporarily believe that $\widetilde k:=\widetilde k(n)=\log \log \log n$ has the property that  $\CR_{t_b+\la \widetilde k}  =o_{\Pv}(n)$. Then, let us write $\CR_{t_b+\la \widetilde k}:= O(n/\omega_{n,\widetilde k})$ where $\omega_{n,\widetilde k}\to \infty$ with $n\to \infty$.
Set \be\label{eq::kdef} k:=k(n)=\min\{ \log\log\log n, (\omega_{n,\widetilde k})^{\frac{\tau-2}{2(\tau-1)}}\}.\ee
 Clearly, $k\le \widetilde k$ holds, hence, by monotonicity we have $\CR_{t_b+\la k}\le \CR_{t_b+\la \widetilde k}.$ Applying Claim \ref{claim::Sbound} on each term in the sum,
\[ p_{\text{th},k}=\sum_{i=1}^k\frac{H(\CR_{t_b+\la i})}{\CL_n} \le \frac{k\, n}{\CL_n (\omega_{n, \widetilde k})^{(\tau-2)/(\tau-1)} }.  \]
On the event $\{\CL_n \in (1/2 \Ev[D] n, 2 \Ev[D] n)\}$, using \eqref{eq::kdef},  
\[ p_{\text{th},k} \le  \frac{2}{\Ev[D]} \Big(\frac{1}{\omega_{n,\widetilde k}}\Big)^{\frac{\tau-2}{2(\tau-1)}}.\]
This allows us to pick $\delta_{n,k}:=(\omega_{n, \widetilde k})^\frac{\tau-2}{4(\tau-1)}$, and then 
$\delta_{n,k}p_{\text{th},k}\to 0$ as well as $\delta_{n,k}\to \infty$ holds with $n\to\infty$.
As a result, the rhs of \eqref{eq::thinned-vertices} tends to zero, showing that whp, only a negligible fraction of the vertices in $\CO(k)$ will be thinned.

We are left showing that with $\widetilde k= \widetilde k(n)=\log\log \log n$, we have $|\CR_{t_b+\la k} | = o_{\Pv}(n)$. One way to see this is to use \cite[Theorem 1.2]{HHZ07}  about typical distances: typical distances in the graph are $2 \log\log n / |\log (\tau-2)|$ with bounded fluctuations around this value, while $t_b+\la \widetilde k < (1+\ve)2\la/(\la+1) \log\log n/ |\log (\tau-2)|.$ Hence, the number of vertices at most $t_b+\la \widetilde k$ away from the uniformly chosen red source vertex must be $o(n)$.

To keep the paper self-contained, we provide another proof of this fact here.
For this, note that $t_b+\la \widetilde k$, $t_b$ defined in \eqref{def::t_b} is at most $\frac{(1+\ve)\la}{\la+1} \frac{2\log\log n}{|\log(\tau-2)|}$ for some $\ve>0$ whp. To estimate the expected size of the red cluster, we write
\[ \ba  \Ev\Big[|\CR_{t_b+\la \widetilde k}|\Big] &\le 
n\, \Pv\left( \mathcal D(\CR_0,v) \le t_b+\la k  \right)\\
&\le n\, \Pv\left( \mathcal D(\CR_0,v)/2 \le  \frac{(1+\ve)\la}{\la+1} \frac{\log\log n}{|\log(\tau-2)|} \right) \ea\]
Now, using that $\mathcal D(\CR_0,v)/2$ has the distribution ${t_b\big|}_{\la =1}$, we can continue the bound as
\[\ba \Ev\left[|\CR_{t_b+\la \widetilde k}|/n\right] &\le \Pv\left({t_b\big|}_{\la =1} \le \frac{(1+\ve)\la}{\la+1} \frac{\log\log n}{|\log(\tau-2)|} \right)\\
& \le\Pv\left(\log (\Yrn\Ybn) \ge \frac{1-2\ve\la}{\la+1} \log\log n\right).\ea\]
 In the last line, we used \eqref{def::t_b} with $\la=1$ and put bounded terms there in the $(1-2\ve \la)$ factor on the rhs inside the probability sign. Further, note that the random variables $\Yrn\Ybn$ have asymptotically exponential tails. Hence, the probability is tending to zero as $n\to \infty$.
 This ensures that most vertices are further away from the source of the red infection than $t_b+\la k$ and hence $\CR_{t_b+\la \widetilde k} =o(n)$. This finishes the proof of the lemma.
\end{proof}

\begin{proof}[Proof of the lower bound in Theorem \ref{thm::main}]
First, note that the time $T_r$ for red to reach the top of the mountain was a lower bound, i.e., we have shown the existence of a path that reaches the top in time $T_r$ whp in Lemma \ref{lem::gamma_i_connectivity}. Clearly, if red reaches the top earlier, then there is less time for blue to increase its degree, hence, it will occupy fewer vertices. Fortunately, an adaptation of Lemma \ref{lem::badpaths} for red instead of blue shows that this cannot happen. That is, one can define the sequence $\widehat u_i^{\sss{(r)}}$ by the recursion
\[ u_0^{\sss{(r)}}:= (n^{\varrho''} C\log n)^{1/(\tau-2)}, \qquad u_i^{\sss{(r)}}:=(u_0^{\sss{(r)}} C\log n)^{1/(\tau-2)}, \]
and then exchange every superscript $(b)$ to $(r)$ in the definition of $\mathcal B{ad} \mathcal P_k$ (see right before Lemma \ref{lem::badpaths}).  Applying Lemma \ref{lem::badpaths} yields that with high probability, red cannot jump a layer ahead, and hence the time to reach the top remains as defined in $T_r$.

Next, everything from this point on was a concentrated estimate, hence, we only need to check what happens in the last phase, how many vertices blue can actually get from its optional cluster.

Using Lemma \ref{lem::red-blue-intersection}, we see that the log-size of the blue cluster at time $t_b+\la k$ is whp
$\log \ops(k)= \log \ops_{\max}(k)(1+o_{\Pv}(1)).$ Note also that by Corollary \ref{cor::chebisev}, $\log \ops_{\max}(k)$ is concentrated and is equal to $\log \MBN + \sum_{j=1}^k \log \nu_j+o_{\Pv}(1)$ by Lemma \ref{lem::verticeswithmaxdegree}.
Hence, it only remains to give a lower bound on $\sum_{j=1}^k \log\nu_j$.
For this, note that the lower bound on $\nu_j$ is the same as the upper bound, with a factor $C_1$ replaced by $c_1$. This factor becomes an additive term when taking the logarithm, and hence contributing only inside the $o_\Pv(1)$ factor. Hence,
\[ \frac{\log \ops_{\max}(k) }{
(\log n)^{\frac{2}{\la+1}} \left(  C_n^{\text{half-edge}}(\Yrn, \Ybn) + C^{\text{paths}}_n( \Yrn, \Ybn)\right)} \ge \left( \frac{\Ybnla}{\Yrn}\right)^{\frac{1}{\la+1}}+o_{\Pv}(1). \]
The right hand side converges to $(Y_b^\la/Y_r)^{1/(\la+1)}$.
Combining this with the upper bound completes the proof of the Theorem \ref{thm::main}.
\end{proof}
\section{Acknowledgement}
The work of EB, RvdH and JK was supported in part by the Netherlands Organisation for Scientific Research (NWO) through VICI grant 639.033.806.

The work of JK was supported in part by NWO through the STAR cluster, and the work of RvdH was supported in part by NWO through Gravitation grant 024.002.003. JK thanks the Probability Group at The University of British Columbia for their hospitality while completing the project.

\appendix
\section{Path counting methods for restricted paths}\label{app::paths}
\begin{proof}[Proof of Lemma \ref{lem::badpaths}]
Here we follow the notation of \cite[Section 10.4.2]{H10} as much as we can.
 We will use union bound and Markov's inequality to bound the probability of the existence of bad paths:
 \be\label{eq::badpathunion} \Pv(\exists k\le k_0, \CB ad\CP_k \neq \varnothing ) \le \sum_{k=1}^{k_0} \Ev[|\CB ad\CP_i|].\ee
First we give an upper bound on the expected number of bad paths conditioned on the degree sequence, so let us fix the degrees first and write $d_v$ for the degree of the vertex $v$.
A (directed) path of length $k$ from vertex $a$ to some vertex $\pi_k$ can be described as
\be\label{def::path} \left\{(\pi_0,s_0), (\pi_1, t_1,s_1), \dots, (\pi_{k-1}, t_{k-1}, s_{k-1}), (\pi_k, t_k)\right\},\ee
where $\pi_i\in [n]$ is the $i$-th mid-vertex along the path, $s_i\in [d_{\pi_i}]$ denotes the label of the outgoing and $t_i\in [d_{\pi_i}]$ the label of the incoming half-edge of $\pi_i$.
Recall that we call a path \emph{good} if $\deg(\pi_i)\le \widehat u_i^{\sss{(b)}}$ for all $0\le i \le k$, and $\CB ad\CP_k$ is a subset of bad paths with $\pi_0 \in Z_{[t(n^{\vr'})/\la]}$,  $d_{\pi_i}\le \widehat u_{i}^{\sss{(b)}}$ for all $0\le i \le k-1$ but $d_{\pi_k}> \widehat u_k^{\sss{(b)}}$.

Since the number of half-edges out of vertex $\pi_i$ is $d_{\pi_i}$, there are many possible paths via the vertices $(\pi_i)_{i=0}^k$. Thus, the expected number of paths through fixed vertices $\pi_0, \dots, \pi_k$ equals the probability that a given path in \eqref{def::path} is present in $\CMD$ multiplied by the combinatorial factor of picking the possible half-edges for the paths, i.e.,
\be\label{eq::expectedpath1} \prod_{i=1}^k\frac{1}{\CL_n^\ast-2i+1}\cdot d_{\pi_0} \left(\prod_{i=1}^{k-1}d_{\pi_i} (d_{\pi_i}-1)\right) d_{\pi_k},\ee
where $\CL_n^\ast$ is the number of free half-edges when the procedure starts.
Thus, the expected number of all self-avoiding \emph{bad paths in $\CB ad\CP_k$} equals
\be\label{eq::badpath1} \Ev_n[|\CB ad\CP_k|]=\!\!\!\!\!\!\sum_{\pi_0 \in Z_{[t(n^{\vr'})/\la]} }\!\!\!\!\!\!\!d_{\pi_0}\!\!  \left(\prod_{i=1}^{k}\frac{\CL_n^\ast}{\CL_n^\ast-2i+1}\right)\cdot\sideset{}{^\ast}\sum_{\substack{ \pi_1,\dots,\pi_{k-1} \\ \forall i\ \pi_i \le \widehat u_i^{\sss{(b)}}}} \left(\prod_{i=1}^{k-1}\frac{d_{\pi_i} (d_{\pi_i}-1)}{\CL_n^\ast}\right) \Bigg(\sum_{{\substack{ \pi_k \\ d_{\pi_k} \ge \widehat u_k^{\sss{(b)}}}}}\frac{d_{\pi_k}}{\CL_n^\ast}\Bigg),\ee
where $\sideset{}{^\ast}\sum$ means that we sum over distinct vertices.
Allowing non-distinct vertices, we get the upper bound
\be\label{eq::badpath2}\Ev_n[|\CB ad\CP_k|]\le \!\!\!\!\!\sum_{\pi_0 \in Z_{[t(n^{\vr'})/\la]} }\!\!\!\!d_{\pi_0}  e^{\frac{k^2}{\CL_n^\ast}} \prod_{i=1}^{k-1}\Bigg(\sum_{d_{\pi_i}\le \widehat u_i^{\sss{(b)}}}  \frac{d_{\pi_i} (d_{\pi_i}-1)}{\CL_n^\ast}\Bigg) \Bigg(\sum_{{\substack{ \pi_k \\ d_{\pi_k} \ge \widehat u_k^{\sss{(b)}}}}}\frac{d_{\pi_k}}{\CL_n^\ast}\Bigg),\ee
where the factor $e^{\frac{k^2}{\CL_n^\ast}}$ is a bound on the term $\prod_{i=1}^{k}\CL_n^\ast/(\CL_n^\ast-2i+1)$ above.
 Since the path counting starts at time $t(n^{\vr'})=O^\diamond(1)$, and typical distances are $O^\diamond(2)$ in the graph, we have $\CL_n^\ast=\CL_n(1+o(1))$ whp (see the proof of Lemma \ref{lem::red-blue-intersection} for more details). By the Law of Large Numbers, $\CL_n/n\to \Ev[D]$, hence the $i$-th factor on the right hand side is close to
 \be\label{eq::llln-down} \frac{1}{\Ev[D]}\Ev\left[D(D-1)\indicator{D\le\widehat u_i^{\sss{(b)}} }\right]\le C_3 (\widehat u_i^{\sss{(b)}})^{3-\tau}, \ee
 while the last factor in \eqref{eq::badpath2} is close to
 \[ \frac{1}{\Ev[D]} \Ev\left[D \indicator{D\ge\widehat u_k^{\sss{(b)}} }\right] \le C_3 (\widehat u_k^{\sss{(b)}})^{2-\tau}\]
 by the tail behavior \eqref{eq::F} of the distribution function of $D$, for some constant $C_3$.

Since we need to set $k_0=O(\log\log n)$, the error term $\exp\{\frac{k^2}{\CL_n^\ast}\}=1+o(1)$ in \eqref{eq::badpath2} stays close to $1$. Since $\pi_0$ is a vertex that belongs to the last generation of the branching process approximation phase, we get an upper bound on the total number of bad paths by contracting all the vertices that belong to the last generation of the blue branching process. Note that by the coupling to the BP, the degrees in this generation are i.i.d. from distribution $F^\ast$ in \eqref{def::size-biased1}, hence for some large constants $C', C_2>0$ whp
\[  \sum_{\pi_0\in Z_{[t(n^{\vr'})/\la]}} d_{\pi_0}\le  (C' \log n Z_{[t(n^{\vr'})/\la]})^{1/(\tau-2)}\le C_2\widehat u_0^{\sss{(b)}},\]
by the definition of $\widehat u_0^{\sss{(b)}}$ in \eqref{eq::uibar}.
Further note that, with $m:=Z_{[t(n^{\vr'})/\la]}$ and the definition of $\widehat u_0^{\sss{(b)}}$ again, by Lemma \ref{lem::maxdegree},
\be\label{eq::dpi}  \Pv(\exists \pi_0\in Z_{[t(n^{\vr'})/\la]}, d_{\pi_0} > \widehat u_0^{\sss{(b)}} ) \le \frac{1}{m^{c_1C}}.  \ee 
Then with this error probability we can write
\be\label{eq::badpath3} \Ev_n[|\CB ad\CP_k|] \le  C_3 \widehat u_0^{\sss{(b)}}  (\widehat u_k^{\sss{(b)}})^{2-\tau} \prod_{i=1}^{k-1} \big(C_3(\widehat u_i^{\sss{(b)}})^{3-\tau} \big). \ee
The recursion for $\widehat u_i^{\sss{(b)}}$ in \eqref{eq::uibar} gives
\[\widehat u_i^{\sss{(b)}}= (\widehat u_0^{\sss{(b)}} )^{ (\tau-2)^{-i} } ( C\log n)^{ ( (\tau-2)^{-i}-1 ) / (3-\tau) }, \]
and then in \eqref{eq::badpath3}, after elementary calculation, the powers of $(\tau-2)^{-1}$ cancel in the exponent of $\widehat u_0^{\sss{(b)}}$ and $C \log n$, and the formula simplifies to
\[ \Ev_n[|\CB ad\CP_k|] \le (C\log n)^{-k}. \]
This estimate and \eqref{eq::dpi} together implies that the union bound in \eqref{eq::badpathunion} leads to\[ \Pv(\exists k\le k_0, \CB ad\CP_k \neq \varnothing ) \le \sum_{k=1}^\infty (C\log n)^{-k} + \Pv(\exists \pi_0\in Z_{[t(n^{\vr'})/\la]}, d_{\pi_0} > \widehat u_0^{\sss{(b)}} ) \le \frac{2}{ C\log n}.\]
This completes the proof of the upper bound in \eqref{eq::wihZk_expected}.
\end{proof}
\begin{proof}[Proof of Lemma \ref{lem::wihZk}]

Similarly as in the proof of Lemma \ref{lem::badpaths}, first we give upper and lower bound on the expected number of $k$-length paths starting from half-edges in the set $\CMBN$ conditioned on the degree sequence. Then, we relate the number of paths to the total number of vertices found in $\ops_{\max}(k)$ or $\ops_{\max}^{\text{d}}(k)$.

 Let us fix the degrees first and write $d_v$ for the degree of the vertex $v$. Let us also introduce the nested sets $\Lambda_1 \supset \Lambda_2 \supset \dots$-s by
\[ \Lambda_j=\left\{v\in [n]: d_v \le \wit u_{\lfloor t_b+\la j-T_r\rfloor}\right\}.\]
Similarly as in the previous lemma, a (directed) path of length $k$ from vertex $a=\pi_0$ to $b=\pi_k$ can be described as
\be\label{def::path2} \left\{ (\pi_0,s_0), (\pi_1, t_1,s_1), \dots, (\pi_{k-1}, t_{k-1}, s_{k-1}), (\pi_k, t_k)\right\}.\ee
We call a path now \emph{good} if $\pi_i\in \Lambda_i$ and \emph{good-directed} if $\pi_i\in \Lambda_i\setminus \Lambda_{i+1}$ for all $1\le i \le k$.

We write $N_k(a,b)$, $N_k^{\text{d}}(a,b)$ for the number of self-avoiding good paths and good-directed paths going from vertex $a$ to $b$, respectively, and $\CL_n^\ast$ for the total number of half-edges present in $\CMD$ at time $t_b$. Similarly as in \eqref{eq::badpath1}, the expected value of all self-avoiding \emph{good paths} equals
\be\label{eq::nkab} \Ev_n[N_k(a,b)]=\frac{d_a d_b}{\CL_n^\ast - 2k+1} \left(\prod_{i=1}^{k-1}\frac{\CL_n}{\CL_n^\ast-2i+1}\right)\cdot\sideset{}{^\ast}\sum_{\substack{ \pi_1,\dots,\pi_{k-1} \\ \forall i\ \pi_i \in \Lambda_i}} \left(\prod_{i=1}^{k-1}\frac{d_{\pi_i} (d_{\pi_i}-1)}{\CL_n}\right)\ee
where $\sideset{}{^\ast}\sum$ means that we sum over distinct vertices.
Now clearly we have the upper bound
\be\label{eq::nkab_upper1}\Ev_n[N_k(a,b)]\le d_a \frac{d_b}{\CL_n} \left(\frac{\CL_n}{\CL_n^\ast}\right)^{k}\!\!\! \cdot \e^{\frac{k^2}{\CL_n^\ast}}\cdot \prod_{i=1}^{k-1}\left(\sum_{\pi_i\in \Lambda_i} \frac{d_{\pi_i} (d_{\pi_i}-1)}{\CL_n}\right):=\overline{n}_k(a,b).\ee
Note that $\CL_n/\CL_n^\ast \to 1$ by the argument in Lemma \ref{lem::red-blue-intersection}.
The Law of Large Numbers ensures the convergence on the right hand side, so
\be\label{eq::nkab-conv}\Ev_n[N_k(a,b)] \le d_a \frac{d_b}{\CL_n} \Bigg(\prod_{i=1}^{k-1} \nu_i\Bigg) \cdot \e^{\frac{k^2}{\CL_n^\ast}}=:\overline{\Ev[N_k(a,b)]}\ee
 where $\nu_i$ is from \eqref{def::nuj}.
  By contracting all the vertices belonging to the set $\CMBN$, we have $d_a=\MBN$ and by letting $b$ be the contraction of all the vertices with degree less than $K$ for some arbitrary constant $K\ge 2$, we have $d_b/\CL_n^\ast\le 1$ is of constant order again.
  Note that the total number of explored vertices on paths of length $k$ is bounded from below and from above by
  \be\label{eq::obound} N_k(a,b)\le \ops_{\max}(k) \le \sum_{i=1}^k N_i(a,b).\ee
Noting that $\nu_i$ grows super-exponentially, $\sum_{i=1}^k\overline{\Ev[N_i(a,b)]}= \overline{\Ev[N_k(a,b)]} (1+o(1))$. This finishes the proof of the upper bound in \eqref{eq::wihZk_expected}.

We can get a lower bound on \eqref{eq::nkab} if in the sum over distinct vertices, we leave out the $i$ highest degree vertices $V_i:=\{v_{i,1}^{\max}, \dots, v_{i,i}^{\max}\}$ from each set $\Lambda_i$. That is, we have
\be\label{eq::nkab_under} \Ev_n[N_k(a,b)] \ge d_a \frac{d_b}{\CL_n} \prod_{i=1}^{k-1}\left(\sum_{\substack{\pi_i\in \Lambda_i\\ \pi_i\notin V_i}} \frac{d_{\pi_i} (d_{\pi_i}-1)}{\CL_n}\right):=\underline{n}_{k}(a,b). \ee
 Note that since we leave out only finitely many vertices, the $i$th sum within the product  still converges to $\nu_i$. Again contracting all the vertices belonging to the set $\CMBN$, we have $d_a=\MBN$ and by letting $b$ be the contraction of all the vertices with degree less than $K$ for some arbitrary constant $2\le K$, we have $d_b/\CL_n$ is of constant order again. Combining with the lower bound in \eqref{eq::obound} finishes the proof of the lower bound in \eqref{eq::wihZk_expected}.

The proof of the bounds \eqref{eq::wihZk_expected2} for \emph{good-directed} paths are analogous, but now one has to use the restricted sets
\[ \Lambda_j^{\text{d}}:= \Lambda_j\setminus \Lambda_{j+1}=\left\{v\in [n]:  \wit u_{\lfloor t_b+\la (j+1)-T_r\rfloor} < d_v \le \wit u_{\lfloor t_b+\la j-T_r\rfloor}\right\}.\]
Next we prove the variance formula for $\ops_{\max}^{\text{d}}(k)$ following more or less the lines of \cite[Section 10.4.2 and 9.4]{H10}. Note that the major difference between the proof of \cite[Proposition 9.17]{H10} and our case is that here we have the extra restriction $\pi_i, \rho_i \in \Lambda_i^{\text{d}}$, and the $\Lambda_i^{\text{d}}$ sets are disjoint.

First write  $N_k^\text{{d}}(a,b)$ as the sum of indicators that a given good-directed path is present, and write $|\pi\cap\rho|$ for the number of edges the two paths share. Then we have the variance formula
\[\Vv[N_k^\text{{d}}(a,b)]= \sum_{\ell=0}^{k}\sum_{\substack{ \pi, \rho \\ |\pi\cap\rho|=\ell}} \left[\Pv(\pi, \rho \subseteq \CMD )- \Pv(\pi\subseteq \CMD) \Pv(\rho\subseteq \CMD)\right].\]
Consider first the inner sum for $\ell=0$, i.e.\ when the two path have disjoint edge-sets.
Since at the time of pairing the $i$th half-edge, there are $\CL_n^\ast-2i+1$ free half-edges to pick from, the probability that both $\pi,\rho$ are present is exactly $\prod_{i=1}^{2k} (\CL_n^\ast - 2i +1)^{-1}$. On the other hand,  the square of the probability that a path present is $\Pv(\pi\subseteq \CMD)^2=\prod_{i=1}^{k} (\CL_n^\ast - 2i +1)^{-2}$. Comparing the two,  we get  for $\ell=0$, the inner sum is
\be\label{eq::l=0contribution}\ba &\sum_{\substack{\pi, \rho\\ |\pi\cap \rho|=0}}[\Pv(\pi, \rho \subseteq \CMD )-\Pv(\pi\subseteq \CMD) \Pv(\rho\subseteq \CMD)] \\
&= \sum_{\substack{\pi, \rho\\ |\pi\cap \rho|=0}} \left(\prod_{i=1}^{k} \frac{\CL_n^\ast-2i+1}{\CL_n^\ast-2i-2k+1}-1\right) \Pv(\pi\subseteq \CMD) \Pv(\rho\subseteq \CMD),\ea\ee
which, combined with the fact that $\CL_n^\ast=\CL_n(1+o(1))$, gives rise to the first term in the error term $e_{k,n}$ in \eqref{def::ekn}.

For $\ell=k$, that is, the two paths are identical,  $\Pv(\pi \subseteq \CMD)-\Pv(\pi \subseteq \CMD)^2 \le \Pv(\pi \subseteq \CMD)$, hence the inner sum can be bounded by the inequality
\be\label{eq::l=kcontribution} \sum_{\substack{\pi, \rho\\ |\pi\cap \rho|=k}} \Pv(\pi \subseteq \CMD) \le \Ev[N_k^{\text{d}}(a,b)], \ee
explaining the first term on the right hand side of \eqref{eq::wihZk_variance}.

Now we are left with handling the cases $1\le \ell \le k-1$. Note that in these cases we have to evaluate over all possible overlaps between the paths $\pi,\rho$. For this, note that the restriction that $\pi_i, \rho_i \in \Lambda_i^{\text{d}}$ and $\Lambda_i^{\text{d}}$ are disjoint sets implies that for each $i$ there are only two cases: either $\pi_i=\rho_i$ or $\pi_i \neq \rho_i$, but in both cases they are disjoint from all $\pi_j, \rho_j, j \neq i$. We will merge these cases into shapes. Let us call an \emph{excursion} of length $s$ a connected component of $\pi\setminus \rho$, that is, a consecutive sequence of edges where the two paths are not the same. Formally, for some $i$,  $(\pi_i, \pi_{i+1}) \neq (\rho_i, \rho_{i+1}), \dots, (\pi_{i+s-1}, \pi_{i+s}) \neq (\rho_{i+s-1}, \rho_{i+s})$, is an excursion if it is started and ended by the common edges $(\pi_{i-1}, \pi_{i}) =(\rho_{i-1}, \rho_{i})$ and $ (\pi_{i+s}, \pi_{i+s+1})= (\rho_{i+s}, \rho_{i+s+1})$ unless $i-1=-1$ or $i+s+1=k+1$, in which cases there is no edge before/after the excursion, respectively. Due to the property that $\Lambda_i^{\text{d}}$ are disjoint, note that there are exactly the same number of edges on the $\pi$ part of an excursion as on the $\rho$ part of the excursion.

Let us denote by $m$ the number of excursions, and again, we denote by $\ell:=|\pi\cap \rho|=\ell$ the total number of shared edges. For a fix $m$, there can be  $m-1, m$ or $m+1$ many segments of $\pi\cap \rho$, depending on whether none of, only one of, or both $a,b$ are part of an excursion. Let us thus introduce the indicators $\delta_a=1, \delta_b=1$ if vertex $a,b$ are parts of an excursion.

We can now define the class of shapes called
$\mathrm{Shape}_{m,\ell}$ corresponding to pairs of paths for which  $|\pi\cap \rho|=\ell$ and $\pi\setminus\rho$ consists of $m$ excursions.
That is, $\rho$ has $m$ edge-disjoint excursions from $\pi$, and between two consecutive excursions there is at least one edge in $\pi\cap \rho$. Note that the number of excursions $m$ is thus at most $\ell+1$.
Also note that each shape in $\mathrm{Shape}_{m,\ell}$ can be uniquely characterised by a sequence of numbers of the form
\be\label{eq::shape} ( \delta_a, \delta_b, (\ell_1, \ell_2, \dots, \ell_{m+1-\delta_a-\delta_b}), (e_1, e_2 \dots, e_m) ), \ee
where $\delta_a, \delta_b$ are indicators as before; $\ell_i$ is the number of edges in the $i$th connected component of $\pi\cap \rho$, with the property that $\sum_{i=1}^{m+1-\delta_a-\delta_b}\ell_i=\ell$; and $e_i$ is the number of edges on the $i$th excursion with the property that $\sum_{i=1}^m e_i=k-\ell$. See Figure for examples.

\begin{figure}
\centering
\subfigure{\label{fig::path1}
\includegraphics[keepaspectratio,width=10cm]{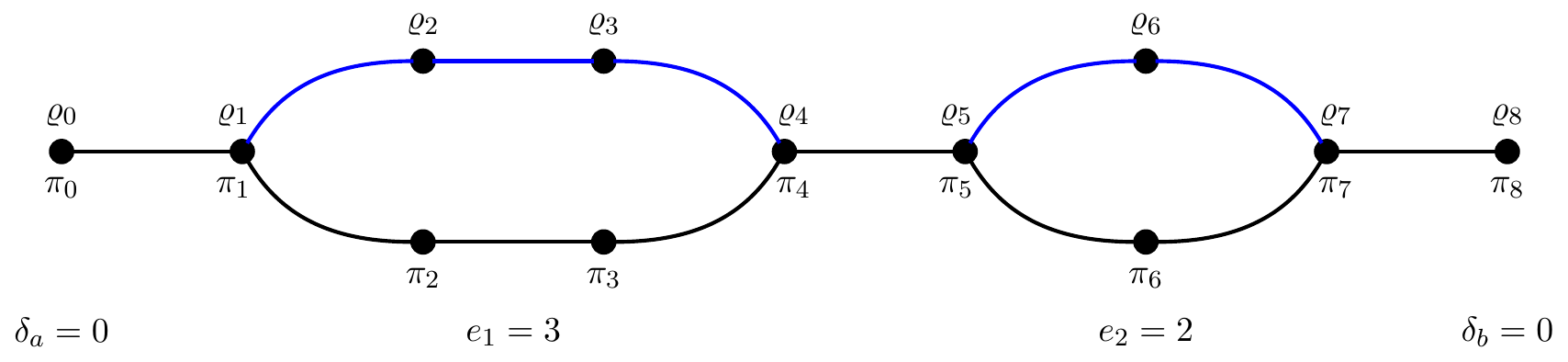}}
\subfigure{\label{fig::path2}
\includegraphics[keepaspectratio,width=10cm]{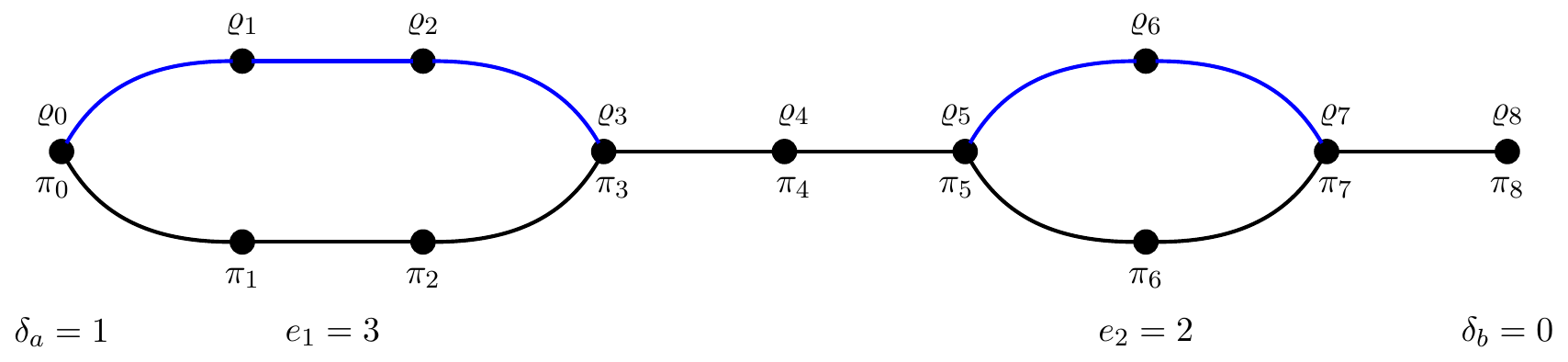}}
\subfigure{\label{fig::path3}
\includegraphics[keepaspectratio,width=10cm]{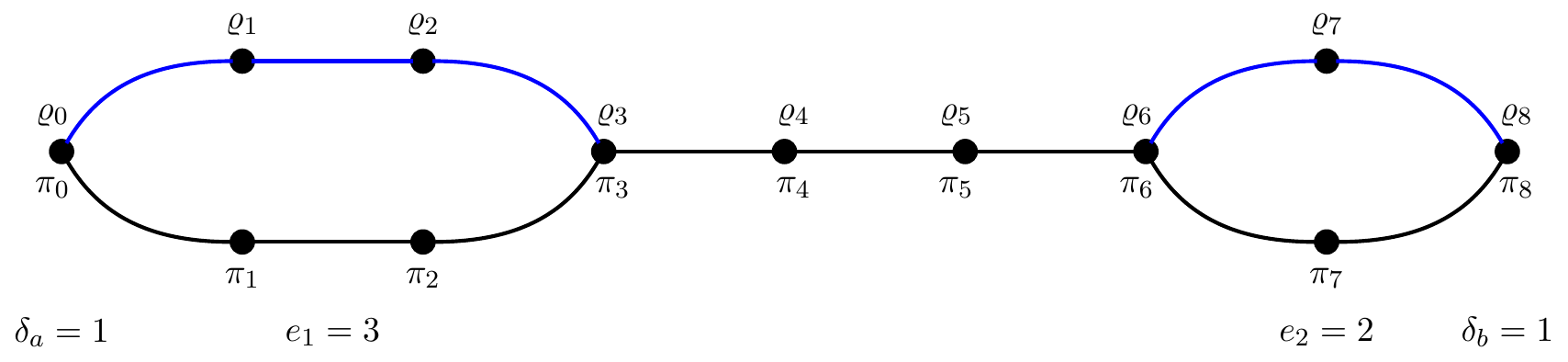}}
\caption{Paths of length $8$ belonging to $\mathrm{Shape}_{2,3}$: $m=2$ indicates that there are two excursions, $\ell=3$ means that the two paths share $3$ edges in total. On the first picture, the excursion do not start at the ends of the path, hence $\delta_a=\delta_b=0$, on the second picture, $\delta_a=1, \delta_b=0$, while on the third picture both excursions start at the ends, hence $\delta_a=\delta_b=1$. Note that in all cases, the number of degree three vertices is $2m-\delta_a-\delta_b$, and the shared edges form $m+1-\delta_a-\delta_b$ many connected components.   }\label{fig::paths}

\end{figure}

The contribution of paths with  $1\le \ell \le k-1$  to the  variance $\Vv[N_k^{\text{d}}(a,b)]$  can be bounded from above as follows:
\be\label{eq::variancecontribution} \sum_{\substack{\pi,\rho\\ 1\le|\pi\cap\rho|\le k-2}} \Pv(\pi,\rho \subseteq \CMD) \le \sum_{\ell=1}^{k-1} \sum_{m=1}^{\ell+1} \sum_{\sigma\in \mathrm{Shape}_{m,\ell}} \sum_{\substack{\pi,\rho\\ \mathrm{Shape}(\pi,\rho)=\sigma}} \Pv(\pi, \rho \subseteq \CMD).\ee
Note that if $|\pi\cap\rho|=\ell$ is fixed, then there are exactly $2k-\ell$ different edges in $\pi\cup \rho$, so that with fixed vertices and fixed half-edges,
\be\label{eq::stubs} \Pv(\pi, \rho\subseteq \CMD) = \prod_{i=1}^{2k-\ell} \frac{1}{\CL_n^\ast-2i+1}.\ee
If we now  fix only the vertices, but not the half-edges, then we have to multiply this with a combinatorial factor similar to that in \eqref{eq::expectedpath1} counting the number of possible variations of half-edges for fixed vertices $(\pi_i, \rho_i)_{1\le i\le  k}$.
Recall again that $\de_a=\ind\{a \in \mbox{ first excursion of } \pi\setminus \rho\}$ and $\de_b=\ind\{b \in \mbox{ last excursion of } \pi \setminus \rho \}$.
Let us write $d_{\sigma}(v)$ for the number of half-edges of $v$ used in the union of paths $\pi, \rho$ of shape $\sigma$, and in text we write degree$_\sigma$ for this degree. At the end of every excursion we have degree$_\sigma$-$3$ vertices, while on the excursions and inside segments of $\pi\cap \rho$ we have degree$_\sigma$-$2$ vertices.
Thus the combinatorial factor to pick half-edges, once fixing the vertices along the path (but not the half-edges) is at most
\be\label{eq::combi_factor}d_a(d_a-1)^{\de_a} d_b(d_b-1)^{\de_b}\!\!\!\!\!\! \prod_{\substack{s:\\d_{\sigma}(\pi_s)=3}}\!\!\!\!\!\! d_{\pi_{i_s}}(d_{\pi_{i_s}}-1)(d_{\pi_{i_s}}-2)\!\!\!\!\!\!\prod_{\substack{t:\\d_{\sigma}(\pi_t)=2}}\!\!\!\!\!\! d_{v_t}(d_{v_t}-1)\!\!\!\!\!\!\prod_{\substack{u:\\d_{\sigma}(\rho_u)=2\\ \rho_u\cap \pi=\varnothing}}\!\!\!\!\!\!  d_{v_u}(d_{v_u}-1). \ee

 Thus, establishing the contribution of a given fixed shape $\sigma$ in \eqref{eq::variancecontribution}, we should sum the product of \eqref{eq::stubs} and \eqref{eq::combi_factor} over all possible configurations $(\pi_i, \rho_i)_{i=1,\dots,k}$ forming the shape $\sigma$.
Note that the number of factors in \eqref{eq::stubs} equals the number of edges while the number of factors in \eqref{eq::combi_factor} equals the number of vertices in $\sigma$. Since the number of edges minus the number of vertices without counting $a$ and $b$ is at least $m + 1$ (equality if all the vertices on excursions are different -- see Remark \ref{rem::4degree} below when this is not true), we get
\be\label{eq::shape_pirho1} \ba &\sum_{\substack{\pi,\rho\\ \mathrm{Shape}(\pi,\rho)=\sigma}} \Pv(\pi, \rho \subseteq \CMD) \le \frac{1}{\CL_n^{m+1}}\prod_{i=1}^{2k-\ell} \frac{\CL_n}{\CL_n^\ast-2i+1} \cdot d_a(d_a-1)^{\de_a} d_b(d_b-1)^{\de_b} \\
&\times\!\!\!\!\!\!\prod_{\substack{s:\\ d_{\sigma}(\pi_s)=3}}\!\!\!\Bigg(\sideset{}{^\ast}\sum_{i \in \Lambda_{i_s}^{\text{d}}} \frac{d_i (d_i-1)(d_i-2)}{\CL_n} \Bigg)
\!\!\!\!\prod_{\substack{t:\\ d_{\sigma}(\pi_t)=2}}\!\!\!\Bigg(  \sideset{}{^\ast}\sum_{i \in \Lambda^{\text{d}}_{i_t}} \frac{d_i (d_i-1)}{\CL_n} \Bigg)
\!\!\!\!\prod_{\substack{u:\\ d_{\sigma}(\rho_u)=2\\ \rho_u\cap \pi=\varnothing} }\!\!\!\Bigg(  \sideset{}{^\ast}\sum_{i \in \Lambda^{\text{d}}_{i_u}} \frac{d_i (d_i-1)}{\CL_n} \Bigg) \ea \ee
By ta similar argument then that in the proof of Claim \ref{claim::Sbound}, the sums in the previous display are converging to $\gamma_{i_s}-\gamma_{i_s+1}$ and $\nu_{i_t}-\nu_{i_t+1}, \nu_{i_u}-\nu_{i_u+1}$, respectively. Thus, we get that the rhs\  of \eqref{eq::shape_pirho1} is at most
\be\label{eq::immed} \frac{1}{\CL_n^{m-1}}\prod_{i=1}^{2k-\ell} \frac{\CL_n}{\CL_n^\ast-2i+1} \cdot d_a^{1+\de_a} \frac{d_b^{1+\de_b}}{\CL_n^2}
\!\!\!\! \prod_{\substack{s:\\d_{\sigma}(\pi_s)=3}}\!\!\!\! \gamma_{i_s} \!\!\!\! \prod_{\substack{t:\\d_{\sigma}(\pi_t)=2}}\!\!\!\!\nu_{i_t}\!\!\!\!
\prod_{\substack{u:\\d_{\sigma}(\rho_u)=2 \\ \rho_u\cap \pi=\varnothing}}\!\!\!\!\nu_{i_u}.\ee
\begin{remark}\label{rem::4degree}\normalfont
 Note that even though the \emph{edges} of an excursion are different by definition, the \emph{vertices} in the excursions might still coincide, i.e. $\pi_i=\rho_i$ can happen even if $\pi_i, \rho_i$ are sitting in the middle of an excursion. In this case, instead of having the factor $d_{\pi_i}(d_{\pi_i}-1) d_{\rho_i}(d_{\rho_i}-1)$ in \eqref{eq::combi_factor} we have $d_{\pi_i}(d_{\pi_i}-1)(d_{\pi_i}-2)(d_{\pi_i}-3)$. Since in \eqref{eq::shape_pirho1}, 
 every original vertex gets a factor $\CL_n^{-1}$ as a normalisation, when $\pi_i=\rho_i$ is merged within an excursion, 
 there is a factor of the form
 \[ \sum_{\pi_i \in \Lambda_i^{\text{d}}} d_{\pi_i}(d_{\pi_i}-1)(d_{\pi_i}-2)(d_{\pi_i}-3)/ \CL_n^2 \]
 in \eqref{eq::shape_pirho1}.
 This converges by the Law of Large Numbers again, and is less than
 $C \wit u_{f(i)}^{5-\tau}/n$ where $f(i)=[t_b+\la i -T_r]$. If the vertices $\pi_i,\rho_i$ are not merged, we have a factor $\nu_i^2=\wit u_{f(i)}^{2(3-\tau)}$ in \eqref{eq::shape_pirho1} instead. Note that
 \[ \wit u_{f(i)}^{5-\tau}/n = o(\wit u_{f(i)}^{2(3-\tau)} )\] as long as $\wit u_{f(i)} = o(n^{1/(\tau-1)})$. Since the maximal degree in the graph is of this order, this is always the case. That is, it in not worth merging vertices on excursions. We can continue analysing formula \eqref{eq::immed}.
\end{remark}

Now we identify the indices $i_s,i_t,i_u$, using the restrictions $\pi_i, \rho_i\in \Lambda_i^{\text{d}}$.
The crucial observation is the following: follow the indices $\pi_1, \pi_2,\dots, \pi_{k-1}$ and $\rho_1, \rho_2,\dots, \rho_{k-1}$ along the two paths. If for some $i$, the vertices $\pi_i\neq\rho_i$ are degree$_\sigma$-$2$ vertices on an excursion, then the corresponding $\nu_i^2$ appears in the product in \eqref{eq::immed}. If $\pi_i=\rho_i$ is a degree$_\sigma$-$3$ vertex, then we have a factor $\gamma_{i}$ \emph{replacing} $\nu_i^2$ in the product. If $\pi_i=\rho_i$ is a degree$_\sigma$-$2$ vertex in $\pi\cap \rho$, then we only have a factor $\nu_{i}$ in the product (instead of $\nu_i^2$) in \eqref{eq::immed}.

Thus, dividing \eqref{eq::immed} by $\prod_{i=1}^{2k-2} \nu_i^2$ yields  that for each degree$_\sigma$-$3$ vertex we have a factor $\gamma_{i}/\nu_i^2$ and for each coinciding degree$_\sigma$-$2$ vertex we have a $1/\nu_{i}\le 1/\nu_{k-1}$ in the product.
Elementary calculation shows that
\be\label{eq::monoton} \gamma_{i}/\nu_i^2 = \left(\wit u_{\lfloor t_b+\la i-T_r\rfloor}\right)^{\tau-2}\ee
and since  the sequence $\wit u_i$ is decreasing in $i$, we get
\[\frac{\gamma_{i}}{\nu_i^2} \le \frac{\gamma_1}{\nu_1^2}.  \]
Thus, we can bound the contribution of every degree$_\sigma$-$3$ vertex by a factor $\gamma_1/\nu_1^2$, and every coinciding degree$_\sigma$-$2$ vertex by a factor $1/\nu_{k-1}.$ Since there are $m-\delta_a-\delta_b$ many degree$_\sigma$-$3$ vertices and $\sum_{j=1}^{m+1-\de_a-\de_b}(\ell_i-1) =\ell-m-1+\delta_a+\delta_b$ many coinciding degree$_\sigma$-$2$ vertices,
if we take out $\overline{\Ev[N_k(a,b)]}^2$ as in \eqref{eq::nkab-conv} from \eqref{eq::immed},
 we are left with the following upper bound
\be\label{eq::sumup} \ba &\sum_{\substack{\pi,\rho\\ \mathrm{Shape}(\pi,\rho)=\sigma}} \Pv(\pi, \rho \subseteq \CMD)
\le \overline{\Ev[N_k(a,b)]}^2 \frac{1}{d_a^{1-\de_a} d_b^{1-\de_b}} \left(\frac{\gamma_1}{\nu_1^2} \right)^{2m-\de_a-\de_b}\!\!\!\!\!\!\! \frac{1}{\nu_{k-1}^{\ell-m-1+\de_a+\de_b}} \frac{1}{\CL_n^{m-1}}\\
 &\quad\quad \quad\le\overline{\Ev[N_k(a,b)]}^2 \cdot \frac{1}{\CL_n^{m-1}} \left(\frac{\gamma_1^{2}\nu_{k-1}}{\nu_1^{4}}\right)^{m-1} \frac{1}{\nu_{k-1}^\ell} \left(\frac{\gamma_1 \nu_{k-1}}{\nu_{1}^2 d_a}\right)^{1-\de_a}  \left(\frac{\gamma_1 \nu_{k-1}}{\nu_{1}^2d_b}\right)^{1-\de_b}. \ea\ee
Recall that we have to sum this formula over all the shapes in $\mathrm{Shape}_{m,\ell}$, and over $m\ge 1$ and $\ell$ as in \eqref{eq::variancecontribution}.

When $m=1$, the number of $\sigma\in \mathrm{Shape}_{1,\ell}$ is $\ell-1$ if $\de_a+\de_b=0$, and $1$ if $\de_a+\de_b=1$, and $0$ if $\de_a=\de_b=1$.
Thus, fixing $m=1$ and summing in $\delta_a, \delta_b, \ell$ gives at most
\[   \left(\frac{\gamma_1 \nu_{k-1}}{\nu_{1}^2 d_a} +  \frac{\gamma_1 \nu_{k-1}}{\nu_{1}^2 d_b}\right) \cdot \sum_{\ell=1}^\infty \frac{1}{\nu_{k-1}^\ell} +  \frac{\gamma_1^2 \nu_{k-1}^2}{\nu_{1}^4 d_a d_b} \sum_{\ell=1}^\infty \frac{\ell-1}{\nu_{k-1}^\ell}\]

Thus the contribution from $m=1$ can be bounded from above by
\be\label{eq::m=1contribution}\ba \overline{\Ev[N_k(a,b)]}^2\left[\frac{\gamma_1 \nu_{k-1}}{\nu_1^2 (\nu_{k-1}-1)} \left(\frac{1}{d_a}+\frac{1}{d_b}\right) + \frac{\gamma_1^2 \nu_{k-1}^2}{\nu_{1}^4(\nu_{k-1}-1)^2} \frac{1}{d_a d_b}\right].\ea\ee
We are left with counting the contribution of shapes with $m\ge 2$. We can bound the number of shapes in $\mathrm{Shape}_{m,l}$ (for fixed $\delta_a, \delta_b$) similarly as in \cite[Lemma 9.18]{H10}
\be\label{eq::shapebound}|\{\sigma: \sigma \in \mathrm{Shape_{m,l}}\}|\le  {{k-\ell-1} \choose {m-1}} {{\ell-1}\choose{m-\de_a-\de_b}}.\ee
To explain this formula, recall the description of the shape in \eqref{eq::shape}. Note that since excursions are separated by at least one common edge, we have to pick $\ell_1, \ell_2, \ell_{m+1-\de_a-\de_b}$ in \eqref{eq::shape} so that for all $i$, $\ell_i\ge1$ and $\sum_{i} \ell_i=\ell$. This can be done in ${{\ell-1}\choose{m-\de_a-\de_b}}$ many ways. Then, we also have to pick the length of excursions so that $\sum_{i=1}^m e_i = k-\ell$ and each $e_i\ge 1.$ This can be done in ${{k-\ell-1} \choose {m-1}}$ many ways.

For $m\ge 2$, the factors in \eqref{eq::shapebound} can be bounded by
\[ {{k-\ell-1} \choose {m-1}}\le \frac{k^{m-1}}{(m-1)!},  \quad \quad
{{\ell-1}\choose{m-\de_a-\de_b}} \le \frac{\ell^{m-\de_a-\de_b}}{(m-\de_a-\de_b)!}\le k^m,  \]
since $\ell\le k$. So, the total number of shapes in $\mathrm{Shape}_{m,\ell}$ can be bounded by
\be\label{eq::shapebound2}|\{\sigma: \sigma \in \mathrm{Shape_{m,\ell}}\}|\le k\frac{k^{2(m-1)}}{(m-1)!},\ee
which is independent of $\ell$. Using this bound in \eqref{eq::sumup} when summing over the number of shapes, summing over $\de_a, \de_b\in \{0,1\}$, then over $\ell\ge 1$,  we get that the contribution of $m\ge 2$ terms can be bounded from above by $\overline{\Ev[N_k(a,b)]}^2$ times
\be\label{eq::m>2contribution}\ba &\left(1+\frac{\gamma_1\nu_{k-1}}{\nu_1^2 d_a}\right)\left(1+\frac{\gamma_1\nu_{k-1}}{\nu_1^2 d_b}\right) \frac{k}{\nu_{k-1}-1} \sum_{m=2}^{\infty}\left( \frac{\gamma_1^2\nu_{k-1}}{\nu_1^4 \CL_n}\right)^{m-1} \frac{k^{2(m-1)}}{(m-1)!} \\
&= \left(1+\frac{\gamma_1\nu_{k-1}}{\nu_1^2 d_a}\right)\left(1+\frac{\gamma_1\nu_{k-1}}{\nu_1^2 d_b}\right) \frac{k}{\nu_{k-1}-1} \left(e^{k^2\gamma_1^2 \nu_{k-1}/(\nu_{1}^4 \CL_n)} -1\right).\ea \ee
Now let us set $d_a:=\MBN$ and in $d_b$ we collect all the vertices that are less then $\nu_k$: these contain vertices with constant degree (say all the degrees smaller than $K$ for $F(K)=1/2$). This implies that $d_b\ge \CL_n/2$ whp. Combining the contribution for $\ell=0$ in \eqref{eq::l=0contribution}, $\ell=k$ in \eqref{eq::l=kcontribution}, and then $m=1$ in \eqref{eq::m=1contribution} and finally $m\ge 2$ in \eqref{eq::m>2contribution} yields \eqref{eq::wihZk_variance}.
\end{proof}

\bibliographystyle{abbrv}
\bibliography{refscompetition}

\end{document}